\pgfplotsset{compat=1.18}
\DeclareMathOperator{\Tr}{Tr}
 \providecommand{\norm}[1]{\lVert#1\rVert}
\newcommand{\rup}{^\rho}
\newcommand{\mud}{_{\mu}}
\newcommand{\nud}{_{\nu}}
\newcommand{\munud}{_{\mu\nu}}
\newcommand{\nn}{\nonumber}
\newcommand{\pt}{\partial}
\newcommand{\be}{\begin{equation}}
	\newcommand{\ee}{\end{equation}}
\newcommand{\ba}{\begin{eqnarray}}
	\newcommand{\ea}{\end{eqnarray}}
\newcommand{\baa}{\begin{array}}
	\newcommand{\eaa}{\end{array}}
\newcommand{\na}{\nabla}
\newcommand{\M}{\mathcal{M}}
\newcommand{\g}{\overline{g}}
\newcommand{\bg}{\Box_{\g, A,Q}}
\newcommand{\T}{\mathcal{T}}
\renewcommand{\H}{\mathcal{H}}
\renewcommand{\Im}{\textnormal{Im}}
\newcommand{\ct}{\mathfrak{c}}
\newcommand{\tchi}{\Tilde{\chi}}
\newcommand{\bd}{b_\eta}
\newcommand{\rr}{\mathcal{R}}
\newcommand{\pminus}[1]{^{-#1}}
\newcommand{\taux}{{\kappa(x,v)}}
\newcommand\incircbin
\newcommand\@incircbin[2]
\theoremstyle{definition}
\theoremstyle{plain}
\newtheorem{theorem}{Theorem}[section]
\newtheorem{lemma}[theorem]{Lemma}
\newtheorem{proposition}[theorem]{Proposition}
\newtheorem{corollary}[theorem]{Corollary}
\theoremstyle{definition}
\newtheorem{definition}{Definition}[section]
\theoremstyle{definition}
\newtheorem{question}{Question}[section]
\newtheorem{remark}{Remark}[section]
\newcommand{\Norm}[1]{\Big\rVert #1 \Big\rVert}
\newcommand{\innerproduct}[2]{\langle #1, #2 \rangle}
\newcommand{\LR}{\mathcal{L}}
\newcommand{\R}{\mathbb{R}}
\def\C{\mathbb C}
\newcommand{\N}{\mathbb{N}}
\newcommand{\Cinf}{C^\infty}
\newcommand{\Cinfo}{C^\infty_0}
\newcommand{\B}{\mathcal{B}}
\newcommand{\A}{\mathcal{A}}
\newcommand{\ola}{\overline{A}}
\newcommand{\olm}{\overline{M}}
\newcommand{\ww}{\Box_{A,Q}}
\newcommand{\intr}[1]{\operatorname{Int} #1}
\def\p{\partial}
\DeclareMathOperator{\supp}{supp}
\begin{document}

\begin{abstract}
We study the problem of recovering a time dependent matrix valued potential on a globally hyperbolic manifold from the knowledge of the source to solution map of a wave equation including a connection 1-form term. We exhibit sufficient conditions for solving this inverse problem under the assumption that the the manifold is stationary and that the connection term is time independent. The proof is based on two ingredients. The first is reduction of the problem to the study of a non-Abelian light ray transform and holds assuming global hyperbolicity only. The second is the study of this transform and establishing a link with a Riemannian analogue. 
\end{abstract}

\title[Recovery of a matrix valued potential]{Recovery of a matrix valued potential for the wave equation on stationary spacetimes}

\author[S.FILIPPAS, L. OKSANEN, M. SARKKINEN]{Spyridon Filippas, Lauri Oksanen, Miika Sarkkinen}

\address{Department of Mathematics and Statistics, University of Helsinki, PO BOX 68, 00014, HELSINKI, Finland}
\email {spyridon.filippas@helsinki.fi}

\address{Department of Mathematics and Statistics, University of Helsinki, PO BOX 68, 00014, HELSINKI, Finland}
\email {lauri.oksanen@helsinki.fi}

\address{Department of Mathematics and Statistics, University of Helsinki, PO BOX 68, 00014, HELSINKI, Finland}
\email {miika.@sarkkinen.fi}

\maketitle
\tableofcontents

\section{Introduction}
In this article we deal with the problem of recovering the~\textit{time dependent, matrix valued} potential of a wave equation from the knowledge of the source to solution map. Similar questions have been recently studied in~\cite{mishra2021determining, kumar2024stabledeterminationtimedependentmatrix} where the authors prove uniqueness and stability results respectively for the recovery of a matrix valued potential in a Euclidean setting from partial boundary observations. We are interested in treating a similar problem in the general setting of globally hyperbolic geometries. The notion of global hyperbolicity gives the right class of Lorentzian manifolds guaranteeing existence and uniqueness of global solutions to the wave equation. We start by recalling some important definitions and facts that will be needed to state and prove our main results.

\subsection{Facts from Lorentzian geometry and setting of the problem} 
For an introduction to Lorentzian geometry we refer to~\cite{ONeill, ringstrom2009}. We consider $(\M, \g)$ a connected Lorentzian manifold of dimension $1+n$ with signature $(-,+,...,+)$. A vector $v \in T_p \M \backslash \{0\}, \: p \in \M$, is timelike, spacelike or null if
\begin{equation}
    g(v,v)<0,\quad g(v,v)>0, \quad g(v,v)=0,
\end{equation}
respectively. We say that $v \in T_p \M \backslash \{0\} $ is a causal vector if it is timelike or null, that is $g(v,v) \leq 0$. The manifold $(\M, \g)$ will be assumed to be time oriented. That means that there exists a vector field $T$ such that $T(p)$ is timelike for all $p \in \M$. This implies that for a causal vector $v$ one has $g(v,T)<0$ or $g(v,T)>0$. Then $v$ is said to be future-directed if $g(v,T)<0$ and past-directed if the opposite inequality holds. 

 A submanifold $S$ of $\M$ is called spacelike if all the non-zero tangent vectors of $S$ are spacelike. A smooth curve $\gamma$ is timelike if its tangent vector $\dot \gamma(s)$ is timelike for all $s \in [a,b]$ with analogous definitions for spacelike, null, causal and future/past directed. 

Given $p,q \in \M$ we write $p \leq q$ if $p=q$ or there is a future-directed causal curve from $p$ to $q$. This allows to define the~\textit{causal future} of a subset $A \subset \M$ as the set
\begin{equation}
    J^+(A)=\{q \in \M \textnormal{: there is $p$} \in S \textnormal{ such that } p\leq q \},
\end{equation}
with an analogous definition for the~\textit{causal past} $J^-(A)$ of $A$, for which the inequality above is reversed. We write $J^{\pm}(p)= J^{\pm}(\{p\})$ for points $p \in \M$. We can now define global hyperbolicity.
\begin{definition}
    The manifold $(\M, \g)$ is~\textit{globally hyperbolic} if
    \begin{enumerate}
        \item \label{point_one} there is no closed causal curve,

        \item \label{point_two} the causal diamonds $J^+(p) \cap J^-(q)$ are compact sets for all $p, q \in \M$. 
    \end{enumerate}
\end{definition}

We refer to~\cite{hounnonkpe2019} for this relatively recent definition. If $n \geq 2$ and $\M$ is non-compact then~\eqref{point_one} can be omitted from the definition above~\cite{hounnonkpe2019}. On the other hand, globally hyperbolic manifolds are non-compact. Global hyperbolicity has far reaching consequences, which are summarized in the next theorem. A hypersurface $\Sigma \subset \M$ is called a~\textit{Cauchy surface} if every inextendible timelike curve intersects $\Sigma$ exactly once.

\begin{theorem}
\label{prop_of_globhyper}
    The following are equivalent for $(\M, \g)$.

\begin{enumerate}
    \item $(\M, \g)$ is globally hyperbolic.

    \item There exists a smooth spacelike Cauchy surface in $\M$.

    \item There exists a temporal function, that is a smooth function $\tau : \M \to \R$ that satisfies $g(d \tau,d \tau)<0$, $d \tau $ is future-directed, and all level sets $\Sigma_t=\tau^{-1}(t)$ are smooth spacelike Cauchy surfaces. The function $\tau$ can be taken surjective and such that its restriction to any inextendible future-directed causal curve is strictly increasing and surjective onto $\R$. Moreover, $\M$ is diffeomorphic to $\R \times \Sigma$ via  $F :\R \times \Sigma_0 \to \M$ with
    \begin{equation}
        (F^*g)(t,x)=c(t,x)(-dt^2+h_t(x)),
    \end{equation}
 where $c>0$ is smooth, $h_t(x)$ is a Riemannian metric on $\Sigma_0$ smoothly depending on $t$ and $t=\tau$.    
\end{enumerate}
\end{theorem}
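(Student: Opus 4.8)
The plan is to read this theorem as a packaging of classical results in Lorentzian geometry---Geroch's topological splitting together with its smooth refinement due to Bernal and S\'anchez---and to organize the argument as the cycle of implications $(3)\Rightarrow(2)\Rightarrow(1)\Rightarrow(3)$. The implication $(3)\Rightarrow(2)$ requires nothing: statement $(3)$ explicitly exhibits the level sets $\Sigma_t=\tau^{-1}(t)$ as smooth spacelike Cauchy surfaces, so any one of them serves as the hypersurface demanded by $(2)$. For $(2)\Rightarrow(1)$ I would argue from the existence of a smooth spacelike Cauchy surface $\Sigma$. A closed causal curve, traversed repeatedly, extends to an inextendible causal curve meeting $\Sigma$ either never or infinitely often, contradicting that it must meet $\Sigma$ exactly once; hence there is no closed causal curve. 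For compactness of the diamonds $J^{+}(p)\cap J^{-}(q)$ I would use a limit-curve argument: any sequence in the diamond lies on causal curves from $p$ to $q$, these accumulate to a causal limit curve by the standard limit-curve lemma, and the Cauchy property of $\Sigma$ prevents escape to infinity, yielding compactness. These steps are treated in detail in \cite{ONeill, ringstrom2009}.

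The substantial direction is $(1)\Rightarrow(3)$, which I would break into a topological step and a smoothing step. For the topological step I would follow Geroch: fix a smooth measure $m$ on $\M$ with $m(\M)<\infty$, and set $t^{\pm}(p)=m(J^{\pm}(p))$. Global hyperbolicity forces $t^{\pm}$ to be continuous and to tend to $0$ along future/past inextendible causal curves, so that $\tau_0=\ln(t^{-}/t^{+})$ is a continuous Cauchy time function, strictly increasing and surjective onto $\R$ along future-directed causal curves, whose level sets are topological Cauchy surfaces; this already produces a homeomorphism $\M\cong\R\times\Sigma$.

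It then remains to smooth $\tau_0$ into a genuine temporal function $\tau$ with $g(d\tau,d\tau)<0$ everywhere, future-directed $d\tau$, and smooth spacelike Cauchy level sets---this is the refinement of Bernal and S\'anchez. Granting such a $\tau$, the orthogonal splitting follows from flowing along the normalized gradient of $\tau$: this flow is transverse to the level sets and defines the diffeomorphism $F:\R\times\Sigma_0\to\M$. In the resulting coordinates $d\tau$ is $g$-orthogonal to each $\Sigma_t$, so there are no cross terms and $F^{*}g=-c\,dt^2+g_t$ with $c>0$ the lapse and $g_t$ the induced Riemannian metric; writing $g_t=c\,h_t$ puts the metric in the stated conformal form $c(t,x)(-dt^2+h_t(x))$ with $t=\tau$. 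The strict monotonicity and surjectivity of $\tau$ along inextendible causal curves are inherited from those of $\tau_0$ together with the timelike character of $d\tau$.

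The main obstacle is precisely the smoothing step. Geroch's $\tau_0$ is only continuous, and naive mollification is dangerous: it can destroy both the everywhere-timelike character of the gradient---the condition $g(d\tau,d\tau)<0$ is open pointwise but is not robust under crude averaging when the causal cones tilt---and the Cauchy property of the level sets. The hard technical content, which I would import wholesale from Bernal and S\'anchez rather than reprove, is the construction of a smooth function whose differential lies inside the future cone at \emph{every} point, achieved through careful local models that control the causal cone and are patched together with a partition of unity subordinate to the time-function structure. Since the present paper uses this theorem only as a structural input, I would state it with attribution to Geroch and to Bernal and S\'anchez and refer to \cite{ONeill, ringstrom2009} for the textbook treatment.
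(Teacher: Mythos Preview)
Your proposal is correct and in fact more detailed than the paper's own treatment: the paper does not prove this theorem at all but simply records it as a known structural result, citing \cite{bernal2003, bernal2005} for the equivalence and \cite[Proposition~2.3]{oksanen2024rigiditylorentziancalderonproblem} for the precise formulation, together with the remark that the diffeomorphism $F$ arises by flowing $\Sigma_0$ along $-\nabla^{\g}\tau$. Your sketch of the Geroch construction plus Bernal--S\'anchez smoothing is the standard route to these references, and your conclusion---to state the result with attribution rather than reprove it---is exactly what the paper does.
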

We note that the diffeomorphism $F$ is obtained by flowing $\Sigma_0$ along $-\nabla^{\g} \tau$, where $\nabla^{\g}$ denotes the gradient operator on $(\M, \g).$

This equivalence has been established in~\cite{bernal2003, bernal2005}. See also the references in~\cite[Proposition 2.3]{oksanen2024rigiditylorentziancalderonproblem} from which comes the formulation in Theorem~\ref{prop_of_globhyper}.

\medskip

 We write $\Box_{\g}$ for the Laplace-Beltrami operator on $\M$ defined by $\Box_{\g}= - \textnormal{div}_{\g} \nabla^{\g}u$. For $N \in \N^*=\{1,2...\}$ we consider $A \in C^\infty (\M; \C^{N \times N} \otimes T^*\M)$ a matrix valued one-form and $Q \in C^\infty_0 (\M; \C^{N \times N})$ a matrix valued potential. We then define the operator $\bg$ acting on vectors $u(t,x)=(u_1(t,x),u_2(t,x)...u_N(t,x))^T$, $u_j \in C^\infty (\M), N \in \N^*$ by
\begin{equation}
    \bg u =\Box_{\g} u +2 A \cdot \nabla^{\g } u + Q u=\begin{pmatrix}
\bg u_1   \\
\bg u_2 \\
. \\
. \\
\bg u_N
\end{pmatrix}+  2\sum_{i,j=0}^n \g^{ij} A_i\begin{pmatrix}
\p_j u_1  \\
\p_j u_2 \\
. \\
. \\
\p_j u_N
\end{pmatrix}+Q \begin{pmatrix}
u_1  \\
u_2 \\
. \\
. \\
u_N
\end{pmatrix}.
\end{equation}
As already mentioned, globally hyperbolic manifolds provide with a general geometric setting in which the Cauchy problem for the wave equation is well posed. This is the content of the following proposition.
\begin{proposition}
\label{prop_wave_lorentz}
  Let $(\M, \g)$ be globally hyperbolic. Then for any $f \in C^\infty_0(\M)$ there exists a unique $u \in C^\infty (\M)$ such that
 \begin{equation}
 \label{eq_wave_lorentz}
\begin{cases}
    \bg u=f  \quad \textnormal{in } \M,\\
    \supp (u) \subset J^+(\supp (f)).
    \end{cases}
\end{equation}
\end{proposition}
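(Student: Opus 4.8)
The plan is to recognize $\bg$ as a \emph{normally hyperbolic} operator acting on sections of the trivial bundle $E = \M \times \C^N$, and then to invoke (or reprove, via energy estimates) the standard well-posedness theory for such operators on globally hyperbolic manifolds. The first and essential observation is that the lower-order terms $2A\cdot\nabla^{\g}$ and $Q$ do not affect the principal part: the principal symbol of $\bg$ at $(p,\xi)\in T^*\M$ is $\g^{ij}(p)\xi_i\xi_j$ times the $N\times N$ identity matrix, i.e.\ it acts diagonally and coincides with that of the scalar Laplace--Beltrami operator $\Box_{\g}$. Hence $\bg$ is normally hyperbolic, the system is coupled only through terms of order $\le 1$ with smooth coefficients, and the matrix-valued problem is no harder in principle than the scalar one. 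This reduction to the scalar wave operator at the principal level is the conceptual heart of the argument.

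For \textbf{existence}, I would use the retarded (future) Green's operator $G_+$ associated to a normally hyperbolic operator on a globally hyperbolic manifold (the theory going back to Leray, and in the vector-bundle setting to B\"ar--Ginoux--Pf\"affle). Setting $u = G_+ f$ produces a solution of $\bg u = f$, the mapping property of $G_+$ on $C^\infty_0(\M;\C^N)$ gives smoothness of $u$, and the defining support property of the retarded operator yields exactly $\supp(u)\subset J^+(\supp(f))$.

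For \textbf{uniqueness}, suppose $u_1,u_2$ both solve the problem; then $w=u_1-u_2$ satisfies $\bg w=0$ with $\supp(w)\subset J^+(\supp(f))$. Since $\supp(f)$ is compact, global hyperbolicity (compactness of causal diamonds) implies that $J^+(\supp f)\cap J^-(p)$ is compact for every $p$, so $w$ has \emph{past-compact} support and $\supp(w)\cap\Sigma_t$ is compact on each Cauchy slice. Using the temporal function $\tau$ and the global splitting $\M\cong\R\times\Sigma$ of Theorem~\ref{prop_of_globhyper}, I would define an energy $E(t)$ on $\Sigma_t$ from the stress-energy quantity of $w$ and, via the divergence theorem, derive a differential inequality $\tfrac{d}{dt}E(t)\le C\,E(t)$, where $C$ absorbs the bounded contributions of $A$ and $Q$ over the relevant compact causal region. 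As $E$ vanishes on all slices lying in the past of $\supp(f)$, Gr\"onwall's inequality forces $E\equiv 0$, hence $w\equiv 0$.

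I expect the main obstacle to be analytic bookkeeping rather than any genuinely new difficulty. One must verify that the coupling terms $2A\cdot\nabla^{\g}u$ and $Qu$ are absorbed by the energy method (they are, being lower order with smooth coefficients integrated over compact causal diamonds), and the one genuinely delicate passage is from local energy estimates to the global statement: here finite propagation speed together with the compactness of the diamonds $J^+(p)\cap J^-(q)$ in $(\M,\g)$ is what allows one to patch local solutions and to propagate both the solvability and the support condition $\supp(u)\subset J^+(\supp f)$ across all of $\M$.
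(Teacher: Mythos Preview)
Your proposal is correct and matches the paper's approach: the paper does not give a self-contained proof but simply cites B\"ar--Ginoux--Pf\"affle, \cite[Theorem 3.3.1]{bar2007} for existence and \cite[Theorem 3.1.1]{bar2007} for uniqueness, which is exactly the normally-hyperbolic framework you invoke. Your sketch of the energy-estimate argument for uniqueness is more detailed than what the paper provides, but it is precisely the content of the cited theorem.
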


For the existence part of the above proposition we refer to~\cite[Theorem 3.3.1]{bar2007} and the uniqueness follows from~\cite[Theorem 3.1.1]{bar2007}

Let $K \subset \M$ denote a fixed compact set. With Proposition~\ref{prop_wave_lorentz} at our disposition we can now define in an intrinsic way the source to solution map associated to the system~\eqref{eq_wave_lorentz}. Given $f \in C^\infty_0 (\M \backslash K)$ we write $u^f$ for the unique solution of~\eqref{eq_wave_lorentz} with source $f$. This allows to define a map
\begin{align}
    \rr: C^\infty_0 (\M \backslash K) &\to C^\infty (\M \backslash K), \\
    f &\to u^f_{ | \M \backslash K },
\end{align}
where $_{| \M \backslash K}$ denotes the restriction to $\M \backslash K$. Physically speaking, we suppose that we have access to the zone $\M \backslash K$ where we can use sources $f$ and measure the resulting waves. We want to know whether these measurements can determine a matrix valued potential $Q$ supported in $K$. In other words, we ask the following.

\begin{question}
\label{question_base}
 Consider $Q_j \in C^\infty_0 (\M; \C^{N \times N})$ with $\supp(Q_j) \subset K$, $j \in \{1,2\}$. Denote by $\rr_j$ the source to solution maps associated to $Q_j$, $j \in \{1,2\}$. Does $\rr_1=\rr_2$ imply that $Q_1=Q_2$?
\end{question}

\subsection{Main results}
Our first result establishes a link between the recovery of the potential $Q$ and a non-Abelian light ray transform and it holds without any additional assumption on the globally hyperbolic manifold $\M$. We write $P_{A, \gamma}$ for the parallel transport along $\gamma$ for the connection $-A$ (see Section~\ref{sec_reduction_glob_hyper} for a precise definition).
\begin{theorem}
    \label{thm_reduction_lorentz}
Let $\gamma : [a,b] \to \M$ be a null geodesic on $\M$. Suppose that the endpoints $\gamma(a), \gamma(b) \notin K$ lie outside of the support of $Q$. Then the source to solution map $\rr$ uniquely determines
 \begin{equation}
  \label{eq_light_ray_thm}
    \int_{\gamma(a)}^{\gamma(b)} P_{A,\gamma}(s)^{-1} Q(\gamma(s))P_{A,\gamma}(s)ds.
\end{equation}
\end{theorem}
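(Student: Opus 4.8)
The plan is to read ``$\rr$ uniquely determines \eqref{eq_light_ray_thm}'' as the assertion that any two potentials $Q_1,Q_2\in C^\infty_0(\M;\C^{N\times N})$ with $\supp Q_j\subset K$, sharing the same (known) $\g$ and $A$, which produce the same source to solution map, give the same expression \eqref{eq_light_ray_thm}. So I fix such a pair with $\rr_1=\rr_2$ and aim to show that the integral \eqref{eq_light_ray_thm} of $Q_1-Q_2$ vanishes. The first step is an integral identity. Let $\langle x,y\rangle=y^Tx$ be the bilinear (unconjugated) pairing on $\C^N$ and let $\Box_{\g,A,Q_2}^{t}$ be the formal transpose of $\Box_{\g,A,Q_2}$ for it; integration by parts turns the Laplace--Beltrami and connection terms into boundary contributions, the transpose carrying the opposite connection $-A^{T}$. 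For $f_1,f_2\in C^\infty_0(\M\setminus K)$ let $u_1=u^{f_1}$ solve $\Box_{\g,A,Q_1}u_1=f_1$ (future support, Proposition~\ref{prop_wave_lorentz}) and let $w$ be the past-supported solution of $\Box_{\g,A,Q_2}^{t}w=f_2$. Green's identity over a causal diamond $\Omega\supset K$ with $\partial\Omega\subset\M\setminus K$ gives
\begin{equation}
\int_{\Omega}\langle (Q_1-Q_2)u_1,w\rangle\,\dd V_{\g}=\int_{\Omega}\big(\langle f_1,w\rangle-\langle u_1,f_2\rangle\big)\,\dd V_{\g}-B,
\end{equation}
where $B$ collects the boundary terms on $\partial\Omega$. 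Since $\supp Q_j\subset K$, outside $K$ both operators reduce to $\Box_{\g}+2A\cdot\nabla^{\g}$, so comparing $u_1$ with the $Q_2$-solution of the same source and invoking $\rr_1=\rr_2$ shows that the Cauchy data on $\partial\Omega$ and the couplings against $f_1,f_2$ are insensitive to the replacement $Q_1\rightsquigarrow Q_2$. Subtracting the identity written for $Q_2$ alone then kills the right-hand side and yields
\begin{equation}
\int_{\Omega}\langle (Q_1-Q_2)u_1,w\rangle\,\dd V_{\g}=0
\end{equation}
for all solutions $u_1,w$ of the above type.

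Next I would feed this identity with highly concentrated solutions. Using the null geodesic $\gamma$, I would construct matrix-valued Gaussian beams of large frequency $\lambda$ concentrating on $\gamma([a,b])$: launch $u_1$ from a beam-shaped source $f_1$ placed just before $\gamma(a)$ in $\M\setminus K$, so that $u_1\approx a(s,y;\lambda)\,e^{i\lambda\varphi}$ in a thin tube around $\gamma$, and launch $w$ backward from near $\gamma(b)\in\M\setminus K$ with the conjugate phase, $w\approx b(s,y;\lambda)\,e^{-i\lambda\bar\varphi}$, so that the product $\langle(Q_1-Q_2)u_1,w\rangle$ is nonoscillatory and its transverse Gaussian weight $e^{-2\lambda\Im\varphi}$ forces localization to $\gamma$ as $\lambda\to\infty$. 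The hypothesis $\gamma(a),\gamma(b)\notin K$ is exactly what makes these launches possible. A crucial structural point is that the potential enters only at subprincipal order in the beam expansion, so although the true solutions $u_1,w$ are affected by the unknown $Q_j$ inside $K$, their leading amplitudes along $\gamma$ are governed by the $Q$-free transport and are therefore explicitly computable.

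The leading transport equation for $u_1$, restricted to $\gamma$, reads $\tfrac{d}{ds}a=\tfrac12(\Box_{\g}\varphi)a+A(\dot\gamma)a$, whose matrix part $\dot a=A(\dot\gamma)a$ is solved precisely by the parallel transport of the connection $-A$, giving $a(s)=\mu(s)\,P_{A,\gamma}(s)\,p$ for a constant polarization $p\in\C^{N}$ and a scalar factor $\mu$. The transposed beam carries the opposite connection and produces $b(s)=\nu(s)\,(P_{A,\gamma}(s)^{-1})^{T}q$, so that $\langle(Q_1-Q_2)u_1,w\rangle$ has leading coefficient $\mu\nu\,q^{T}P_{A,\gamma}^{-1}(Q_1-Q_2)P_{A,\gamma}\,p$. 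Choosing the scalar amplitudes with the standard Gaussian-beam normalization makes the concentration measure equal to $\dd s$, so the $\lambda\to\infty$ limit of the identity gives, for all $p,q\in\C^{N}$,
\begin{equation}
q^{T}\left(\int_{\gamma(a)}^{\gamma(b)}P_{A,\gamma}(s)^{-1}(Q_1-Q_2)(\gamma(s))P_{A,\gamma}(s)\,\dd s\right)p=0.
\end{equation}
Letting $p,q$ range over $\C^{N}$ forces the matrix integral to vanish, which is the claim.

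The step I expect to be the main obstacle is the rigorous Gaussian beam analysis in the matrix, non-Abelian setting. This requires: constructing beams whose $(\Box_{\g}+2A\cdot\nabla^{\g})$-remainder is $O(\lambda^{-\infty})$ in the relevant energy norm; proving, via energy estimates on $(\M,\g)$ together with finite speed of propagation, that the genuine solutions $u_1,w$ generated from beam-shaped sources differ from beams by a term that is negligible after the transverse normalization, and in particular that the unknown potential inside $K$ perturbs only the subprincipal orders and leaves the leading concentration intact; and tracking the noncommutativity of $A$ and $Q$ carefully, since it is precisely the ordering in the first transport equation that converts the naive product into the conjugation $P_{A,\gamma}^{-1}QP_{A,\gamma}$ and forces the backward beam to carry the inverse transpose transport.
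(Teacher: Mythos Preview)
Your proposal is correct but follows a genuinely different route from the paper. The paper uses a \emph{single} Gaussian beam and extracts the integral \eqref{eq_light_ray_thm} from the \emph{subprincipal} amplitude: writing $a_\sigma=a_0+\sigma^{-1}a_1+\cdots$, the term $a_0$ obeys the $Q$-free transport $\dot a_{0}=(A\cdot\dot\gamma)a_0$, while $a_{1}$ (restricted to $\gamma$) satisfies an inhomogeneous transport whose $Q$-dependent part $c_1$ solves $\dot c_1-(A\cdot\dot\gamma)c_1=Q\,a_0$, so that $c_1(\gamma(b))=P_{A,\gamma}(b)\bigl(\int P_{A,\gamma}^{-1}QP_{A,\gamma}\,ds\bigr)x_0$. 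A cutoff-commutator argument shows that for a beam source placed near $\gamma(a)\notin K$ the map $\rr$ determines the full solution near $\gamma(b)\notin K$; peeling off the known $a_0$ and the $Q$-independent piece of $a_1$ then recovers the integral. By contrast, you pair a forward beam with a backward beam for the transpose operator through an Alessandrini-type identity and read the integral off at \emph{leading} order: the conjugation $P_{A,\gamma}^{-1}QP_{A,\gamma}$ arises from the product of $P_{A,\gamma}$ (for $u_1$) and $(P_{A,\gamma}^{-1})^{T}$ (for $w$). Your derivation of the identity can in fact be streamlined: with $v:=u_1^{(1)}-u_1^{(2)}$ one has $\supp v\subset K$ directly from $\rr_1=\rr_2$, so the integration by parts has no boundary terms at all. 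What your approach buys is that only zeroth-order beams are needed and the argument fits the standard two-solution template; what the paper's approach buys is that no dual/backward problem enters and the integral is obtained as the endpoint value of an explicit ODE, which makes the role of $\rr$ (literally ``read off the solution outside $K$'') especially transparent.
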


\medskip

With the help of Theorem~\ref{thm_reduction_lorentz} our main problem reduces to that of studying a non-Abelian light ray transform. As it turns out, we are not able to answer to Question~\ref{question_base} in this level of generality. We are however able to exhibit a sufficient condition allowing to answer it in the affirmative assuming that $\M$ is in addition~\textit{stationary} and $A$ is~\textit{time-independent}. A spacetime is called stationary provided that there exists a complete timelike Killing vector field. Recall that a vector field is Killing if its flow preserves the metric. The flow of the Killing vector field gives rise to a global time coordinate. Since $\mathcal{M}$ is assumed to be globally hyperbolic, one can choose a Cauchy slice $\Sigma$ provided by Theorem~\ref{prop_of_globhyper} and obtain a time coordinate using the Killing vector flow. For the precise definitions, see Section \ref{sec-stat}. Time dependence or independence of objects on the Lorentzian manifold $\M$ is to be understood with respect to the time coordinate associated with the Killing field. For a time-independent connection in a stationary spacetime, the parallel transport $P_{A,\gamma}$ reduces to parallel transport along the projection of a null geodesic onto $\Sigma$.

We assume as well that $K$, the support of $Q$, is such that $K \subset \R \times \textnormal{Int}(M) \subset \M $ with $M$ an $n-$ dimensional Riemannian manifold with smooth boundary. The metric on $M$ is naturally related to the Lorentzian metric $\overline{g}$. Here the splitting $\R \times \textnormal{Int}(M)$ is such that the metric on $\mathcal M$ reads 
\begin{equation}
    \overline{g} = -(dt + \omega)^2 + g,
\end{equation}
where the 1-form $\omega$ and the Riemannian metric $g$ are time independent. A null geodesic of the above metric can be naturally described in terms of its projection on $M$, which is a magnetic geodesic $x(s)$ that satisfies the Lorentz force equation
\begin{equation}\label{eq_lorentz_force}
    \na_{\dot x}\dot x = F(\dot x).
\end{equation}
Here the bundle map $F: TM \to TM$ is given by the magnetic field, $F^i_j = -(d\omega)^i_j$. The stationary Lorentzian metric $\overline g$ thus gives rise to a magnetic system $(g,\omega)$ on $M$. We write $\varphi_s(x,v)$ for the flow of the unit speed magnetic geodesic starting at $x \in M$ in the direction $v$ having $[0,\taux]$ as maximal interval of definition, and we assume that $\varphi$ is nontrapping, that is, the exit time $\taux$ is a finite real number. 

Notice that if $A$ is time independent, it naturally defines a connection on the Riemannian manifold $M$. We denote by $SM$ the unit sphere bundle of $M$ and define a non-Abelian magnetic X-ray transform of a matrix valued function $w \in C^\infty(SM, \C^{N \times N})$ with $\supp w \subset \textnormal{Int } (SM )$ by
\begin{equation}
   I_\A w(x,v) = \int_0^\taux P_{\A}(s,x,v)\pminus{1}w(\varphi_s(x,v)) P_{\A}(s,x,v) ds, \quad \forall (x,v) \in \pt_+ SM.
\end{equation}
We write $P_{\A}(s,x,v)$ for the smooth map given by the parallel transport along $\varphi$ for the connection $-A$. We refer to Section~\ref{sec-stat} where all the above mentioned objects are defined and constructed in a detailed manner.

\medskip

Injectivity of $I_\A$ refers to the property
\begin{equation}
\label{property_injecivity_xray}
I_\A w=0 \textnormal{ for all }(x,v) \in \pt_+ SM \Longrightarrow w \equiv 0.
\end{equation}
In Theorem~\ref{thm_sufficient_condition_for_light_ray} we show that the light ray transform~\eqref{eq_light_ray_thm} is injective if $I_\A$ is injective. This gives the following.
\begin{theorem}
\label{thm_main}
    Suppose that the globally hyperbolic manifold $\M$ is stationary and that the connection form $A$ is time independent. Suppose that $\supp Q \subset \R \times \textnormal{Int}( M) $ with $M$ an $n-$dimensional Riemannian manifold with strictly magnetic convex boundary. Then injectivity of $I_\A$ implies that $\rr$ determines $Q$.
\end{theorem}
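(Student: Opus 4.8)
The plan is to chain together the two reduction theorems already established. The statement of Theorem~\ref{thm_main} asserts that, under stationarity and time-independence of $A$, injectivity of the magnetic X-ray transform $I_\A$ forces $\rr$ to determine $Q$. Since the paper explicitly says that Theorem~\ref{thm_sufficient_condition_for_light_ray} shows the light ray transform \eqref{eq_light_ray_thm} is injective whenever $I_\A$ is injective, my proof is essentially a bookkeeping argument connecting the Lorentzian light ray transform to the Riemannian magnetic X-ray transform.

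First I would set up the data. Suppose $Q_1, Q_2 \in C^\infty_0(\M;\C^{N\times N})$ are two candidate potentials with $\supp Q_j \subset \R \times \intr{M}$, and suppose their source to solution maps agree, $\rr_1 = \rr_2$. By linearity of the problem in $Q$ it is natural to pass to the difference $Q := Q_1 - Q_2$; the goal is then to show $Q \equiv 0$. Applying Theorem~\ref{thm_reduction_lorentz} to every null geodesic $\gamma:[a,b]\to\M$ whose endpoints lie outside $K$, the common source to solution map determines the quantity in \eqref{eq_light_ray_thm}, and since the two maps coincide this quantity must vanish for $Q$ along all such null geodesics. (One should note here that $Q$ may fail to be compactly supported in the matrix-potential sense if $Q_1,Q_2$ overlap only on $K$, but since both are supported in $K$ the difference is as well, so the hypothesis of Theorem~\ref{thm_reduction_lorentz} is met.)

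Next I would perform the geometric reduction to the spatial slice. Using stationarity, the metric splits as $\g = -(dt+\omega)^2 + g$ with $\omega$ and $g$ time independent, and the paper has already recorded that a null geodesic projects to a magnetic geodesic $x(s)$ on $M$ solving \eqref{eq_lorentz_force}, while the Lorentzian parallel transport $P_{A,\gamma}$ reduces to the parallel transport $P_{\A}(s,x,v)$ along this projected magnetic geodesic. Under this correspondence the vanishing of \eqref{eq_light_ray_thm} along null geodesics translates precisely into the vanishing of
\begin{equation}
  I_\A w(x,v) = \int_0^{\taux} P_{\A}(s,x,v)^{-1} w(\varphi_s(x,v)) P_{\A}(s,x,v)\, ds = 0
\end{equation}
for all $(x,v)\in\pt_+ SM$, where $w$ is the function on $SM$ induced by $Q$ restricted to the slice (constant along the $t$-direction by time independence, and compactly supported in $\intr{M}$ by the hypothesis on $K$). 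The nontrapping assumption guarantees $\taux<\infty$ so the transform is well defined, and strict magnetic convexity of $\pt M$ ensures that the family of magnetic geodesics through interior points exhausts the relevant directions so that $w$ is genuinely tested against the full transform.

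Finally, the hypothesis that $I_\A$ is injective applies directly: $I_\A w \equiv 0$ on $\pt_+ SM$ yields $w\equiv 0$ by \eqref{property_injecivity_xray}. Unwinding the identification between $w$ and $Q$ gives $Q\equiv 0$, i.e.\ $Q_1=Q_2$, which is exactly the affirmative answer to Question~\ref{question_base} under the stated hypotheses. The main obstacle I anticipate is not any single estimate but the careful verification of the geometric dictionary in the middle step: one must check that the reduction of $P_{A,\gamma}$ to $P_\A$ is compatible with the parametrization by $\pt_+ SM$, that time independence lets one replace the spacetime integral along $\gamma$ by the spatial magnetic-geodesic integral without losing information (in particular that every interior magnetic geodesic arises as the projection of some null geodesic with endpoints outside $K$), and that the induced function $w$ inherits the support and smoothness properties required by the definition of $I_\A$. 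Once this correspondence is pinned down, the theorem follows by concatenating the two injectivity statements.
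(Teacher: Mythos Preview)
Your proposal contains a genuine gap in the middle step. You write that the vanishing of the light ray transform ``translates precisely'' into the vanishing of $I_\A w$ for a function $w$ on $SM$ that is ``constant along the $t$-direction by time independence.'' But $Q$ is \emph{not} assumed time independent---only the connection $A$ is. The entire point of the theorem (and the paper) is to recover a \emph{time dependent} matrix potential; see the introduction and the statement of Theorem~\ref{thm_main}. Consequently, even though $P_{A,\gamma}$ does reduce to the spatial parallel transport $P_\A$, the integrand $Q(\gamma(s))$ still samples $Q$ along a curve that moves in both time and space, and the light ray transform of $Q$ does not collapse to the magnetic X-ray transform of any fixed spatial slice.

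The paper bridges this gap by a Fourier slicing argument that your outline omits. One first rewrites the light ray transform as a transport problem $XU + [\A,U] = -Q$ on $\R\times SM$ (Lemma~\ref{lemma-LR-charac}), then Fourier transforms in $t$ to obtain a one-parameter family of transport equations on $SM$. Differentiating in the Fourier variable $\tau$ at $\tau=0$ and arguing inductively, one reduces each Taylor coefficient to a transport problem of the form $GW + [\A,W] = -V$ with $W|_{\pt SM}=0$, which is exactly the problem governed by injectivity of $I_\A$ (Lemma~\ref{lemma-IA-charac}). Real analyticity of $\hat Q$ in $\tau$ (coming from compact support of $Q$) then forces $\hat Q\equiv 0$. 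This is the content of Lemma~\ref{lemma-reduction} and Theorem~\ref{thm_sufficient_condition_for_light_ray}, and it is the missing idea in your argument. A secondary imprecision: the map $Q\mapsto \rr$ is not linear in $Q$, so ``by linearity of the problem in $Q$'' is not quite right; what you actually use is that Theorem~\ref{thm_reduction_lorentz} determines the light ray transform from $\rr$, and the light ray transform is linear in $Q$ because $P_{A,\gamma}$ depends only on $A$.
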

Above strictly magnetic convex boundary is defined with respect to the magnetic system $(g,\omega)$. For a precise definition, we refer to \eqref{eq_magnetic_convex}.

\begin{remark}
 The assumption on the support of $Q$ is not particularly restrictive and essentially asks that the projection of $K$ on the Cauchy slice $\Sigma$ is strictly contained in $\Sigma$ (see Figure~\ref{fig_support}). This is for instance automatically satisfied if $\Sigma$ is non-compact. 
\end{remark}

The proof of Theorem~\ref{thm_reduction_lorentz} is based on a Gaussian beam construction. Gaussian beams are approximate solutions of the wave equation concentrating near a null geodesic. With Theorem~\ref{thm_reduction_lorentz} at our disposal Question~\ref{question_base} reduces to the study of a non-Abelian light ray transform. We show that the light ray transform can be analyzed in terms of a transport problem on the tangent bundle of a stationary Lorentzian manifold. Using time independency we apply Fourier slicing techniques to reduce this into a transport problem on the unit sphere bundle of a constant time slice. We show that the new transport problem is related to the non-Abelian magnetic X-ray transform. The sufficient condition for the inversion of the light ray transform is obtained in Theorem~\ref{thm_sufficient_condition_for_light_ray}, which combined with Theorem~\ref{thm_reduction_lorentz} proves Theorem~\ref{thm_main}.

\medskip

To make the abstract assumption in our main theorem more tangible, it is useful to have some concrete examples where the assumption is guaranteed to hold. We formulate these examples into the following corollaries.

\begin{corollary}\label{cor_A_nonzero}
    Suppose that $A$ is a time-independent unitary connection on a trivial bundle. Then $\mathcal{R}$ determines $Q$ if either of the following holds.
    \begin{enumerate}
        \item $M$ is a surface and the magnetic system $(g,\omega)$ is simple.
        \item The Lorentzian manifold $\overline M$ is static, $\dim (M) \geq 3$, $\pt M$ is strictly convex, and $M$ admits a strictly convex foliation.
    \end{enumerate}
\end{corollary}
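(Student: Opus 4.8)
The plan is to derive Corollary from Theorem by verifying that under each listed geometric hypothesis the non-Abelian magnetic X-ray transform $I_\A$ is injective, since Theorem already reduces the recovery of $Q$ to exactly this injectivity. So the work is entirely on the Riemannian side: given a time-independent unitary connection $A$ on the trivial bundle over $M$, I must check that the two stated settings fall within known injectivity results for non-Abelian X-ray transforms with a connection, in the presence of a magnetic system $(g,\omega)$.

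First I would treat case (1), where $M$ is a surface and $(g,\omega)$ is simple. Let me think about the structure. In two dimensions the injectivity of the non-Abelian (attenuated/connection) X-ray transform for simple surfaces is well understood, and the relevant tool is a Pestov-type energy identity adapted to the magnetic flow together with the analysis of the scalar and connection-twisted transport equation along the magnetic geodesic flow. For a simple magnetic system the magnetic geodesics between boundary points are unique and vary smoothly, the exit time is finite (nontrapping), and $\pt M$ is strictly magnetic convex. Under these conditions the matrix holonomy/parallel-transport weight $P_\A$ is smooth and one can run the standard argument: if $I_\A w = 0$, introduce the function on $SM$ solving the connection transport equation with boundary value $w$ integrated against $P_\A$, and exploit the surjectivity/energy estimate coming from simplicity to conclude $w\equiv 0$. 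I would cite the existing two-dimensional results on attenuated and non-Abelian ray transforms for simple magnetic systems (Paternain--Salo--Uhlmann type), noting that unitarity of the connection is exactly the structural hypothesis those theorems require to control the twisted transport. The main point to make explicit is that our $I_\A$ is precisely of the form covered there once the magnetic system is simple.

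For case (2), the Lorentzian manifold is static rather than merely stationary, which forces $\omega=0$, so the magnetic system degenerates to an ordinary (magnetic-free) Riemannian geodesic system $(g,0)$ on $M$. Then $I_\A$ is the usual non-Abelian geodesic X-ray transform twisted by a unitary connection. With $\dim M \ge 3$, $\pt M$ strictly convex, and $M$ admitting a strictly convex foliation, I would invoke the layer-stripping / localized-microlocal inversion machinery for the geodesic X-ray transform on manifolds foliated by strictly convex hypersurfaces (Uhlmann--Vasy and its extensions to matrix weights and connections, e.g. Paternain--Salo--Uhlmann--Zhou). The convex foliation condition is precisely what lets one invert the transform inductively through the foliation, and the unitary connection is handled because the parallel transport weight is invertible and of the admissible form. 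So again $I_\A$ is injective, and Theorem delivers the conclusion that $\rr$ determines $Q$.

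The main obstacle, and the step deserving the most care, is verifying that our $I_\A$ as defined in the introduction matches verbatim the transform appearing in the cited injectivity theorems — in particular that the parallel transport $P_\A$ for the connection $-A$ enters as the conjugation $P_\A^{-1} w P_\A$ they require, that the unitarity hypothesis on $A$ is what guarantees the energy estimates (rather than, say, skew-Hermiticity in a different convention), and that strict magnetic convexity of $\pt M$ as defined in \eqref{eq_magnetic_convex} coincides with the convexity hypothesis in those works. Once these identifications are pinned down, both cases are immediate applications, so I would present the corollary's proof as two short paragraphs each reducing to a citation after checking the hypotheses align, with the bulk of the prose devoted to that alignment.
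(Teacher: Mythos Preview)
Your high-level strategy matches the paper's: reduce via the main theorem to injectivity of $I_\A$, then cite Ainsworth for simple magnetic surfaces in case (1) and Paternain--Salo--Uhlmann--Zhou for the convex-foliation setting in case (2), after noting that static forces $\omega=0$ so the magnetic flow becomes the ordinary geodesic flow.

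However, there is a real gap precisely at the step you flag as the ``main obstacle,'' and your proposal does not resolve it. The transform $I_\A$ here is a \emph{conjugation} transform $\int P_\A^{-1} w\, P_\A$, whose transport equation on $SM$ is $GW + [\A,W] = -V$ with a commutator. The cited injectivity theorems treat the one-sided attenuated equation $GW + \A W = -V$; they do not directly cover the commutator form, so the phrase ``as the conjugation $P_\A^{-1} w P_\A$ they require'' is a misreading. The paper devotes essentially all of its proof to bridging this: one regards $W \in \C^{N\times N}\cong \C^{N^2}$ and observes that $\B := [\A,\cdot]$ is a genuine $\C^{N^2\times N^2}$-valued attenuation, arising from the induced connection on the endomorphism bundle $\mathrm{End}(E)$. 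The substantive check is that unitarity survives: if $\Phi$ and $B_i$ are skew-Hermitian, then the adjoint maps $\tilde\Phi = [\Phi,\cdot]$ and $\tilde B_i = [B_i,\cdot]$ are skew-Hermitian for the trace inner product $\langle X,Y\rangle = \Tr(X^\dagger Y)$ (a short computation using cyclicity of trace). Only after this reduction do the cited theorems apply, now in rank $N^2$. Your parenthetical about ``skew-Hermiticity in a different convention'' understates the matter; this is a structural reduction, not a convention alignment, and without it the citations do not cover $I_\A$.
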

We prove this in Section \ref{sec_non_abelian_light}.
\begin{corollary}\label{cor_A_zero}
    Suppose that $A = 0$ and $\dim (M) \geq 2$. Then $\mathcal{R}$ determines $Q$ provided that $M$ is simply connected, has a strictly magnetic convex boundary, i.e. a boundary satisfying \eqref{eq_magnetic_convex}, and $\varphi$ is nontrapping and free of conjugate points.
\end{corollary}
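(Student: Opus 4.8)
The plan is to deduce the corollary directly from Theorem~\ref{thm_main}, so that the whole task collapses to verifying injectivity of $I_\A$ under the stated hypotheses. The first and decisive simplification comes from setting $A=0$: the connection $-A$ is then trivial, its parallel transport satisfies $P_\A(s,x,v)=\mathrm{Id}$ for every $(s,x,v)$, and the conjugation in the definition of $I_\A$ disappears, leaving
\begin{equation}
  I_\A w(x,v)=\int_0^{\taux} w(\varphi_s(x,v))\,ds .
\end{equation}
Thus the transform acts entrywise on the matrix-valued function $w$, so that $I_\A w=0$ holds if and only if $I_\A w_{ij}=0$ for each scalar entry $w_{ij}$. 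In particular, injectivity of $I_\A$ on $\C^{N\times N}$-valued functions is equivalent to injectivity of the ordinary scalar magnetic X-ray transform, and the non-Abelian problem reduces to a classical one. This is precisely why the case $A=0$ can be separated from Corollary~\ref{cor_A_nonzero}: no non-Abelian machinery is needed, and one may appeal to the weaker no-conjugate-points hypothesis rather than a foliation.

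Next I would record that the four hypotheses — $M$ simply connected, $\pt M$ strictly magnetic convex in the sense of \eqref{eq_magnetic_convex}, $\varphi$ nontrapping, and $\varphi$ free of conjugate points — are exactly the conditions defining a \emph{simple} magnetic system $(M,g,\omega)$. For such systems the magnetic X-ray transform on functions is injective, which is the content of the Pestov-identity energy estimate for the magnetic flow established by Dairbekov, Paternain, Stefanov and Uhlmann. I would invoke this result and, for completeness, recall its mechanism: given a function $f$ on $M$ with vanishing transform, one forms the integral function $u(x,v)=\int_0^{\kappa(x,v)} f(\varphi_s(x,v))\,ds$, which solves the transport equation $X u=-\pi^* f$ (with $X$ the generator of $\varphi$) and vanishes on all of $\pt SM$ because the transform is zero; the Pestov inequality, rendered coercive by the absence of conjugate points and with boundary terms of favorable sign thanks to the magnetic convexity \eqref{eq_magnetic_convex}, then forces $\nabla_v u=0$ and hence $f\equiv 0$.

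Combining the two observations gives injectivity of $I_\A$, and Theorem~\ref{thm_main} then yields that $\mathcal{R}$ determines $Q$, which is the assertion of the corollary.

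The main obstacle is not the scalar injectivity statement, which is quotable, but ensuring that the earlier reduction genuinely feeds $I_\A$ an object with the correct tensorial structure. Only for $0$-tensors (functions pulled back from $M$) is the X-ray transform injective; for arbitrary functions on $SM$ the kernel is the large space $\{X h : h|_{\pt SM}=0\}$, so injectivity would fail. I would therefore trace through the reduction of Section~\ref{sec-stat} to confirm that the quantity to which $I_\A$ is applied is a genuine function on $M$, and simultaneously check that the convexity condition \eqref{eq_magnetic_convex} is the same one under which the boundary contribution in the Pestov identity carries the right sign; these two compatibility checks are where the care is required.
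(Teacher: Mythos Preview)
Your proposal is correct and follows essentially the same route as the paper: trivialize the parallel transport when $A=0$, reduce $I_\A$ to the scalar magnetic X-ray transform acting entrywise, invoke the injectivity result of Dairbekov--Paternain--Stefanov--Uhlmann for simple magnetic systems (the paper cites this together with Ainsworth's result), and conclude via Theorem~\ref{thm_main}. Your closing concern about the tensorial structure is well placed and is indeed resolved by the reduction in Section~\ref{sec_non_abelian_light}: the function fed to $I_\A$ in Lemma~\ref{lemma-reduction} is $\hat Q(\tau,\cdot)$, a genuine function on $M$, so only injectivity on $0$-tensors is required, exactly as stated in Theorem~\ref{thm_sufficient_condition_for_light_ray}.
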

\begin{proof}
    With $A=0$, the parallel transport map $P_\A$ becomes trivial, and the transform $I_\A w$ reduces to the magnetic X-ray transform of each matrix entry of $w$. The claim then simply follows from \cite[Thm. 5.3]{dairbekov2007boundary} and \cite[Thm. 1.2]{Ains_13} combined with our Theorem~\ref{thm_main}.
\end{proof}

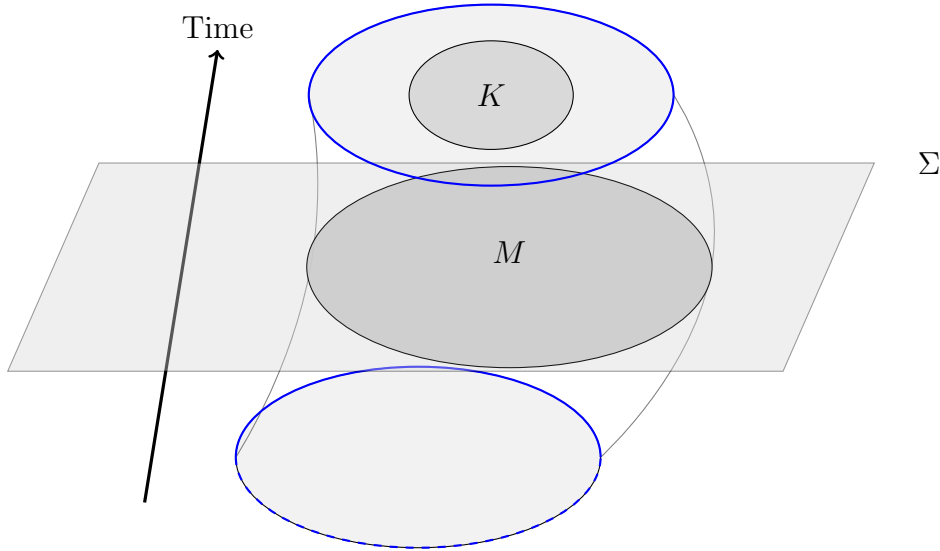
\begin{figure}
\centering
\begin{tikzpicture}[scale=1.2]

\def\rx{2}   
\def\ry{1}   
\def\h{4}    
\def\tilt{0.8} 

\coordinate (B) at (0,0);
\coordinate (T) at (\tilt,\h);

\draw[fill=gray!10] (B) ellipse ({\rx} and {\ry});
\draw[fill=gray!10] (T) ellipse ({\rx} and {\ry});

\draw[fill=gray!30] (T) ellipse ({0.9} and {0.6});
\node[black!50!black] at (T) {$K$};

\draw[gray] plot [smooth, tension=1] coordinates {($(B)+(-\rx,0)$) (-1.2,2) ($(T)+(-\rx,0)$)};
\draw[gray] plot [smooth, tension=1] coordinates {($(B)+(\rx,0)$) (3.2,2) ($(T)+(\rx,0)$)};

\draw[thick,blue] ($(B)+(\rx,0)$) arc (0:180:{\rx} and {\ry});
\draw[thick,blue,dashed] ($(B)+(-\rx,0)$) arc (180:360:{\rx} and {\ry});

\draw[->,very thick] ($(B)+(-3,-0.5)$) -- ($(T)+(-3,0.5)$) node[above] {Time};

\def\sliceY{2.1}     
\def\sliceWidth{8}   
\def\sliceHeight{0.3} 

\coordinate (S1) at ($(-\sliceWidth/2+0.5,\sliceY+\sliceHeight/2+1)$);
\coordinate (S2) at ($( \sliceWidth/2+1,\sliceY+\sliceHeight/2+1)$);
\coordinate (S3) at ($( \sliceWidth/2-0.,\sliceY-\sliceHeight/2-1)$);
\coordinate (S4) at ($(-\sliceWidth/2-0.5,\sliceY-\sliceHeight/2-1)$);

\draw[fill=gray!30,opacity=0.4] (S1) -- (S2) -- (S3) -- (S4) -- cycle;

\node[black!50!black] at ($(S2)+(0.6,0)$) {$\Sigma$};

\coordinate (MC) at (1,\sliceY); 
\draw[fill=gray!40,opacity=0.9] (MC) ellipse ({\rx/0.9} and {\ry/0.9});
\node at ($(MC)+(0,-\ry/1.2 + 1)$) {$M$};

\draw[thick,blue] ($(T)+(\rx,0)$) arc (0:180:{\rx} and {\ry});
\draw[thick,blue] ($(T)+(-\rx,0)$) arc (180:360:{\rx} and {\ry});

\end{tikzpicture}
\caption{We assume that the support of $Q$ is contained in some spacetime cylinder with the time coordinate given by the flow of the Killing vector field.}
\label{fig_support}
\end{figure}

\subsection{Previous literature} One of the main approaches for attacking hyperbolic inverse problems is the Boundary Control method, originating in the seminal work of Belishev~\cite{belishev1987approach} in the context of the isotropic wave equation. It allows to recover the geometry or potential from the knowledge of the Dirichlet to Neumann map or variants under very general assumptions on the Riemannian factor of the hyperbolic equation. The main ideas of Boundary Control method are used in~\cite{kurylev2018a} in which the authors recover a matrix valued potential (and some other quantities) from a partial Dirichlet to Neumann map associated to the wave equation of the connection Laplacian. This result holds under general geometric assumptions, however the potential is assumed to be \textit{time independent}. This is due to the fact that Boundary Control method uses in a crucial way the unique continuation result of~\cite{tataru1995} for which time independency or at least analyticity with respect to time for all of the coefficients of the operator is needed. Recently, a variation of the Boundary Control method was used in~\cite{alexakis2022} allowing to attack the Lorentzian Calderón problem for time dependent potentials that are only \textit{smooth} in time. However, in those references, other assumptions are imposed on the Lorentzian manifold, the most important of which is some curvature bounds which would not hold for instance in the presence of positive curvature. Concerning time dependent coefficient problems let us as well mention~\cite{doi:10.1137/21M1400596} in which the stability of a formally determined hyperbolic inverse problem was obtained based on a modification of the Bukhgeim-Klibanov method~\cite{bukhgeim1981global}.

The approach we follow in the paper is based on the construction of geometric optics solutions. Those allow to make a link between time dependent coefficient determination problems for the wave equation and the light ray transform on Lorentzian manifolds. In the setting of Minkowski spacetime, invertibility of the light ray transform was first proved for scalar functions in~\cite{stefanov1989uniqueness}. Since then, geometric optics solutions (or some generalizations like Gaussian beams) have been used in various geometric contexts, reducing an inverse problem to the study of a light ray transform. For a (non exhaustive) list of works on this approach see for instance~\cite{ANIKONOVROMANOV+1997+487+490, aicha2015stability, kian2019a, feizmohammadi2021b, stefanov2018} as well as~\cite{feizmohammadi2021light, oksanen2025interplay, lassas2020} for works mainly focused on the invertibility of the light ray transform. Let us point out that the strength of our result with respect to the previous literature relies on the fact that the potential is~\textit{time dependent and matrix valued}, and that the wave equation includes a connection term on a globally hyperbolic setting.

\subsection{Outline of the paper}
The proof of Theorem~\ref{thm_reduction_lorentz} is the content of Section~\ref{sec_reduction}. We start by giving a proof in the Minkowski case, relying on geometric optics solutions. This allows to follow the main ideas of the proof in a simpler geometric context. We then give the general proof. In Section~\ref{sec-stat} we give some useful facts about stationary manifolds. Next, we study the non-Abelian light ray transform in stationary spacetimes in Section~\ref{sec_non_abelian_light}. We conclude with two appendices containing some useful geometric facts.

\section{Reduction to a non-Abelian light ray transform}
\label{sec_reduction}

\subsection{The Minkowski case}

As a warm-up, we give in this section a proof of an analogue of Theorem~\ref{thm_reduction_lorentz} in the simpler setting of Minkowski geometry. That is, the globally hyperbolic manifold is $\R^{1+n}$ equipped with the Minkowski metric $g_{\textnormal{Min}}=-dt^2+dx^2$. This allows to understand the idea of the proof of Theorem~\ref{thm_reduction_lorentz} with less technicalities. We will assume here that the potential is supported in some cylinder in space-time. More precisely, on $\R^{1+n}$ we consider the wave operator 
\begin{equation}
    \ww=\Box + 2 A \cdot \nabla+Q,
\end{equation}
where we write $\Box= \p^2_t- \Delta$ for the standard wave operator and $A$ is a time dependent matrix valued 1-form 
\begin{equation}
A=A_0(t,x) dt+ \sum_{i=1}^n A_i(t,x)dx^i, \quad A_j \in \C^{N \times N}, N \in \N^*.
\end{equation}
The matrix valued potential $Q=Q(t,x) \in \C^{N \times N}$ is time dependent and satisfies $\supp (Q) \subset \R \times \Omega$ with $\Omega \subset \R^n$ bounded. Given a displacement vector $$u(t,x)=(u_1(t,x),u_2(t,x),...,u_N(t,x))^T,$$ we consider the operator $\ww$ acting as
\begin{align}
\ww u &= \Box u +2  \sum_{j=0}^n A_j \p_j u+Q u  =  \begin{pmatrix}
\Box u_1   \\
\Box u_2 \\
. \\
. \\
\Box u_N
\end{pmatrix}+  2\sum_{j=0}^n A_j\begin{pmatrix}
\p_j u_1  \\
\p_j u_2 \\
. \\
. \\
\p_j u_N
\end{pmatrix}+Q \begin{pmatrix}
u_1  \\
u_2 \\
. \\
. \\
u_N
\end{pmatrix},
\end{align}
with $\p_0= \p_t$ and $\p_j=\p_{x_j}, $ $j \in \{1,2...,n\}$. For a vector valued source $f \in C^\infty_0((0,T) \times (\R^n \backslash \Omega))$ we consider the solution of the system
\begin{equation}
\begin{cases}
    \ww u=f & \textnormal{in} \: (0,T) \times \R^n,\\
    u_{|t=0}= \p_t u_{|t=0}=0  & \textnormal{in} \: \R^n.
    \end{cases}
\end{equation}
This allows to define the source to solution map
\begin{equation}
    f \mapsto Rf:= u_{|x \notin \Omega}.
\end{equation}
We define $P_{A, \gamma}(s)$ as the normalized fundamental matrix solution to the system of ODEs
\begin{equation}
            \p_s u - (A \cdot  \dot \gamma )u=0.
\end{equation}
That is $P_{A, \gamma}(s) \in \C^{N \times N }$ satisfies
\begin{equation}
\label{def_transport_minkowski}
     \p_s P_{A, \gamma}(s)- (A \cdot  \dot \gamma )P_{A, \gamma}(s)=0, \quad P_{A, \gamma}(0)=\textnormal{Id}.
\end{equation}

Recall that a geodesic $\gamma$ is null if it has light-like tangent vector $\dot \gamma$. In this section we prove the following.

\begin{proposition}
\label{prop_minkow_reduction}
Let $\gamma : [0, \ell] \to (0,T) \times \R^n$ be a null geodesic on $\R^{1+n}$ for the Minkowski metric. Assume that the endpoints $\gamma(0), \gamma (\ell ) \notin \Omega$ lie outside of $\Omega$. Then the source to solution map $R$ uniquely determines 
 \begin{equation}
    \int_0^\ell P_{A,\gamma}(s)^{-1} Q(\gamma(s))P_{A,\gamma}(s)ds.
\end{equation}
\end{proposition}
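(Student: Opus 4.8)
The plan is to probe the potential with geometric optics solutions (Gaussian beams) concentrated on the null geodesic $\gamma$, and to extract the light ray transform in Proposition~\ref{prop_minkow_reduction} from a bilinear Green identity whose only surviving contributions are controlled by the source to solution map $R$. Since that transform depends linearly on $Q$, it suffices to prove the corresponding uniqueness statement: if two potentials $Q_1,Q_2$ supported in $\R\times\Omega$ give rise to the same map $R$, then
\[
\int_0^\ell P_{A,\gamma}(s)^{-1}\big(Q_1-Q_2\big)(\gamma(s))\,P_{A,\gamma}(s)\,ds=0 .
\]
Write $L_j=\Box+2A\cdot\nabla+Q_j$ and fix the pairing $\langle u,v\rangle=\int v^{*}u\,dt\,dx$. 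First I would construct, for large $\rho$, a forward Gaussian beam $U$ with $L_2U=O(\rho^{-K})$ concentrated in a $\rho^{-1/2}$-tube around $\gamma$, of the form $e^{i\rho\phi}(a_0+\rho^{-1}a_1+\cdots)$, where $\phi$ is null (the Minkowski eikonal equation is solved by any null covector) and carries the usual transverse quadratic imaginary part forcing concentration. The decisive point is that the order-$\rho$ transport equation for $a_0$ along $\gamma$ is exactly the parallel transport equation \eqref{def_transport_minkowski}, so $a_0(\gamma(s))=P_{A,\gamma}(s)\,b$ for a free constant polarization $b\in\C^N$, and $Q$ only enters the subleading amplitudes. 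I would build in the same way a backward beam $V$ for the formal adjoint $L_1^{*}$, which is again a connection wave operator, now for the connection $-A^{*}$; its leading amplitude solves the conjugate transport equation and equals $\big(P_{A,\gamma}(s)^{-1}\big)^{*}c$, so that the product $V^{*}(Q_1-Q_2)U$ reproduces precisely the conjugated integrand $c^{*}P_{A,\gamma}(s)^{-1}(Q_1-Q_2)(\gamma(s))P_{A,\gamma}(s)\,b$.

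Next I would turn these approximate beams into genuine solutions emitted from outside $\Omega$. Using $\gamma(0),\gamma(\ell)\notin\Omega$, choose a temporal cutoff $\chi(t)$ equal to $0$ for small $t$ and to $1$ after an early interval during which $\gamma$ still lies over $\R^{n}\setminus\Omega$, and set $f:=[L_2,\chi]\,U$ (excising the super-polynomially small Gaussian tails so that $f\in C^{\infty}_0((0,T)\times(\R^{n}\setminus\Omega))$). Then $f$ is supported outside $\Omega$, and the true solution $u_2:=u^{f}$ of $L_2u=f$ differs from $U$ only through the small source $\chi\,L_2U$; an energy estimate yields $u_2=U+O(\rho^{-K})$ throughout the slab where $\gamma$ crosses $\Omega$. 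Because $R$ is the same for $Q_1$ and $Q_2$, the forward solution $u_1:=u^{f}$ for $Q_1$ satisfies $u_1|_{\R^{n}\setminus\Omega}=Rf=u_2|_{\R^{n}\setminus\Omega}$, so $w:=u_1-u_2$ vanishes on $\R^{n}\setminus\Omega$, has zero Cauchy data at $t=0$, and solves $L_1w=(Q_2-Q_1)u_2$. In the same manner I would realize the backward beam $V$ as $v:=u^{f_v}$ with $f_v:=[L_1^{*},\tilde\chi]\,V$ supported near $\gamma(\ell)$ outside $\Omega$, giving $v=V+O(\rho^{-K})$ and $v\equiv 0$ near $t=T$.

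The extraction then rests on the Green identity on $(0,T)\times\R^{n}$, in which the spatial terms integrate to zero by decay and only the times $t=0,T$ contribute to the boundary:
\[
\int \langle L_1 w,\,v\rangle-\langle w,\,L_1^{*}v\rangle\,dt\,dx=B[w,v].
\]
Here $B[w,v]=0$ since $w$ has zero data at $t=0$ and $v$ vanishes near $t=T$, while $L_1^{*}v=f_v$ is supported in $\R^{n}\setminus\Omega$ where $w\equiv0$, so $\int\langle w,f_v\rangle=0$. Hence $\int\langle (Q_2-Q_1)u_2,\,v\rangle=0$. Replacing $u_2,v$ by the beams $U,V$ (the difference contributes $O(\rho^{-K})$), letting $\rho\to\infty$, and carrying out the transverse Gaussian integrals localizes the integral to $\gamma$ and gives, up to a fixed nonzero normalization,
\[
\int_0^\ell c^{*}\,P_{A,\gamma}(s)^{-1}\big(Q_1-Q_2\big)(\gamma(s))\,P_{A,\gamma}(s)\,b\;ds=0 .
\]
As $b,c\in\C^{N}$ are arbitrary, the matrix integral itself vanishes, which is the claim.

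The main obstacle is the Gaussian beam analysis underlying these formal steps: verifying that the leading transport equation reproduces exactly the parallel transport \eqref{def_transport_minkowski}, so that the non-Abelian conjugation $P^{-1}(\cdot)P$ emerges correctly for both the forward beam and the adjoint beam; controlling all remainders in an energy norm so that $u_2=U+O(\rho^{-K})$ and the tail excisions are negligible; and justifying the concentration limit, with its Jacobian and normalization, as $\rho\to\infty$. The accompanying geometric constraint, which makes the hypothesis $\gamma(0),\gamma(\ell)\notin\Omega$ essential, is to keep the emitted sources $f,f_v$ genuinely supported in $\R^{n}\setminus\Omega$ while the beams still resolve the crossing of $\Omega$; this is exactly what dictates the placement of the temporal cutoffs $\chi,\tilde\chi$.
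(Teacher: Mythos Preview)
Your approach is correct but genuinely different from the paper's. The paper uses a \emph{single-beam} argument with real phase $\phi=t-x^1$ (no need for Gaussian beams in Minkowski): it constructs one geometric optics solution $e^{i\sigma\phi}(a_0+\sigma^{-1}a_1+\sigma^{-2}a_2)$, shows directly that $R$ determines the subleading amplitude $a_{1}$ at the exit point $\gamma(\ell)$, and then isolates the $Q$-dependent part of $a_1$ by subtracting the solution $b_1$ of the same transport ODE with $Q=0$. The difference $c_1=\tfrac{2}{i}(a_1-b_1)$ solves $\partial_s c_1-(A\cdot\dot\gamma)c_1=Qa_0$, whose explicit solution at $s=\ell$ is $P_{A,\gamma}(\ell)\int_0^\ell P^{-1}QP\,ds\,x_0$; varying $x_0$ over a basis gives the matrix integral. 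Your route instead uses the standard \emph{two-beam / bilinear} method: a forward beam $U$ for $L_2$ and a backward adjoint beam $V$ for $L_1^*$, combined through a Green identity so that the conjugation $P^{-1}(\cdot)P$ emerges from the product $V^*(\,\cdot\,)U$ of the two leading amplitudes. Both are valid; the paper's method avoids constructing the adjoint beam and the Green pairing altogether and reads the transform off a single amplitude, while your method is the more familiar Sylvester--Uhlmann-type argument and makes the non-Abelian conjugation appear very transparently. A minor remark: in Minkowski the complex Gaussian-beam phase you invoke is unnecessary (a real eikonal solution exists), and the tail-excision step you mention can be avoided by using a compactly supported transverse cutoff from the outset, as the paper does.
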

In the statement above we suppose that the time component of the geodesic $\gamma$ lies on the interval $(0,T)$. This assumption can be always satisfied by choosing $T>0$ large enough.

\medskip

\textit{Proof of Proposition~\ref{prop_minkow_reduction}.} Remark that null geodesics in the Minkowski geometry are precisely straight lines of the form 
$$\gamma(s)=(s+t_0,s v+x_0), \quad s \in \R,$$ 
with $(t_0,x_0) \in \R^{1+n}$ and $v \in \R^n$ such that $|v|=1$. Up to a rotation and translation we may assume without loss of generality that $\gamma(s)=(s,s,0,...,0)$. We now construct some geometric optics solutions. That is solutions of the form $e^{i\sigma \phi}a_{\sigma}$ with 
\begin{equation}
\label{def_of_a}
a_\sigma=a_0+\sigma^{-1}a_1+\sigma^{-2}a_2,
\end{equation}
where $\sigma>1$ is a large parameter, $\phi$ a real valued phase function and $a_j=a_j(t,x)=(a^1_j(t,x),a^2_j(t,x),...,a^N_j(t,x))^T \in \C^{N}$. We will chose $a_j$ so that 
\begin{equation}
    \ww e^{i \sigma \phi} a_\sigma =O(\sigma^{-2}),
\end{equation}
or equivalently
\begin{equation}
 e^{-i \sigma \phi}   \ww e^{i \sigma \phi} a_\sigma =O(\sigma^{-2}).
\end{equation}
We start by calculating
\begin{align}
  e^{-i \sigma \phi}   \ww e^{i \sigma \phi} a_0
  &=(\Box+Q)a_0+2i\sigma\left( \partial_t \phi \partial_t -\nabla_x \phi \cdot \nabla_x +\frac{1}{2} \Box \phi \right)a_0-\sigma^2 \left(|\partial_t \phi|^2-  |\nabla_x \phi|^2\right)a_0 \\
  &\hspace{4mm} +2 i \sigma (A \cdot \nabla \phi) a_0+2 A \cdot \nabla a_0,
\end{align}
where the $j$-th component of the vector $\nabla_x \phi \cdot \nabla a_0 \in \C^N$ is given by $\nabla_x \phi \cdot \nabla_x a_0^j$. 

This forces in particular the following eikonal equation for the phase $\phi$ 
\begin{equation}
   |\partial_t \phi|^2- |\nabla_x \phi|^2=0, 
\end{equation}
and we can take $\phi=t-x^1$. With this choice for $\phi$ and looking at the terms multiplied by $\sigma$ above we obtain for $a_0$
\begin{equation}
    (\partial_t+\partial_{x^1}+A \cdot \nabla \phi)a_0=0,
\end{equation}
which after the change of variables
\begin{equation}
    s=\frac{t+x^1}{2}, \quad r=\frac{t-x^1}{2},
\end{equation}
becomes
\begin{equation}
\label{eq_for_transport}
            \p_sa_0- (A \cdot  \dot \gamma )a_0=0.
\end{equation}
Note that in $s,r$ coordinates we have $\gamma(s)=(s,0)$. Here, $\cdot$ denotes the pairing between vectors and covectors. We fix $x_0 \in \C^N\backslash\{0\}$ and consider a cut-off $\chi =\chi(y)\in C^\infty_0(\Sigma)$, $\chi(0)=1$, with $\Sigma$ the hyperplane normal to $\gamma$ through the origin. We then chose
\begin{equation}
    a_0(s)=P_{A, \gamma}(s) \chi(y) x_0,
\end{equation}
where $P_{A, \gamma}$ is defined in~\eqref{def_transport_minkowski}. In particular, we have $a_0(0)=x_0 \chi(y)$.
With this choice for $a_0$ we expand the terms in $e^{-i \sigma \phi}   \ww e^{i \sigma \phi} a_\sigma$ to find
\begin{align}
    e^{-i \sigma \phi}   \ww e^{i \sigma \phi} a_\sigma &=\Box_Q a_0+2 A \cdot \nabla a_0+\sigma^{-1}\Box_Qa_1+2i (\p_t+\p_{x_1})a_1+2i (A\cdot \nabla \phi) a_1\\
    &\hspace{4mm}+2\sigma^{-1} A \cdot \nabla a_1 + \sigma^{-2}\Box_Qa_2+2i \sigma^{-1} (\p_t+\p_ {x_1})a_2+2i \sigma^{-1}(A\cdot \nabla \phi) a_2\\
    &\hspace{4mm}+2\sigma^{-2} A \cdot \nabla a_2,
\end{align}
with $\Box_Q=\Box+Q$. The above quantity is in $O(\sigma^{-2})$ whenever $a_1$ and $a_2$ satisfy the following equations
 \begin{align}
     \p_s a_j- (A \cdot \dot\gamma )a_j=\frac{i }{2}\Box_Q a_{j-1}+i A \cdot \nabla a_{j-1}, \quad j \in \{1,2\}.
 \end{align}
 We can moreover impose the initial condition
 \begin{equation}
     a_j(0)=0, \quad j \in \{1,2\}.
 \end{equation}
Let us now consider the solution $b_1$ of the ODE
\begin{equation}
    \p_s b_1- (A \cdot \dot\gamma )b_1=\frac{i }{2}\Box a_{0}+i A \cdot \nabla a_{0},\quad b_1(0)=0.
\end{equation}
We then define 
\begin{equation}
\label{def_of_c1}
c_1=\frac{2}{i}(a_1-b_1),
\end{equation}
and observe that $c_1$ satisfies the equation
\begin{equation}
\label{eq_for_c1}
    \p_s c_1- (A \cdot \dot \gamma)c_1=Qa_0.
\end{equation}
Since $a_1(0)=0$ we also have $c_1(0)=0$. We now remark that the solution of~\eqref{eq_for_c1} satisfying $c_1(0)=0$ can be written as
\begin{equation}
\label{expression_for_c1}
    c_1(s)=P_{A,\gamma}(s)\int_0^s P_{A,\gamma}(s^\prime)^{-1} Q(\gamma(s^\prime))P_{A,\gamma}(s^\prime)ds^\prime a_0(0).
\end{equation}
Indeed, the derivative of the right hand side of~\eqref{expression_for_c1} with respect to $s$ is given by
\begin{align}
 &\left(\frac{d}{ds} P_{A,\gamma}(s)\right)\int_0^s P_{A,\gamma}(s^\prime)^{-1} Q(\gamma(s^\prime))P_{A,\gamma}(s^\prime)ds^\prime a_0(0) + P_{A,\gamma}(s)  P_{A,\gamma}(s)^{-1} Q(\gamma(s))P_{A,\gamma}(s) a_0(0)\\
 &=(A \cdot \dot \gamma)P_{A,\gamma}(s)\int_0^s P_{A,\gamma}(s^\prime)^{-1} Q(\gamma(s^\prime))P_{A,\gamma}(s^\prime)ds^\prime a_0(0) +Q(\gamma(s))P_{A,\gamma}(s) a_0(0)\\
 &=(A \cdot \dot \gamma)c_1+Q(\gamma(s))a_0,
\end{align}
thanks to the definitions of $P_{A,\gamma}$ and $a_0$.

The last step for the proof of Proposition~\ref{prop_minkow_reduction} is to show that the knowledge of the source to solution map $R$ allows to recover $c_1(\ell)$. If this is the case we also recover $P_{A,\gamma}(\ell)^{-1}c_1(\ell)$ and hence
\begin{equation}
    \int_0^\ell P_{A,\gamma}(s^\prime)^{-1} Q(\gamma(s^\prime))P_{A,\gamma}(s^\prime)ds^\prime x_0 \chi(y).
\end{equation}

We observe that $b_1$ is independent of $Q$. Consequently, recalling the definition of $c_1$ in~\eqref{def_of_c1} it suffices to prove that $R$ determines $a_1(\ell)$. This is the following lemma.

\begin{lemma}
\label{lem_minkow_Rdetermines_sol}
Let $\gamma : [0, \ell] \to (0,T) \times \R^n$ be a null geodesic and assume that $\gamma(0), \gamma (\ell ) \notin \Omega$. Then, the source to solution map $R$ uniquely determines $a_1(\ell)$.
\end{lemma}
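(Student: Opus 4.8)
The plan is to realize the subleading beam amplitude at the exit point as a limit of quantities that $R$ measures directly. The obstruction to feeding $v_\sigma := e^{i\sigma\phi}a_\sigma$ into $R$ is twofold: $v_\sigma$ solves the wave equation only up to $O(\sigma^{-2})$, and it has neither vanishing Cauchy data at $t=0$ nor a source supported away from $\Omega$. I would remove both defects with a single cutoff in time. Let $t_0 \in (0,T)$ be the time component of $\gamma(0)$, and fix a smooth $\rho = \rho(t)$ with $\rho \equiv 0$ for $t \le t_1$ and $\rho \equiv 1$ for $t \ge t_2$, where $t_0 < t_1 < t_2$ are chosen so small that on $[t_1,t_2]$ the beam has not yet reached $\Omega$. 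Since $\gamma(0)\notin\Omega$ and the transverse support of the beam is controlled by $\chi$, this is possible once $\supp\chi$ is sufficiently small.

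Set $w_\sigma := \rho\, v_\sigma$. By construction $w_\sigma$ has vanishing Cauchy data at $t=0$, and since $\rho$ depends only on $t$ and $[Q,\rho]=0$,
\[
\ww w_\sigma = \rho\, \ww v_\sigma + [\ww,\rho]v_\sigma = \rho\, r_\sigma + f_\sigma, \qquad f_\sigma := (\partial_t^2\rho)\,v_\sigma + 2(\partial_t\rho)(\partial_t + A_0)v_\sigma,
\]
where $r_\sigma := \ww v_\sigma$. The commutator source $f_\sigma$ is supported in the time window $[t_1,t_2]$ and in the thin spatial tube around $\gamma$, hence $f_\sigma \in C^\infty_0((0,T)\times(\R^n\setminus\Omega))$ is an admissible source and $u^{f_\sigma}_{|x\notin\Omega} = Rf_\sigma$ is determined by $R$.

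Next I would compare $u_\sigma := u^{f_\sigma}$ with $w_\sigma$. Their difference $e_\sigma := u_\sigma - w_\sigma$ has vanishing Cauchy data and solves $\ww e_\sigma = -\rho\, r_\sigma$. From the amplitude construction the remainder is $r_\sigma = \sigma^{-2}e^{i\sigma\phi}(\Box_Q a_2 + 2A\cdot\nabla a_2)$, which is uniformly compactly supported with $\|r_\sigma\|_{L^2} = O(\sigma^{-2})$, so the standard energy estimate for $\ww$ gives $\|e_\sigma\|_{L^2((0,T)\times\R^n)} = O(\sigma^{-2})$. Fix a spacetime neighborhood $U$ of $\gamma(\ell)$ contained in $\{x\notin\Omega\}\cap\{t\ge t_2\}$, so that $\rho\equiv 1$ and $u_\sigma = v_\sigma + e_\sigma$ there. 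Then $R$ determines $e^{-i\sigma\phi}u_\sigma = a_0 + \sigma^{-1}a_1 + \sigma^{-2}a_2 + e^{-i\sigma\phi}e_\sigma$ on $U$ for every $\sigma$, and the $L^2(U)$ limits
\[
a_0 = \lim_{\sigma\to\infty} e^{-i\sigma\phi}u_\sigma, \qquad a_1 = \lim_{\sigma\to\infty}\sigma\big(e^{-i\sigma\phi}u_\sigma - a_0\big)
\]
exist, since $\|\sigma\, e^{-i\sigma\phi}e_\sigma\|_{L^2(U)} = O(\sigma^{-1})\to 0$. Thus $R$ determines $a_1$ as an element of $L^2(U)$; being smooth, $a_1$ is pinned down pointwise, and in particular $a_1(\gamma(\ell))$ is recovered.

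The step I expect to be most delicate is the choice of the cutoff $\rho$: its transition region must simultaneously produce vanishing initial data (which forces $t_1>0$) and a source $f_\sigma$ supported strictly outside $\Omega$ (which forces the beam to remain outside $\Omega$ throughout $[t_1,t_2]$). Both are arranged by exploiting that $\gamma(0)\notin\Omega$ together with finite speed of propagation and the freedom to shrink the transverse support of the beam; the remaining energy estimate and the $L^2$ extraction of the subleading amplitude are then routine.
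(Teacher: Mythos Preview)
Your argument is essentially the paper's own: your $\rho$ is the paper's $1-\tchi$, your $u_\sigma$ is the paper's $w$, and your $e_\sigma$ is the paper's correction $r_\sigma$. The structure---cut off the beam in time to manufacture an admissible source, solve exactly, and peel off the subleading amplitude in the $\sigma\to\infty$ limit via the $O(\sigma^{-2})$ energy estimate---matches line for line.

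One point should be made explicit. You verify that $f_\sigma$ is supported outside $\Omega$, hence is an \emph{admissible} input for $R$, but you never say why $f_\sigma$ is \emph{known}, i.e.\ independent of $Q$. This is not automatic: $f_\sigma$ involves $v_\sigma$ and hence the amplitudes $a_1,a_2$, which are built from transport equations whose right-hand sides contain $\Box_Q a_{j-1}=\Box a_{j-1}+Qa_{j-1}$. The reason it works is that these transport equations carry zero initial data at $\gamma(0)$ and, on the segment of the tube up to time $t_2$, the term $Qa_{j-1}$ vanishes since the tube has not yet met $\supp Q\subset\Omega$. Thus the restriction of $a_\sigma$ to $\supp\rho'$ is the same for any $Q$, and $f_\sigma$ is determined by $A$ and $\gamma$ alone. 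The paper states exactly this (``$[\tchi,\ww]e^{i\sigma\phi}a_\sigma$ is \dots independent of $Q$''); without it, the assertion ``$Rf_\sigma$ is determined by $R$'' is circular. Once you insert this observation, your proof is complete. (The preliminary extraction of $a_0$ as a limit is harmless but unnecessary: $a_0$ is already $Q$-independent by construction, which is how the paper handles it.)
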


\begin{proof}
We consider a cut-off $\tchi= \tchi(t) \in C^\infty(\R )$ such that $\tchi=1$ for $t<0$ and $\tchi=0$ for $t>\eta$ with $\eta>0$ small. We define then $r_\sigma$ as the solution to the following wave equation
\begin{equation}
\begin{cases}
    \ww r_\sigma =-(1-\tchi) \ww e^{i \sigma \phi} a_\sigma & \textnormal{in} \: (0,T) \times \R^n,\\
    r_{\sigma|t=0}= \p_t u_{\sigma|t=0}=0  & \textnormal{in} \: \R^n.
    \end{cases}
\end{equation}
We define $w=(1-\tchi) e^{i \sigma \phi}a_\sigma+ r_\sigma$ with $a_\sigma$ as in~\eqref{def_of_a} with the choices of $a_j$ described above. Then $w$ satisfies  
\begin{equation}
\begin{cases}
    \ww w =[\tchi, \ww] e^{i \sigma \phi} a_\sigma & \textnormal{in} \: (0,T) \times  \R^n,\\
     w_{|t=0}= \p_t w_{|t=0}=0  & \textnormal{in} \: \R^n.
    \end{cases}
\end{equation}
The important thing about the term $[\tchi, \ww]$ is that it is a differential operator with coefficients supported on $\supp (\tchi ^\prime)$, that is on the strip $0<t< \eta$. The construction of $a_j, j \in \{0,1,2\}$ implies that they are supported close to $\gamma$, hence $a_\sigma$ is supported close to $\gamma$ as well. As a consequence, up to choosing the supports of $\tchi^\prime$ and $\chi$ small enough we have that $[\tchi, \ww] e^{i \sigma \phi} a_\sigma$ is supported close to $\gamma (0)$. The assumption $\gamma(0) \notin \Omega$ combined with the fact that $Q$ is supported in $\Omega$ gives then $[\tchi, \ww]a_\sigma= [\tchi, \Box_{A,0}]a_\sigma$ and we see that the term $[\tchi, \ww] e^{i \sigma \phi} a_\sigma$ is a smooth function supported away from $\Omega$ and independent of $Q$. As a consequence the source to solution map $R$ determines $w_{| x \notin \Omega}$, and therefore $R$ uniquely determines
\begin{align}
    \sigma e^{- i \sigma \phi } (1- \tchi)e^{i \sigma \phi}(a_0+\sigma^{-1}a_1+\sigma^{-2}a_2)+ \sigma e^{- i\sigma \phi } r_\sigma,
\end{align}
for $x \notin \Omega$. Since $\ww e^{i \sigma \phi}a_\sigma=O(\sigma^{-2})$ energy estimates for the wave equation imply that $r_\sigma(t)= O(\sigma^{-2})$ in $H^1$. Recalling that $a_0$ is independent of $Q$ we get that for any $\psi \in C^\infty_0 ( (0,T) \times (\R^n \backslash \Omega))$ the operator $R$ determines
\begin{equation}
    \left((1- \tchi)(a_1+ \sigma^{-1}a_2)+\sigma e^{- i \sigma \phi}r_\sigma , \psi \right)_{L^2((0,T)   \times (\R^n \backslash \Omega))} \to \left((1- \tchi)a_1, \psi \right)_{L^2((0,T)   \times (\R^n \backslash \Omega))},
\end{equation}
as $\sigma \to \infty$. Assuming that $\gamma (\ell) \notin \Omega$ this proves that $R$ determines $a_1 (\ell)$ as stated.
\end{proof}

Varying the choices of $x_0$ along a basis of $\C^N$ completes the proof of Proposition~\ref{prop_minkow_reduction}.

\subsection{The case of globally hyperbolic geometries}
\label{sec_reduction_glob_hyper}
We now turn our attention to the case of globally hyperbolic Lorentzian geometries. Notice that by Property (3) of Theorem~\ref{prop_of_globhyper} we have that $(\M ,\g)$ is isometric to $(\R \times \Sigma, \g)$ where
\begin{equation}
    \g=c(t,x)(-dt^2+h_t(x)), \quad
    t \in \R,  \: x \in \Sigma=\tau^{-1}(0),
\end{equation}
and $\tau$ is the temporal function of Theorem~\ref{prop_of_globhyper}. Here $c$ is a smooth positive function and $h_t$ is a family of Riemannian metrics on $\Sigma$, smoothly depending on $t$.

Similarly to the Minkowski case, we want to construct an approximate solution to the equation $\bg u =0$ which concentrates close to a null geodesic $\gamma$. The main difficulty in this general geometric setting is that geometric optics solutions, that is approximate solutions of the form $e^{i \sigma \phi }a$ with \textit{real valued} phase $\phi$, do not always exist. However, if one allows the phase to be \textit{complex valued} one can get an adequate replacement of geometric optics solutions: the Gaussian beams for the wave equation. Their construction is classical and was introduced in~\cite{ralston1982gaussian, babich1984complex}. In the context of inverse problems they appear first time in~\cite{belishev1992boundary}.

In~\cite{feizmohammadi2022}, they are constructed for a scalar wave equation in globally hyperbolic Lorentzian manifolds. Our exposition follows that of~\cite[Section 4]{feizmohammadi2022}. We start by recalling the basic properties of the~\textit{Fermi coordinates}, near a null geodesic $\gamma$. For a proof of the following lemma we refer to~\cite[Lemma 1]{feizmohammadi2022}. 
\begin{lemma}[Fermi coordinates]
\label{lem_fermi}
Let $\delta >0$, $a<b$ and $\gamma : [a-\delta, b+ \delta] \to \M$ be a null geodesic on $\M$. There is a coordinate neighborhood $(U, \Phi)$ of $\gamma([a,b])$, with coordinates denoted by $(z^0=s,z^1,...,z^n)$, such that:
\begin{itemize}
    \item $\Phi(U)=(a-\delta^\prime,b+\delta^\prime)\times B(0, \delta^\prime)$, where $B(0, \delta^\prime)$ denotes a ball in $\R^n$ with a small radius $\delta^\prime$.

    \item  $\Phi(\gamma(s))=(s,0..,0)$.
\end{itemize}
Moreover, the metric tensor $\g$ satisfies in this coordinate system
\begin{equation}
\label{metric_in_fermi}
    \g_{|\gamma}=2 ds \otimes dz^1+\sum_{j=2}^n dz^j \otimes dz^j,
\end{equation}
and ${\p_i {\g_{jk}}}_{|\gamma}=0$ for $i,j,k=0,...,n$ where $|_{\gamma}$ denotes the restriction on $\gamma$.    
\end{lemma}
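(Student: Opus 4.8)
The plan is to construct the chart explicitly from a parallel null frame along $\gamma$ and then verify the three stated properties, using the exponential map to control the transverse behaviour and parallel transport to control the behaviour along $\gamma$. First I would fix the frame. At the initial point $\gamma(a-\delta)$ I would take $E_0=\dot\gamma$ (null, since $\gamma$ is a null geodesic), together with a second null vector $E_1$ normalized by $\g(E_0,E_1)=1$ and spacelike vectors $E_2,\dots,E_n$ forming an orthonormal basis of the spacelike complement of $\mathrm{span}\{E_0,E_1\}$; such a null frame exists by elementary Lorentzian linear algebra. I would then parallel transport $E_0,\dots,E_n$ along $\gamma$. Because $\gamma$ is a geodesic the transported $E_0$ stays equal to $\dot\gamma$, and because parallel transport is a linear isometry the pairings $\g(E_i,E_j)$ stay constant in $s$, keeping the null-frame values $\g(E_0,E_1)=1$, $\g(E_j,E_k)=\delta_{jk}$ for $j,k\geq 2$, and all others zero.

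Next I would define the parametrization
\[
\Phi^{-1}(s,z^1,\dots,z^n)=\exp_{\gamma(s)}\Big(\sum_{i=1}^n z^i E_i(s)\Big).
\]
Along $\gamma$ (that is, $z=0$) its differential sends $\partial_s\mapsto\dot\gamma=E_0$ and $\partial_{z^i}\mapsto E_i$, so it is invertible at each point of $\gamma([a,b])$; the inverse function theorem together with compactness of $\gamma([a,b])$ then produces a uniform tubular neighborhood of the form $(a-\delta',b+\delta')\times B(0,\delta')$, giving the first two bullet points. Moreover, on $\gamma$ the coordinate vector fields are exactly the frame, $\partial_{z^0}=E_0$ and $\partial_{z^i}=E_i$, so $\g_{ij}|_\gamma=\g(E_i,E_j)$ reproduces $2\,ds\otimes dz^1+\sum_{j\geq 2}dz^j\otimes dz^j$.

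The remaining and principal point is the vanishing of all first derivatives $\partial_i\g_{jk}|_\gamma$, which I would deduce from the vanishing of every Christoffel symbol on $\gamma$ via metric compatibility $\partial_i\g_{jk}=\g_{lk}\Gamma^l_{ij}+\g_{jl}\Gamma^l_{ik}$. For the purely transverse symbols I would use that, for fixed $s$, the radial curves $t\mapsto(s,tz^1,\dots,tz^n)$ are geodesics by construction of the exponential map; inserting their constant velocity into the geodesic equation and evaluating at $z=0$ forces $\sum_{i,j=1}^n\Gamma^l_{ij}|_\gamma\,z^iz^j=0$ for every $z$, hence $\Gamma^l_{ij}|_\gamma=0$ for $i,j\in\{1,\dots,n\}$ by polarization. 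For the symbols carrying the $s$-index I would use parallelism of the frame: since $\partial_{z^i}|_\gamma=E_i$ and covariant differentiation along $\gamma$ sees only these values, $\nabla_{\partial_s}\partial_{z^i}|_\gamma=\nabla_{\dot\gamma}E_i=0$, i.e.\ $\Gamma^l_{0i}|_\gamma=0$ for all $i$ (the case $i=0$ being the geodesic equation for $\gamma$ itself). These two families of identities cover all index combinations, so every $\Gamma^l_{ij}|_\gamma=0$ and therefore $\partial_i\g_{jk}|_\gamma=0$.

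I expect the main obstacle to be conceptual rather than computational. The normal bundle of a null geodesic is degenerate, since it contains $\dot\gamma$ itself, so the classical Fermi construction for spacelike or timelike geodesics does not apply verbatim and the null frame must be introduced by hand. The subtle point in the verification is that the exponential map only controls the \emph{transverse} derivatives of the metric on $\gamma$, whereas the vanishing of the \emph{longitudinal} derivatives $\partial_s\g_{jk}|_\gamma$ is a genuinely separate fact that relies on having parallel transported the frame along $\gamma$; it is the interplay of these two inputs that yields simultaneous vanishing of all $\Gamma^l_{ij}|_\gamma$.
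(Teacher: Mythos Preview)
The paper does not give its own proof of this lemma: it simply writes ``For a proof of the following lemma we refer to [Lemma~1]{feizmohammadi2022}'' and moves on. Your construction---parallel-transport a null frame $E_0=\dot\gamma,E_1,\dots,E_n$ along $\gamma$ and exponentiate in the transverse directions---is exactly the standard Fermi-coordinate construction one finds in that reference, and your verification that all $\Gamma^l_{ij}|_\gamma$ vanish (transverse symbols from the radial-geodesic property of $\exp$, longitudinal symbols from parallelism of the frame) is correct and well organized.

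One point worth flagging, which is not a flaw in your argument but in the formulation of the lemma itself: a null geodesic in a globally hyperbolic manifold need not be an embedded curve, so strictly speaking there may be no single coordinate chart $(U,\Phi)$ containing $\gamma([a,b])$ when $\gamma$ self-intersects. Your compactness-plus-inverse-function-theorem step yields a local diffeomorphism from the tube, but global injectivity can fail. In the cited reference (and in the use made of the lemma for Gaussian beams) one works on the abstract parameter space $(a-\delta',b+\delta')\times B(0,\delta')$ regardless of whether the image embeds, so this does not affect the downstream arguments; it is just a harmless abuse of the word ``coordinate''. Other than this caveat, your proof is complete.
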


In the sequel we shall use the notation $(z^1,z^2,...,z^n)=z^\prime$, for the variables that are transversal to the curve $\gamma$ in the Fermi coordinates.
\bigskip

We now fix a null geodesic $\gamma : [a-\delta, b+ \delta] \to \overline{M}$ and work in the coordinate system described in Lemma~\ref{lem_fermi}. As before we will consider an ansatz of the form $e^{i \sigma \phi} a_\sigma (s, z^\prime)$ but this time $\phi: \M \to \C$ is allowed to take complex values. The phase $\phi$ and the amplitude $a$ will be constructed for $s \in I=[a- \delta^\prime, b+ \delta^\prime]$ and $|z^\prime| < \delta^\prime$ with $\delta^\prime$ small.

We have for the conjugated operator
\begin{equation}
    e^{-i \sigma \phi} \bg e^{i \sigma \phi}a=\sigma^2 (\H \phi) a-i \sigma \T a+ \bg a.
\end{equation}
The operators $\H : C^\infty(\M) \to C^\infty(\M)$ and $\T \in C^\infty(\M; \C^N)\to C^\infty(\M; \C^N) $ are given by 
\begin{equation}
    \H \phi= \langle d \phi, d \phi \rangle_{\g}, \quad \T a =2 \langle d \phi , d a\rangle_{\g}-(\Box_{\g} \phi)a-2(A \cdot \nabla^{\g } \phi )a,
\end{equation}
where the $j$-th component of the vector $2 \langle d \phi , d a\rangle_{\g}-(\Box_{\g} \phi)a \in \C^N$ is given by 
\begin{equation}
    2 \langle d \phi , d a^j\rangle_{\g}-(\Box_{\g} \phi)a^j, \quad a=(a^1,...,a^N), \:  j \in \{1,2...,N\}.
\end{equation}
Similarly to the Minkowski case, we want to impose the eikonal equation $\H \phi =0 $ for the phase $\phi$. The important difference is that in this general geometric context this equation does not necessarily have a solution in a neighborhood of the geodesic (in particular in the presence of conjugate points for $\M$, see for instance the discussion in~\cite[Section 3.5]{feizmohammadi2021b} ). To remedy this problem, we will solve the equations only on $\gamma$ and use a Taylor approximation for the transversal directions. More precisely, we shall seek for solutions of the form
\begin{equation}
\label{def_of_phi_a}
  \phi= \sum_{k=0}^m \phi_k (s, z^\prime), \quad a_\sigma= \chi\left(\frac{|z^\prime|}{\delta^\prime}\right)\sum_{k=0}^m \sigma^{-k}a_k(s,z^\prime),  
\end{equation}
with
\begin{itemize}
    \item $\phi_k \in C^\infty(\M, \C)$, $k \in \{0,...,m\}$ complex-valued homogeneous polynomials of degree $k$ with respect to the variables $z^\prime=(z^1,...,z^n)$.

    \item $a_k= \sum_{l=0}^m a_{k,l} $ with $a_{k,l} \in C^\infty(\M;\C^N)$ such that for $j \in \{1,...,N\}$ and \break $k,l \in \{0,...,m\}$ the component $a^j_{k,l}$ is a complex-valued homogeneous polynomial of degree $l$ with respect to the variables $z^\prime$.
 \end{itemize}
The cut-off $\chi \in C^\infty_0(\R)$ satisfies $\chi(0)=1$ and localizes close to $\gamma$ for $\delta^\prime$ small. We impose the following equation for $\phi$ on $\gamma$
\begin{equation}
\label{eq_for_phi}
    \frac{\p^\alpha}{\p{z^\prime}^\alpha}(\H \phi)(s,0)=0, \quad \forall s \in I,
\end{equation}
for all multi-indices $\alpha$ with $|\alpha| \leq m$. Note that we write $(s,0)$ for the point $(s,z^\prime)$ with $z^\prime=0$. For the main amplitude $a_0$ we require that
\begin{equation}
\label{eq_for_a0}
     \frac{\p^\alpha}{\p{z^\prime}^\alpha}(\T a_0)(s,0)=0, \quad \forall s \in I,
\end{equation}
and for the next amplitudes $a_k$ with $k \in \{1,...,m\}$
\begin{equation}
\label{eq_for_ak}
     \frac{\p^\alpha}{\p{z^\prime}^\alpha}(-i \T a_k + \bg a_{k-1})(s,0)=0, \quad \forall s \in I,
\end{equation}
for all multi-indices $\alpha$ with $|\alpha| \leq m$. We then have the following definition.
\begin{definition}
    An approximate Gaussian beam of order $m$ along $\gamma$ is a function $e^{i \sigma \phi} a_\sigma \in C^\infty(\M;\C^N)$ with $\phi$ and $a_\sigma$ as in~\eqref{def_of_phi_a} satisfying additionally
    \begin{enumerate}
        \item Equations~\eqref{eq_for_phi}, \eqref{eq_for_a0} and \eqref{eq_for_ak} hold.

        \item $\Im(\phi)_{|\gamma}=0$, that is the imaginary part of $\phi$ vanishes along $\gamma$.

       \item
       \label{cond_phi}
       $\Im (\phi)(z) \geq C |z^\prime|^2$ for all points $(s, z^\prime) \in \M$ with $s \in I$ and $|z^\prime| < \delta^\prime$.
    \end{enumerate}
\end{definition}
The key point is that condition~\eqref{cond_phi} allows to get exponential decay and by a Taylor expansion one can prove (see~\cite[Lemma 2]{feizmohammadi2022} for a proof) that $e^{i \sigma \phi} a_\sigma$ is indeed an approximate solution.
\begin{lemma}
\label{lem_gauss_beam_decay}
Suppose that $e^{i \sigma \phi} a_\sigma$ is Gaussian beam of order $m$ along the geodesic $\gamma$. Consider $\tau_0, \tau_1 \in \R$ such that $\gamma(a), \gamma(b) \notin [\tau_0,\tau_1] \times \Sigma$. Then for all $\sigma >0$ one has
\begin{equation}
    \Norm{\left(\bg (e^{i \sigma \phi} a_\sigma)\right)^j}_{H^k((\tau_0,\tau_1) \times \Sigma)} \lesssim \sigma^{-K}, \quad j \in \{1,...,N\}.
\end{equation}
with $K=\frac{m+1}{2}+\frac{n}{4}-k-2$. Here $u^j$ denotes $j-th$ component of the vector $u \in C^\infty(\M;\C^N)$. 
\end{lemma}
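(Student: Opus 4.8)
The plan is to bound the residual $\bg(e^{i \sigma \phi} a_\sigma)$ by writing it as $e^{i\sigma\phi}$ times the conjugated expression and exploiting the two mechanisms built into the construction: algebraic vanishing to high transversal order along $\gamma$, and the Gaussian localization furnished by condition~\eqref{cond_phi}. First I would substitute the ansatz~\eqref{def_of_phi_a} into the conjugation identity
\begin{equation}
    e^{-i \sigma \phi} \bg e^{i \sigma \phi}a=\sigma^2 (\H \phi) a-i \sigma \T a+ \bg a,
\end{equation}
temporarily dropping the cutoff and writing $\tilde a_\sigma = \sum_{k=0}^m \sigma^{-k} a_k$. Collecting powers of $\sigma$, the coefficient of $\sigma^{2-r}$ for $0 \le r \le m+1$ is precisely one of the combinations governed by~\eqref{eq_for_phi}, \eqref{eq_for_a0}, \eqref{eq_for_ak}, namely $(\H\phi)a_0$ when $r=0$, $(\H\phi)a_1 - i\T a_0$ when $r=1$, and $(\H\phi)a_r - i\T a_{r-1} + \bg a_{r-2}$ for $2 \le r \le m+1$ (with out-of-range amplitudes understood to be absent). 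By construction each of these has all $z'$-derivatives up to order $m$ vanishing along $\gamma$, so Taylor's theorem gives that it is $\order{|z'|^{m+1}}$ uniformly for $s \in I$. The only leftover is the bottom term, the coefficient $\bg a_m$ of $\sigma^{-m}$, which is smooth but merely $\order{1}$ transversally while carrying the small prefactor $\sigma^{-m}$.

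Next I would dispose of the cutoff. Since $a_\sigma = \chi\, \tilde a_\sigma$ and $\bg$ is second order, one has $\bg(e^{i\sigma\phi}\chi\tilde a_\sigma) = \chi\,\bg(e^{i\sigma\phi}\tilde a_\sigma) + [\bg,\chi](e^{i\sigma\phi}\tilde a_\sigma)$, and the commutator is a first-order operator supported in $\{|z'| \sim \delta'\}$. There condition~\eqref{cond_phi} gives $|e^{i\sigma\phi}| \le e^{-C\sigma|z'|^2} \le e^{-c\sigma}$, so the commutator contributes an error smaller than any power of $\sigma$. The hypothesis $\gamma(a),\gamma(b)\notin[\tau_0,\tau_1]\times\Sigma$ guarantees that on the slab the restriction sees only an interior portion of the beam, so no contribution arises from the edges of the $s$-support.

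For the core estimate I would use $|e^{i\sigma\phi}| \le e^{-C\sigma|z'|^2}$ together with the rescaling $z' = \sigma^{-1/2}w$, under which
\begin{equation}
    \int_{|z'| < \delta'} |z'|^{2\ell}\, e^{-2C\sigma|z'|^2}\, dz' = \order{\sigma^{-\ell - n/2}}.
\end{equation}
Consider a designed term $\sigma^{2-r} g$ with $g = \order{|z'|^{m+1}}$ and $0 \le r \le m+1$. Taking up to $k$ derivatives for the $H^k$ norm, if $j$ of them fall on the phase they each produce $i\sigma\,\p\phi$ (a bounded factor times $\sigma$), while the remaining $k-j$ fall on $g$, lowering its vanishing order to $|z'|^{m+1-(k-j)}$; the resulting $L^2$ norm squared is then of order $\sigma^{2(2-r+j)}\sigma^{-(m+1-(k-j)) - n/2}$, whose exponent $3 - 2r + j + k - m - n/2$ is largest for $r=0$ and $j=k$. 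This worst case gives $L^2$ norm of order $\sigma^{2+k-(m+1)/2-n/4} = \sigma^{-K}$, whereas the bottom remainder $\sigma^{-m}\bg a_m$ contributes only order $\sigma^{k-m-n/4}$, which is strictly smaller. Since the whole argument is carried out componentwise in $\C^N$ and the coefficients of $A$ and $Q$ are smooth and bounded, summing the finitely many terms yields the claim.

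The main obstacle is the bookkeeping in the last step: one must verify that among all ways of distributing the $k$ derivatives of the $H^k$ norm the dominant one is ``all derivatives onto $e^{i\sigma\phi}$'', and that this, combined with the top eikonal remainder $\sigma^2(\H\phi)a_0$, reproduces exactly $K = \frac{m+1}{2} + \frac{n}{4} - k - 2$ and dominates both the lower designed terms and the bottom remainder. This is the matrix-valued, globally hyperbolic analogue of the standard scalar Gaussian beam estimate (cf.~\cite[Lemma 2]{feizmohammadi2022}); the connection and potential terms enter only through smooth bounded coefficients and do not alter the powers of $\sigma$.
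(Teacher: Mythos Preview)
Your argument is correct and is exactly the standard Gaussian beam estimate that the paper itself defers to rather than proves: the paper simply cites \cite[Lemma~2]{feizmohammadi2022} for this lemma, and your sketch reproduces that proof (Taylor expansion of the designed residuals to order $|z'|^{m+1}$, Gaussian localization from $\Im\phi \ge C|z'|^2$, rescaling $z'=\sigma^{-1/2}w$, and the bookkeeping showing the $r=0$, $j=k$ term dominates). The vector-valued nature and the connection/potential terms indeed only enter through bounded smooth coefficients and do not affect the exponent count.
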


\bigskip

Equation~\eqref{eq_for_phi} for the phase $\phi$ is exactly the same as in the scalar case and does not involve the connection term $A$. Consequently, we can use the same construction as in~\cite[Section 4.2.1]{feizmohammadi2022}. In particular, the important terms $\phi_0$ and $\phi_1$ will be chosen as
\begin{equation}
    \label{phi0_and_phi1}
    \phi_0=0, \quad \phi_1=z_1.
\end{equation}
It turns out that the terms $\p^\alpha_{z^\prime} \phi_2$ for $\alpha=2$ satisfy a (non linear) Riccati equation which admits a unique solution. Subsequent terms $\phi_j, j \geq 3$ satisfy systems of linear ODE's with coefficients depending on $\phi_l, \: l\leq j-1$. 

With $\phi$ already constructed we now turn our attention to the amplitude function $a_\sigma$. For $|\alpha| =0$, equation~\eqref{eq_for_a0} gives on $\gamma$
\begin{equation}
\label{eq_ongamma1}
2 \sum_{k,l=0}^n g^{kl}\p_{z^k} \phi \p_{z^l} a_0-(\Box_{\g} \phi)a_0-2 (A \cdot \nabla^{\g} \phi) a_0=0, \quad \forall s \in I.
\end{equation}
Here we used the fact that since $\nabla^{\g} \phi= \sum_{k,l=0}^n g^{kl} \p_{z^k} \phi \frac{\p}{\p z^l}$, thanks to~\eqref{metric_in_fermi} we have on $\gamma$ that 
\begin{equation}
  \nabla^{\g} \phi=\p_s \phi \frac{\p}{\p z_1}+ \p_{z^1} \phi \frac{\p}{\p s}+\sum_{k=2}^n  \p_{z^k} \phi \frac{\p}{\p z^k}.
\end{equation}
We have that $\p_s \phi_0 =0$ and $\p_s \phi_k$ for $k \geq 1$ vanish too on $\gamma=\{z^\prime=0\}$ since they are homogeneous polynomials of degree $k$ with respect to $z^\prime$. Similarly $\p_{z^k} \phi$ vanish on $\gamma$ for $k \geq 2$ and we finally get that $ \nabla^{\g} \phi= \frac{\p}{ \p s}$ on $\gamma$. Similarly, we find that $2 \sum_{k,l=0}^n g^{kl}\p_{z^k} \phi \p_{z^l} a_0=2 \frac{d}{ds} a_0$ and we see that~\eqref{eq_ongamma1} reduces to
\begin{equation}
\label{eq_ongamma2}
  \frac{d}{ds}  a_{0,0}-\frac{(\Box_{\g} \phi)}{2}a_{0,0}- (A \cdot \dot \gamma) a_{0,0}=0, \quad \forall s \in I.
\end{equation}
Here we also used the fact that since all the components of $a_{0,k}$ are homogeneous polynomials of order $k$ in $z^\prime$ all terms involving $a_{0,k}$ for $k \geq 1$ vanish on~\eqref{eq_ongamma1} in the equation for $a_0$ for $\alpha=0$. We shall denote in the sequel 
\begin{equation}
    \ct = -\frac{(\Box_{\g} \phi)}{2}.
\end{equation}
We fix then $x_0 \in \C^N \backslash \{0\}$ and define $a_{0,0}$ as the unique solution to the system of linear ODE's 
\begin{equation}
\label{eq_for_a0_lorentz}
  \frac{d}{ds}  u+ \ct u - (A \cdot \dot \gamma) u=0, \quad u(\gamma(a))=\chi\left(\frac{|z^\prime|}{\delta^\prime}\right)x_0,
\end{equation}
We can then construct the terms $a_{0,k}$ with $k \geq 1$ inductively by solving systems of linear ODE's with coefficients depending on $a_{0,j}$ with $0 \leq j \leq k-1$. To construct $a_{0,1}$ we consider equation~\eqref{eq_for_a0} for $|\alpha|=1$. This gives the equation
\begin{equation}
\label{eq_for_v01}
    \frac{\p^{\alpha}}{\p z^\prime} \left( 2 \sum_{k,l=0}^n g^{kl}\p_{z^k} \phi \p_{z^l} a_0-(\Box_{\g} \phi)a_0-2 (A \cdot \nabla^{\g} \phi) a_0\right)=0,
\end{equation}
on $\gamma$.
Remark that if $|\beta|=0$ or $|\beta|\geq 2$ then $\p^\beta_{z^\prime} a_{0,1}=0$ on $\gamma$ since all the components of $a_{0,1}$ are homogeneous polynomials of order $1$ with respect to $z^\prime$. As a consequence, the terms involving $a_{0,1}$ in the equation above are those of the form  $\p^{\beta}_{z^\prime} a_0$ for $|\beta|=1$ and the terms of the form $\p^\alpha_{z^\prime}\p_s a_0$ with $|\alpha|=1$. Using these observations along with~\eqref{metric_in_fermi}, we see that~\eqref{eq_for_v01} can be written as
\begin{equation}
    \frac{d}{ds} (\p^\alpha_{z^\prime} a_{0})+\sum_{|\beta|=1}\mathcal{A}_{\alpha \beta}\left(\p^\beta_{z^\prime} a_{0} \right)=\mathcal{B}_{\alpha}, \quad \forall s \in I,
\end{equation}
on $\gamma$. The crucial point is that the coefficients $\mathcal{A}_{\alpha \beta}=\mathcal{A}_{\alpha \beta}(s) \in C^\infty( I ;\C)$ and  $\mathcal{B}_\alpha=\mathcal{B}_\alpha(s) \in C^\infty (I; \C^N)$ only depend on $g, \phi, A, Q$ and $a_{0,0}$ which is already constructed. Let us denote by $\mathcal{N}_k$ the cardinality of the set $\{\alpha \textnormal{ such that } |\alpha|=k\}$. By taking all multi-indices $\alpha$ with $|\alpha|=1$ we obtain $N \cdot \mathcal{N}_{1}$  linear ODE's for $\p^\alpha_{z^\prime} a_{0,0}$ with known coefficients. This system of linear ODE's can be uniquely solved by imposing additionally the initial condition $\p^\alpha_{z^\prime} a_{0,0}(\gamma(a))=0$. To construct the subsequent terms $a_{0,k}$ one proceeds in a similar fashion, obtaining a system of linear ODE's for $\p^\alpha_{z^\prime} a_{0,1}$, $|\alpha|=k$, with $N \cdot \mathcal{N}_k$ unknowns and coefficients depending on $g, \phi, A, Q$ and $a_{0,j}$, $j \in \{0,...,k-1\}$.

This concludes the construction of the term $a_0=\sum_{j=1}^m a_{0,j}$. The procedure for constructing the terms $a_{j}$ with $j \geq 1$ follows in a similar fashion by solving equation~\eqref{eq_for_ak}. We will however need more information on the term $a_{1,0}$. We look at~\eqref{eq_for_ak} for $k=1$ and $\alpha = 0$. This gives on $\gamma$ 
\begin{equation}
\label{eq_for_a1}
    \frac{d}{ds} a_{1,0}+\ct a_{1,0}-(A \cdot \dot \gamma)a_{1,0}=-\frac{i}{2}\bg a_{0}.
\end{equation}
We construct then $a_{1,0}$ as the solution to the system of linear ODE's~\eqref{eq_for_a1} satisfying $a_{1,0}(0)=0$. Terms $a_{1,j}, ...a_{k,j}$ are constructed iteratively analogously to the terms $a_{0,j}$. 

We are now ready to prove the analogue of Proposition~\ref{prop_minkow_reduction} in the globally hyperbolic case. First we use the fact the source to solution map $\rr$ uniquely determines $a_{1,0}$ in the points where the geodesic $\gamma$ is not in $K$.

\begin{lemma}
\label{lem_lorent_Rdetermines_sol}
Assume that $\gamma(a), \gamma (b ) \notin K$. Then, the source to solution map $\rr$ uniquely determines $a_{1,0}(\gamma(b))$.
\end{lemma}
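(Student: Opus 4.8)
The plan is to follow the Minkowski argument of Lemma~\ref{lem_minkow_Rdetermines_sol}, with the Gaussian beam $U_\sigma := e^{i\sigma\phi}a_\sigma$ playing the role of the geometric optics solution, and to extract $a_{1,0}$ by evaluating the ($\rr$-determined) solution \emph{on the geodesic}, where the complex phase vanishes. First I would fix a temporal cutoff $\tchi=\tchi(t)$ with $\tchi\equiv1$ for $t\le\tau(\gamma(a))$ and $\tchi\equiv0$ for $t\ge\tau(\gamma(a))+\eta$, $\eta>0$ small, and let $r_\sigma$ solve $\bg r_\sigma=-(1-\tchi)\bg U_\sigma$ with vanishing data in the past. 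Setting $w:=(1-\tchi)U_\sigma+r_\sigma$, a direct computation gives $\bg w=[\tchi,\bg]U_\sigma=:f_\sigma$, and since $w$ vanishes to the past of $\supp(d\tchi)$, uniqueness for the Cauchy problem (Proposition~\ref{prop_wave_lorentz}) identifies $w$ with the forward solution $u^{f_\sigma}$. As $[\tchi,\bg]=[\tchi,\Box_\g]+2A\cdot(\nabla^{\g}\tchi)$ is first order and supported in the thin strip $\supp(d\tchi)$, while $a_\sigma$ is localized in a tube of radius $\delta^\prime$ around $\gamma$, for $\eta$ and $\delta^\prime$ small $f_\sigma$ is supported in a small neighborhood of $\gamma(a)$. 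Since $\gamma(a)\notin K$, there $Q\equiv0$, so both $[\tchi,\bg]$ and the amplitudes in $U_\sigma$ — built by integrating the transport ODEs forward from $\gamma(a)$ — are $Q$-independent there. Hence $f_\sigma\in C^\infty_0(\M\backslash K)$ is an admissible $Q$-independent source and $\rr$ recovers $w|_{\M\backslash K}$.

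Next I would render $r_\sigma$ negligible. Choosing a slab $(\tau_0,\tau_1)\times\Sigma$ with $\tau(\gamma(a))<\tau_0<\tau_1<\tau(\gamma(b))$, Lemma~\ref{lem_gauss_beam_decay} applies (both endpoints are avoided) and gives $\|\bg U_\sigma\|_{H^k}\lesssim\sigma^{-K}$ with $K=K(m,k)$ arbitrarily large once the beam order $m$ is large. Energy estimates for $\bg$ on this slab then yield $\|r_\sigma\|_{H^{k}}\lesssim\sigma^{-K}$, and Sobolev embedding $H^k\hookrightarrow C^0$ for $k>(1+n)/2$ upgrades this to the pointwise bound $|r_\sigma(\gamma(s))|\lesssim\sigma^{-K}$ for $\gamma(s)$ in the slab.

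The extraction then uses that the complex phase vanishes on the geodesic. For $s$ slightly below $b$ one has $\gamma(s)\notin K$ (as $\gamma(b)\notin K$ and $K$ is closed) and $1-\tchi\equiv1$ near $\gamma(s)$, so $w=U_\sigma+r_\sigma$ there. On $\gamma$ we have $\phi\equiv0$, since $\phi_0=0$ and each $\phi_k$, $k\ge1$, is homogeneous of degree $k$ in $z^\prime$; moreover only the degree-zero transversal parts survive at $z^\prime=0$. Thus $w(\gamma(s))=\sum_{k=0}^m\sigma^{-k}a_{k,0}(\gamma(s))+r_\sigma(\gamma(s))$. The leading term $a_{0,0}$ solves~\eqref{eq_for_a0_lorentz}, whose coefficients depend only on $g,\phi,A$, so it is $Q$-independent and known. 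Therefore $\sigma\big(w(\gamma(s))-a_{0,0}(\gamma(s))\big)\to a_{1,0}(\gamma(s))$ as $\sigma\to\infty$, the error $\sigma\big(O(\sigma^{-2})+r_\sigma(\gamma(s))\big)$ vanishing by the previous paragraph. This recovers $a_{1,0}(\gamma(s))$ for all such $s$, and letting $s\to b^-$ yields $a_{1,0}(\gamma(b))$ by smoothness of $a_{1,0}$.

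The main difficulty is the bookkeeping that makes $f_\sigma$ admissible: one has to ensure simultaneously that $[\tchi,\bg]U_\sigma$ is compactly supported away from $K$ and independent of $Q$, which relies on shrinking both the cutoff strip and the beam tube into the region where $\gamma(a)\notin K$ forces $Q\equiv0$, together with the fact that the amplitude ODEs are integrated from $\gamma(a)$. A secondary point is that Lemma~\ref{lem_gauss_beam_decay} controls the beam only on slabs avoiding the endpoints, so the pointwise control of $r_\sigma$, and hence the recovery of $a_{1,0}$, is available only strictly between $\gamma(a)$ and $\gamma(b)$; this is why $a_{1,0}(\gamma(b))$ is obtained as a limit of interior values rather than by evaluation at $\gamma(b)$ itself.
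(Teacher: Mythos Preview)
Your proof is correct and follows essentially the same route as the paper: a temporal cutoff producing a commutator source supported near $\gamma(a)\notin K$ and independent of $Q$, Gaussian beam decay plus energy estimates to kill the remainder $r_\sigma$, and finally a limit $s\to b^-$ to obtain $a_{1,0}(\gamma(b))$. The only difference is cosmetic --- you extract $a_{1,0}$ by pointwise evaluation on $\gamma$ (using $\phi|_\gamma=0$ and Sobolev embedding for $r_\sigma$) whereas the paper pairs against test functions in $L^2$; both variants work.
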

\begin{proof}
    The proof is similar to the one Lemma~\ref{lem_minkow_Rdetermines_sol}. Here we need to work in the coordinates given by Property (3) of Theorem~\ref{prop_of_globhyper}. In these coordinates $\M$ becomes $\R \times \Sigma$ with $\Sigma=\tau^{-1}(0)$ and $\tau$ the temporal function of Theorem~\ref{prop_of_globhyper}. 

    We remark that since $\gamma(a), \gamma (b ) \notin K$ one can choose $\eta>0$ small enough so that $\gamma(a\pm\eta), \gamma(b \pm \eta) \notin K$. With this choice of $\eta$ we pick $\tau_0, \tau_1 \in \R$ satisfying (see Figure~\ref{fig:gamma-curve})
    \begin{equation}
        \tau (\gamma(a))< \tau_0< \tau (\gamma(a+\eta)), \quad \tau(\gamma(b-\eta)) < \tau_1 < \tau(\gamma(b)).
    \end{equation}
We now proceed as in Lemma~\ref{lem_minkow_Rdetermines_sol}. We consider a cut-off $\tchi= \tchi(t) \in C^\infty(\R )$ such that $\tchi=1$ for $t<\tau_0$ and $\tchi=0$ for $t>\eta^\prime$ with $\eta^\prime>0$ small. We define then $r_\sigma$ as the solution to the following wave equation
\begin{equation}
\begin{cases}
   \bg r_\sigma =-(1-\tchi)\bg e^{i \sigma \phi} a_\sigma & \textnormal{in} \: (\tau_0,\tau_1) \times \Sigma,\\
     r_{\sigma|t=0}= \p_t r_{\sigma|t=0}=0  .
    \end{cases}
\end{equation}
We define $w=(1-\tchi) e^{i \sigma \phi}a_\sigma+ r_\sigma$ with $a$ the Gaussian beam constructed $e^{i \sigma \phi}a_\sigma$ with the choices of $a_j$ described above. Then $w$ satisfies  
\begin{equation}
\begin{cases}
   \bg w =[\tchi,\bg] e^{i \sigma \phi} a_\sigma & \textnormal{in} \: (\tau_0,\tau_1) \times  \Sigma,\\
   w_{|t=0}= \p_t w_{|t=0}=0 .
    \end{cases}
\end{equation}
The construction of $a_j$ implies that they are supported close to $\gamma$, hence $a$ is supported close to $\gamma$ as well. For small $\eta^\prime$ we have that $[\tchi,\bg] e^{i \sigma \phi} a_\sigma$ is supported close to $\gamma (a+\eta) \notin K$. Then, $[\tchi,\bg] e^{i \sigma \phi} a_\sigma$ is a smooth function supported away from $K$ and independent of $Q$. This implies that $\rr$ determines $w_{| \M \backslash K}$. Remark that strictly speaking, the source to solution map $\rr$ as defined by Proposition~\ref{prop_wave_lorentz} corresponds to solutions of the wave equation on the whole $\M = \R \times \Sigma$. However, one can uniquely extend the solution $w$ to a solution $\Tilde{w}$ of 
\begin{equation}
\begin{cases}
   \bg \Tilde{w} = \Tilde{f} & \textnormal{in} \: \R \times  \Sigma,\\
   \tilde{w}_{|t=0}= \p_t \tilde{w}_{|t=0}=0  ,
    \end{cases}
\end{equation}
with $\Tilde{f}$ the extension by $0$ of $[\tchi,\bg] e^{i \sigma \phi} a_\sigma$ to $\R \times \Sigma$ and $\tilde{w}$ satisfying $\tilde{w}(t)= w(t), \: t \in (\tau_0,\tau_1) $ and $\tilde{w}(\tau_1)=w(\tau_1)$, $\p_t \tilde{w}(\tau_1)=\p_t w(\tau_1)$. We then see that $\rr$ determines $\tilde{w}_{| \M \backslash K}$ and hence in particular $w_{| \M \backslash K}$.

To conclude, notice that the choice of $\tau_0, \tau_1$ implies that $\gamma(a), \gamma(b) \notin [\tau_0, \tau_1] \times \Sigma$. Hence, we can apply the Gaussian beam decay Lemma~\ref{lem_gauss_beam_decay} and conclude as in Lemma~\ref{lem_lorent_Rdetermines_sol}. We denote $\bd=b-\eta$. Sending $\sigma \to \infty$ and using that $\gamma(\bd) \notin K$ we obtain that $\rr$ determines $a_1(\gamma(\bd))$. Finally, we remark that in the Fermi coordinates $\gamma(\bd)=(\bd,0)$ and therefore $a_{1,j}(\gamma(\bd))=a_{1,j}(
\bd,0)=0$ for $j \geq 1$, which implies $a_{1}(\gamma(\bd))=a_{1,0}(\gamma(\bd))$. The lemma follows by sending $\eta \to 0$.
\end{proof}

The last step to prove the analogue of Proposition~\ref{prop_minkow_reduction} in this general geometric setting is to establish the link between the solutions of~\eqref{eq_for_a0_lorentz}, \eqref{eq_for_a1} and the parallel transport along $\gamma$ for the connection $-A$. As in the Minkowski case we write $P_{A, \gamma}(s)$ for the normalized fundamental matrix solution to the system of ODEs
\begin{equation}
     \p_s P_{A, \gamma}(s)- (A \cdot  \dot \gamma )P_{A, \gamma}(s)=0, \quad s \in I,\quad P_{A, \gamma}(\gamma(a))=\textnormal{Id}.
\end{equation}
We then define $r=r(s)$ by $r(s)=-\int_{\gamma(\alpha)}^{s}\ct(s^\prime)ds^\prime$. The important point is that $r$ is the solution to the ODE
\begin{equation}
\label{prop_of_r}
    \frac{d}{ds}r+ \ct =0 ,\quad s \in I, \quad  r(\gamma(a))=0.
\end{equation}
A straightforward calculation shows that $e^{r(s)} P_{A, \gamma}(s)a_{0,0}(\gamma(a))$ solves the system of ODE's~\eqref{eq_for_a0_lorentz} and therefore
\begin{equation}
\label{expression_for_a00}
    a_{0,0}(s)=e^{r(s)} P_{A, \gamma}(s)a_{0,0}(\gamma(a)).
\end{equation}
With this observation we can now prove the reduction theorem in the general case.
\begin{proof}[Proof of Theorem~\ref{thm_reduction_lorentz}]
  Let $b_1$ be the unique solution to the system of linear ODE's   
    \begin{equation}
    \p_s b_1+\ct b_1- (A \cdot \dot\gamma )b_1=-\frac{i }{2}\Box_{\g} a_{0}-i A \cdot \nabla^{\g} a_{0},\quad s \in I, \quad b_1(\gamma(a))=0.
\end{equation}
This solution $b_1$ is independent of $Q$. As a consequence, combining this with Lemma~\ref{lem_lorent_Rdetermines_sol} we find that $\rr$ determines $\Tilde{c}_1 (\gamma(b))$ with $\Tilde{c}_1=-\frac{2}{i}(a_{1,0}-b_1)$. Observe that $\Tilde{c}_1$ solves
\begin{equation}
    \frac{d}{ds}+ \ct \Tilde{c}_1-(A \cdot \dot\gamma )\Tilde{c}_1=Qa_{0,0}, \quad s\in I, \quad 
    \Tilde{c}_1(\gamma(a))=0,
\end{equation}
where we used the fact on $\gamma$ one has $Qa_0=Qa_{0,0}$. We consider as well $c_1$ solution to
\begin{equation}
    \frac{d}{ds}c_1-(A \cdot \dot\gamma )c_1=e^{-r}Qa_{0,0}, \quad s \in I, \quad c_1(\gamma(a))=0.
\end{equation}
Then using~\eqref{prop_of_r} we immediately get that $\Tilde{c}_1=e^rc_1$, therefore $\rr$ uniquely determines $c_1(\gamma(b))$. But using~\eqref{expression_for_a00} we see that in fact $c_1$ solves 
\begin{equation}
    \frac{d}{ds}c_1-(A \cdot \dot\gamma )c_1= Q P_{A, \gamma}(s)a_{0,0}(\gamma(a)), \quad s \in I, c_1(\gamma(a))=0.
\end{equation}
The solution of this last ODE can be expressed similarly to the Minkwoski case as
\begin{equation}
    c_1(s)=P_{A,\gamma}(s)\int_{\gamma(a)}^{s} P_{A,\gamma}(s^\prime)^{-1} Q(\gamma(s^\prime))P_{A,\gamma}(s^\prime)ds^\prime a_{0,0}(\gamma(a)).
\end{equation}
Since $\rr$ determines $c_1(\gamma(b))$ we get finally that it determines as well
\begin{equation}
    \int_{\gamma(a)}^{\gamma(b)} P_{A,\gamma}(s)^{-1} Q(\gamma(s))P_{A,\gamma}(s)ds,
\end{equation}
for any null geodesic $\gamma : [a-\delta, b+\delta] \to \M$ with $\gamma(a) , \gamma(b) \notin K$. This proves the stated result.  
\end{proof}

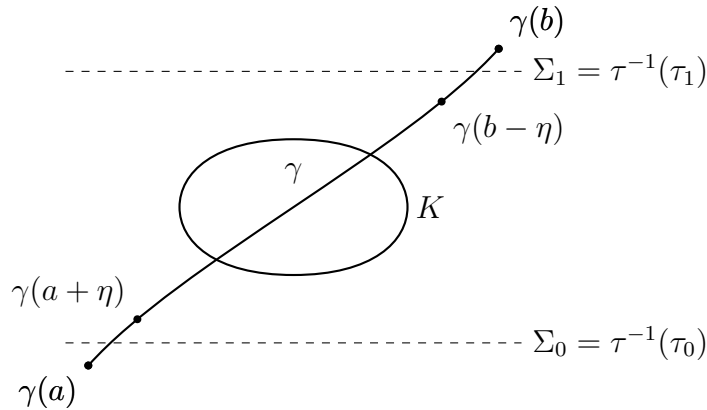
\begin{figure}[htbp]
  \centering
  \begin{tikzpicture}[scale=1.5]
    \draw[dashed] (-2, -1.2) -- (2, -1.2) node[right] {$\Sigma_0= \tau^{-1}(\tau_0)$};
    \draw[dashed] (-2, 1.2) -- (2, 1.2) node[right] {$\Sigma_1=\tau^{-1}(\tau_1)$};

   \draw[thick] 
  (-1.8, -1.4) .. controls (-1, -0.5) and (1, 0.5) .. (1.8, 1.4);

\fill (-1.8, -1.4) circle (1pt) node[below left] {$\gamma(a)$};
\fill (1.8, 1.4) circle (1pt) node[above right] {$\gamma(b)$};

    \node at (0,0.3) {$\gamma$};

    \draw[thick]
      (0,0.6)
        .. controls (0.7,0.6) and (1,0.3) .. (1,0)
        .. controls (1,-0.3) and (0.7,-0.6) .. (0,-0.6)
        .. controls (-0.7,-0.6) and (-1,-0.3) .. (-1,0)
        .. controls (-1,0.3) and (-0.7,0.6) .. (0,0.6);

    \node at (1.2, 0) {$K$};

\fill (-1.8, -1.4) circle (1pt) node[below left] {$\gamma(a)$};
\fill (1.8, 1.4) circle (1pt) node[above right] {$\gamma(b)$};

\draw[thick, decorate, 
    decoration={
        markings,
        mark=at position 0.13 with {
            \fill (0,0) circle (1.5pt) node[above left] {$\gamma(a+\eta)$};
        },
        mark=at position 0.85 with {
            \fill (0,0) circle (1.5pt) node[below right] {$\gamma(b-\eta)$};
        },
    }
] 
(-1.8, -1.4) .. controls (-1, -0.5) and (1, 0.5) .. (1.8, 1.4);

  \end{tikzpicture}
  \caption{The null geodesic $\gamma$ and the time slices giving the compact interval in time where the Gaussian beam decay lemma is applied.}
  \label{fig:gamma-curve}
\end{figure}

\section{Stationary geometry}\label{sec-stat}

We now assume that the globally hyperbolic $(n+1)-$manifold $\mathcal{M}$ is stationary, i.e., there is a complete timelike Killing vector field $X$ on $\mathcal{M}$. Our unknown potential $Q$ is supported in some compact $K \subset \mathcal{M}$. In order to carry out a reduction to a Riemannian ray transform problem, we will construct a Lorentzian submanifold $\olm \subset \mathcal{M}$ with boundary and $\dim (\olm) = n+1$.

Since $\mathcal{M}$ is globally hyperbolic, Theorem~\ref{prop_of_globhyper} guarantees the existence of a smooth spacelike Cauchy hypersurface $\Sigma \subset \mathcal{M}$. Then $\mathcal{M}$ is isometric to a standard stationary manifold \cite[Lemma 3.3]{Javaloyes:2008zz}, which means that the metric can be expressed in coordinates $(t,x)$ as 
\begin{equation}\label{standard-stationary}
    \overline{g} = c(x)^2\left(- dt^2 + 2\tilde{\omega}(x) dt + \tilde{g}(x)\right),
\end{equation}
where $t$ is a global time coordinate associated with the flow of $X$, and the 1-form $\tilde{\omega}$ and the Riemannian metric $\tilde{g}$ are independent of time $t$. Moreover, $X = \pt_t$ in these coordinates. We review the proof of this result in Appendix \ref{app-stat}. With definitions $g = \tilde{g} + \tilde{\omega}\otimes \tilde{\omega}$ and $\omega = - \tilde{\omega}$, we can complete the square so that the metric becomes
\begin{equation}\label{gstat1}
\overline{g} = c^2(-(dt + \omega)^2 + g).
\end{equation}
Here the new 1-form $\omega$ and the Riemannian metric $g$ are also independent of time $t$.

Let $\pi : \R \times \Sigma \to \Sigma$ be the natural projection. Then the image $\pi(K)$ is compact in $\Sigma$. 
Now choose a compact Riemannian submanifold $M \subset \Sigma$ with smooth boundary such that $\pi(K) \subset \text{Int} (M)$. This choice is justified provided that, e.g., there is a minimizing inextendible null geodesic\footnote{Recall that a null geodesic $\gamma: I \to \mathcal{M}$ is minimizing if the Lorentzian distance function restricted to $\gamma(I)\times \gamma(I)$ vanishes. In other words, there is no null cut point along $\gamma$.}, and with the help of Lemma \ref{exit-lemma}. The rationale here is that while Theorem~\ref{thm_reduction_lorentz} works for any $K$ compact, the reduction to Riemannian ray tomography problem may still fail if, for instance, the Cauchy hypersurface $\Sigma$ is compact and $K = I \times \Sigma$ with $I \subset \R$ some closed interval. This could happen, e.g., in the Einstein static universe $\R \times S^n$ endowed with the natural Lorentzian product metric. On the other hand, when $\Sigma$ is noncompact, the problem does not arise; $\pi(K)$ can be contained in an open set with compact closure in $\Sigma$.

Next, we set $\olm = \R \times M$ and point out that $K$ is a compact subset of $\text{Int} ( \olm )$. Now $\overline{M} \cong \R \times M$ is a standard stationary Lorentzian $(n+1)-$manifold with boundary, a `spacetime cylinder,' whose metric is given by \eqref{gstat1}. Injectivity of the light ray transform is conformally invariant (see Appendix \ref{app-conformal} below) so we can rescale \eqref{gstat1} and consider instead an ultrastationary manifold with the metric 
\begin{equation}\label{gstat2}
\overline{g} = -(dt + \omega)^2 + g.
\end{equation}
We write a point $p \in \overline{M}$ as $p=(t,x)$ where $x$ is some local coordinate system on $M$. Any null tangent vector $\overline{v} = (v^0, v) \in T_p\overline{M}, v \in T_x M,$ satisfies
\begin{equation}
    (v^0 + \omega_x(v))^2 = \norm{v}_g^2 \ ,
\end{equation}
where $\norm{\cdot}_g$ is the norm induced by $g$. Thus, given a future-directed null geodesic $\gamma(s) = (t(s),x(s))$, it holds that 
\begin{equation}
    \dot{t}=\norm{\dot{x}}_g - \omega_x(\dot{x}).
\end{equation}
It is known that null geodesics of \eqref{gstat2} can be fully described in terms of magnetic geodesics of the magnetic system $(M,g,\omega)$ that satisfy the Lorentz force equation \eqref{eq_lorentz_force}. (For a detailed exposition of this story, see e.g. \cite{oksanen2025interplay}.) Magnetic geodesics have constant speed; we pick here unit speed normalization $\norm{\dot{x}}_g = 1$ so that the corresponding null geodesics are
\[
\gamma(s) = (t_0 + s - \int_0^s \omega_{x(\sigma)}(\dot{x}(\sigma)) d\sigma, x(s)),
\]
where $t_0$ is the initial time for the geodesic.
This corresponds to a choice of affine parametrization for null geodesics.

We denote the flow of unit speed magnetic geodesics by $\varphi_s$, and the correspondingly normalized null geodesic flow by $\phi_s$. The vector bundle of normalized null directions is the subset of $T\overline{M}$ given by 
\[
    \{ (z,\overline{v}) \in T\overline{M}: z = (t,x) \in \overline{M}, \overline{v}=(v^0, v) \in T_z\overline{M} , \norm{v}_g = 1, v^0 = 1 - \omega_x(v)\}.
\]
Since the timelike component $v^0$ is fully determined by the magnetic potential and the unit spacelike direction, we denote points in the bundle by $(t,x,v)$ and the bundle itself by $\R \times SM$ where $SM$ is the unit sphere bundle of $M$. The flow $\phi_s$ is then a map $\R\times SM \to \R \times SM$.

Let $G$ and $X$ be the infinitesimal generators of $\varphi_s$ and $\phi_s$, respectively. The geodesic vector field $X$ can be written explicitly as
\begin{equation}\label{X-local}
X(t,x,v) = (1-\omega_x(v))\pt_t + G(x,v).
\end{equation}

 The influx boundary $\pt_+ SM$ is defined as the set of inwards-pointing unit tangent vectors at the boundary $\pt M$ and, similarly, the outflux boundary $\pt_- SM$ is formed of the outwards-pointing unit tangent vectors at the boundary, so that $\pt SM = \pt_- SM \cup \pt_+ SM$. The intersection $\pt_0 SM = \pt_+ SM \cap \pt_- SM = S(\pt M)$ is called the glancing region \cite[Ch. 3]{paternain2023geometric}.

 We assume that $\varphi_s$ is non-trapping with exit time function $\kappa: SM \to [0,\infty)$. Analogously, the enter time is the function $\sigma: SM \to (-\infty,0]$ such that the magnetic geodesic $\gamma_{x,v}: [\sigma(x,v),\taux] \to M$ is maximal. We also assume that $M$ is strictly magnetic convex, which means that
 \begin{equation}\label{eq_magnetic_convex}
 \Pi_{x}(v,v) > g_{x}(F_{x}(v),\nu(x))
 \end{equation}
 for all $x\in \partial M$ and $v\in T_{x}\partial M$. Here $\Pi$ denotes the (scalar) second fundamental form of $\partial M$ and $\nu$ the inward unit normal. Strict magnetic convexity of the boundary guarantees that $\kappa$ is smooth in $SM \setminus \pt_0 SM$, which follows from the implicit function theorem as in \cite[Lem. 3.2.3]{paternain2023geometric}.

\section{Non-Abelian light ray transform}
\label{sec_non_abelian_light}

\subsection{General connection}
Before specializing to the time independent case, we first define the non-Abelian light ray transform and derive the corresponding transport equation for a general connection.
Consider the trivial bundle $E=\overline{M}\times \C^N$ with a smooth unitary connection one-form $\ola$. At first, we allow $\ola$ to depend on time, and we write the connection as
\[
\overline{A}(t,x) = \Phi(t,x) dt + \tilde{A}(t,x),
\]
    where $\Phi$ is a smooth matrix-valued function on $\olm$ (called a Higgs field in e.g. \cite{paternain2023geometric}), and $\tilde{A}$ is a time dependent connection one-form on $M \times \C^N$. The connection acting in null directions splits into a time dependent connection and a Higgs field on the Riemannian base manifold as
\[
\ola(\dot{\gamma}_{t,x,v}(0)) = \Phi(t,x)(1- \omega(v)) + \tilde{A}(v) = \Phi(t,x) + B(v),
\]
where $B = \tilde{A} - \Phi \, \omega$ is a new connection one-form on $M$.

Let $Q$ be a smooth function taking values in $\C^{N\times N}$.
Given a maximal null geodesic $\gamma: [0,l] \to \overline{M}$, we define the light ray transform of $Q \in \Cinf(\olm, \C^{N\times N})$ with $\supp Q \subset \text{Int}(\olm)$ as
\begin{equation}
    \LR_{\ola} Q(\gamma) = \int_0^l P_{\overline{A},\gamma}(s)\pminus{1} Q(\gamma(s)) P_{\overline{A},\gamma}(s) ds,
\end{equation}
where $P_{\overline{A},\gamma}(s)$ is the parallel transport map for the connection $\overline{A}$ in Theorem~\ref{thm_reduction_lorentz}. Recall that parallel transport of a vector $u_0 \in \C^N$ along $\gamma$ is the unique solution of the initial value problem
\[
    \frac{d}{ds}u(s) + \overline{A}(\dot{\gamma}(s))u(s) = 0, \quad    u(0) = u_0,
\]
and $P_{\overline{A},\gamma}(s)$ is the fundamental matrix solution to this equation.

We assume that $\LR_{\ola} Q = 0$ and aim to show that this entails $Q = 0$. This problem is more effectively studied at the level of tangent bundle where the integral geometry problem reduces to a transport problem.
In this picture, the connection $\ola$ is mapped to a \emph{matrix attenuation} $\A$ on $\R \times SM$, given by $\A = (\pi^*_{\olm} \ola)(X)$, where $\pi_{\olm}: T\olm \to \olm$ is the canonical projection. In local coordinates, this means that
\[
\A(t,x,v) = \ola(\dot{\gamma}_{t,x,v}(0)) = \Phi(t,x) + B(v)
\]
for the null geodesic $\gamma_{t,x,v}$ with initial data $(t,x,v)\in \R\times SM$.
Similarly, the parallel transport map $P_{\overline{A},\gamma}$ becomes a map
$P_{\A}:\mathcal{O}\subset\R^2 \times SM \to \C^{N\times N}$ by setting
\[
P_{\A} (s,t,x,v) = P_{\ola,\gamma_{t,x,v}}(s),
\]
where $\gamma_{t,x,v}$ is a null geodesic as above, and the domain $\mathcal{O}$ is such that for a fixed $(t,x,v)\in \R\times SM$, the map $P_\A(\cdot,t,x,v)$ is defined on the interval $[\sigma(x,v),\taux]$.
The parallel transport map $P_{\A}$ satisfies the ODE
\begin{equation}\label{eq:parallel}
\frac{d}{ds}P_\A (s,t,x,v) + \A (\phi_s(t,x,v)) P_\A (s,t,x,v) = 0 ,\quad P_\A(0,t,x,v) = \text{Id}.
\end{equation}
Furthermore, it has the cocycle property
\begin{equation}\label{cocycle}
    P_\A (s + s', t,x,v) = P_\A(s',\phi_s(t,x,v)) P_\A (s,t,x,v), \quad \forall \, \sigma(x,v) \leq s + s' \leq \taux
\end{equation}
which follows from uniqueness for ODEs and the flow composition rule.

We also promote $Q$ to a degree 0 function in $\Cinfo(\R \times SM, \C^{N\times N})$.
Define a function $U^Q$ on $\R \times SM$ by
\begin{equation}\label{U-def}
    U^Q(t,x,v) = \int_0^\taux P_\A(s,t,x,v)\pminus{1} Q(\phi_s(t,x,v)) P_\A(s,t,x,v) ds.
\end{equation}
Since $P_{\A}$ and $Q$ are smooth everywhere, $\kappa$ is smooth in $SM\setminus \pt_0 SM$ and continuous in $SM$, and $\supp Q \subset \text{Int} ( \olm )$, also $U^Q$ has to be smooth. Moreover, $U^Q$ is compactly supported, as $Q$ is.
We have $U^Q \rvert_{\R\times \pt_+ SM} = \LR_{\ola} Q$ and $U^Q\rvert_{\R\times\pt_- SM} = 0$. Observe that 
\begin{align}
    &U^Q(\phi_\sigma(t,x,v)) \nn \\ 
    = &\int_0^{\kappa(\varphi_\sigma(x,v))} P_\A(s,\phi_\sigma(t,x,v))\pminus{1} Q(\phi_{\sigma+s}(t,x,v)) P_\A(s,\phi_\sigma(t,x,v)) ds \nn \\
    = &P_\A(\sigma, t,x,v) \left(\int_0^{\taux-\sigma} P_\A(\sigma + s,t,x,v)\pminus{1} Q(\phi_{\sigma+s}(t,x,v)) P_\A(\sigma + s,t,x,v) ds \right)\\
    & \quad \cdot P_A(\sigma, t,x,v)\pminus{1},
\end{align}
where we used the cocycle property \eqref{cocycle}.
Using the definition of a geodesic vector field, we then get
\begin{align}
    XU^Q(t,x,v) &= \frac{d}{d\sigma}\Big\rvert_{\sigma=0}U^Q(\phi_\sigma(t,x,v)) \nn \\
    &= \frac{d}{d\sigma}\Big\rvert_{\sigma=0} P_\A(\sigma, t,x,v) U^Q(t,x,v) + U^Q(t,x,v) \frac{d}{d\sigma}\Big\rvert_{\sigma=0} P_\A(\sigma, t,x,v)\pminus{1} \nn \\
    &\quad + \frac{d}{d\sigma}\Big\rvert_{\sigma=0}\int_0^{\taux-\sigma} P_\A(\sigma + s,t,x,v)\pminus{1} Q(\phi_{\sigma+s}(t,x,v)) P_\A(\sigma + s,t,x,v) ds \nn \\
    &= -\A(t,x,v) U^Q(t,x,v) + U^Q(t,x,v)\left(-\frac{d}{d\sigma}\Big\rvert_{\sigma=0} P_\A (\sigma,t,x,v) \right) \nn \\
    &\quad -  P_\A(\taux,t,x,v)\pminus{1} Q(\phi_{\taux}(t,x,v)) P_\A(\taux,t,x,v) \nn \\
    &\quad + \int_0^{\taux}\frac{d}{d\sigma}\Big\rvert_{\sigma=0} \left( P_\A(\sigma + s,t,x,v)\pminus{1} Q(\phi_{\sigma+s}(t,x,v)) P_\A(\sigma + s,t,x,v) \right) ds \nn \\
    &= -\A(t,x,v) U^Q(t,x,v) + U^Q(t,x,v) \A(t,x,v)  \nn \\
    &\quad -  P_\A(\taux,t,x,v)\pminus{1} Q(\phi_{\taux}(t,x,v)) P_\A(\taux,t,x,v) \nn \\
    &\quad + \int_0^{\taux}\frac{d}{ds} \left( P_\A( s,t,x,v)\pminus{1} Q(\phi_{s}(t,x,v)) P_\A(s,t,x,v) \right) ds \nn \\
    &= -\A(t,x,v) U^Q(t,x,v) + U^Q(t,x,v) \A(t,x,v) - Q(t,x,v).
\end{align}
On the third line here we used the parallel transport equation \eqref{eq:parallel} and the Leibniz integral rule, and on the fourth line we made a change of variables. From this we obtain that $U^Q$ is the unique solution to the transport problem
\begin{equation}\label{transport1}
    XU + [\A,U] = -Q, \quad U\rvert_{\R \times \pt_- SM} = 0.
\end{equation}
Injectivity of $\LR_{\ola}$ can be characterized in terms of the above transport problem.
\begin{lemma}\label{lemma-LR-charac}
    $\LR_{\ola} Q = 0$ implies $Q=0$ if and only if the unique smooth solution to 
    \[
    XU + [\A, U] = -Q, \quad U\rvert_{\R\times\pt SM} = 0
    \]
    is $U=0$.
\end{lemma}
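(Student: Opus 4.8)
The plan is to use the function $U^Q$ of \eqref{U-def} as the bridge between the two statements, reading the biconditional as quantified over all admissible $Q$. The three facts I will lean on are all already recorded above: $U^Q$ is the \emph{unique} smooth solution of the transport problem \eqref{transport1}, it satisfies $U^Q\rvert_{\R\times\pt_- SM}=0$ automatically, and $U^Q\rvert_{\R\times\pt_+ SM}=\LR_{\ola}Q$. The uniqueness here is the crucial structural input: since $\varphi_s$ is nontrapping with finite exit time $\taux$, the equation $XU+[\A,U]=-Q$ restricts along each flow line to a linear matrix ODE, and prescribing $U$ on the outflux boundary $\R\times\pt_- SM$ determines it everywhere by backward integration. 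In particular, \emph{any} smooth solution $U$ of $XU+[\A,U]=-Q$ that vanishes on $\R\times\pt_- SM$ must coincide with $U^Q$.

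First I would treat the direction assuming injectivity of $\LR_{\ola}$, i.e. $\LR_{\ola}Q=0\Rightarrow Q=0$. Let $U$ be a smooth solution of $XU+[\A,U]=-Q$ with $U\rvert_{\R\times\pt SM}=0$. Its vanishing on $\R\times\pt_- SM\subset\R\times\pt SM$ forces $U=U^Q$ by the uniqueness just described, whence $\LR_{\ola}Q=U^Q\rvert_{\R\times\pt_+ SM}=U\rvert_{\R\times\pt_+ SM}=0$. The injectivity hypothesis then gives $Q=0$, and feeding this back into the equation (or using that $U^0\equiv0$) yields $U=U^Q=0$, as required.

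For the converse I would assume that the only smooth solution of $XU+[\A,U]=-Q$ with $U\rvert_{\R\times\pt SM}=0$ is $U=0$, and suppose $\LR_{\ola}Q=0$. Then $U^Q$ vanishes on $\R\times\pt_- SM$ (always) and on $\R\times\pt_+ SM$ (because $U^Q\rvert_{\R\times\pt_+ SM}=\LR_{\ola}Q=0$), hence on all of $\R\times\pt SM$. Since $U^Q$ does solve the transport equation, the hypothesis forces $U^Q\equiv0$, and then $Q=-\bigl(XU^Q+[\A,U^Q]\bigr)=0$. This closes the equivalence.

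The genuinely substantive point — rather than the bookkeeping above — is the uniqueness statement for \eqref{transport1}, and I expect that to be where care is needed: it presupposes that every point of $\R\times SM$ is carried to the boundary in finite flow time (nontrapping, finite $\taux$) and that $U^Q$ is globally smooth, which in turn relies on smoothness of $\taux$ away from the glancing region $\pt_0 SM$ together with $\supp Q\subset\text{Int}(\olm)$. The only remaining subtlety is one of conventions: I must make sure the outflux/influx labelling is consistent with the integration limits $[0,\taux]$ in \eqref{U-def}, so that indeed $U^Q=0$ on $\R\times\pt_- SM$ and $U^Q=\LR_{\ola}Q$ on $\R\times\pt_+ SM$; once that is pinned down the argument is purely formal.
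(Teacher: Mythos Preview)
Your proof is correct and follows essentially the same route as the paper's own argument: both directions pivot on the identification $U=U^Q$ via uniqueness for \eqref{transport1}, then read off $\LR_{\ola}Q$ from the $\pt_+SM$ boundary values and recover $Q$ from the transport equation once $U^Q=0$. Your additional remarks on the role of nontrapping and smoothness of $\kappa$ are accurate but not needed beyond what the paper already establishes before the lemma.
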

\begin{proof}
    `$\Rightarrow$' Given a $Q$ as above, let $U$ solve the above transport problem. $U\rvert_{\pt_+ SM} = 0$ means that $\LR_{\ola} Q= 0$, which implies that $Q=0$. But $U$ is the unique solution to \eqref{transport1} and thus given by \eqref{U-def}, so $U$ vanishes when $Q=0$.

    `$\Leftarrow$' Suppose $\LR_{\ola} Q = 0$. Then $U^Q$ given in \eqref{U-def} is the unique solution to the transport problem in the lemma, which vanishes by assumption. Then the transport equation immediately gives $Q=0$.
\end{proof}
In the time dependent case proving injectivity of $\LR_{\ola}$ appears rather intractable; in the time independent case much more can be said.

\subsection{Time independent connection}
Suppose now that the connection is time independent, so that
\[
\overline{A}(x) = \Phi(x) dt + \tilde{A}(x),
\]
where $\Phi$ is now a smooth matrix-valued function on $M$, and $\tilde{A}(x)$ is a connection one-form on $M \times \C^N$. The connection acting in null directions now splits as
\[
\ola(\dot{\gamma}_{t,x,v}(0)) = \Phi(x) + B(v),
\]
where $B$ is a time independent connection one-form on $M$. Thus, parallel transport along light rays with respect to $\ola$ in spacetime projects to parallel transport along magnetic geodesics with respect to $(B,\Phi)$ on time slices. In the tangent bundle picture we now write
\[
\A(x,v) = \Phi(x) + B(v),
\]
for all $(x,v)\in SM$.

Time-independence opens the door for Fourier slicing techniques. Using the explicit form \eqref{X-local} for the null geodesic vector field $X$, \eqref{transport1} becomes
\begin{equation}
    GU + (1-\Omega)\pt_t U + [\A,U] = -Q, \quad U\rvert_{\R\times \pt SM} = 0
\end{equation}
where $\Omega(x,v)= \omega_x(v)$. We Fourier transform this with respect to $t$ to get
\begin{equation}\label{transport2}
    G \hat{U} + i\tau(1-\Omega)\hat{U} + [\A, \hat{U}] = - \hat{Q}, \quad \hat{U}\rvert_{\R\times \pt SM} = 0.
\end{equation}
Observe that the Fourier transforms exist and are real analytic since $Q$ and $U$ have compact support. This allows us to prove
\begin{lemma}\label{lemma-reduction}
    Given a function $Q\in \Cinfo(\olm,\C^{N\times N})$ with $\supp Q \subset \textnormal{Int}(\olm)$, suppose that the solution $W \in \Cinf(SM,\C^{N\times N})$ to the transport problem
    \[
    G W + [\A,W] = - \hat{Q}, \quad W \rvert_{\pt SM} = 0
    \]
    is $W=0$. Then there exists a smooth solution $U$ to the transport problem \eqref{transport1} only if $Q = 0$.
\end{lemma}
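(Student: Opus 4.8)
The plan is to push through the Fourier slicing already begun in \eqref{transport2} and then strip off the frequency-dependent scalar term $i\tau(1-\Omega)\hat U$ by an integrating factor, landing exactly on the Riemannian transport problem appearing in the hypothesis. First I would take a smooth solution $U$ of the Lorentzian transport problem; by uniqueness for the outflux problem it coincides with $U^Q$ from \eqref{U-def}, and the situation fed to the lemma through Lemma~\ref{lemma-LR-charac} is the one in which $U$ also vanishes on the influx boundary, i.e. $U\rvert_{\R\times\pt SM}=0$. Since $Q\in\Cinfo$, the representation \eqref{U-def} shows $U=U^Q$ is compactly supported in $t$, so its partial Fourier transform $\hat U(\tau,x,v)$ in $t$ is well defined and smooth (and, being the transform of a compactly supported function, real analytic in $\tau$). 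For each fixed $\tau$ the slice $\hat U(\tau,\cdot)$ solves \eqref{transport2} with $\hat U(\tau,\cdot)\rvert_{\pt SM}=0$.

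Next I would remove the term $i\tau(1-\Omega)\hat U$. Because $X=(1-\Omega)\pt_t+G$, the flow $\phi_s$ advances $t$ at rate $1-\Omega$, which suggests seeking a real function $\psi$ on $SM$ with $G\psi=1-\Omega$; a natural candidate is $\psi(x,v)=\int_{\sigma(x,v)}^{0}(1-\Omega)(\varphi_r(x,v))\,dr$, which additionally satisfies $\psi\rvert_{\pt_+ SM}=0$. Since $i\tau(1-\Omega)$ is scalar, the factor $e^{i\tau\psi}$ commutes with $\A$, so putting $W_\tau:=e^{i\tau\psi}\hat U(\tau,\cdot)$ gives
\[
G W_\tau+[\A,W_\tau]=e^{i\tau\psi}\bigl(G\hat U+i\tau(G\psi)\hat U+[\A,\hat U]\bigr)=-e^{i\tau\psi}\hat Q(\tau,\cdot),
\]
where the unwanted term cancels upon inserting \eqref{transport2} and using $G\psi=1-\Omega$; moreover $W_\tau\rvert_{\pt SM}=0$. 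The new right-hand side $e^{i\tau\psi}\hat Q(\tau,\cdot)$ is again matrix valued, smooth, and supported in $\textnormal{Int}(SM)$ (because $\supp_x\hat Q\subset\textnormal{Int}(M)$), hence an admissible source for the transport problem of the hypothesis. By the hypothesis, read as the transport-uniqueness property for admissible sources---which is the Riemannian analogue of Lemma~\ref{lemma-LR-charac} and amounts to injectivity of $I_\A$, a property of $\A$ and the geometry that is insensitive to the particular admissible right-hand side---we obtain $W_\tau=0$, and then the transport equation yields $e^{i\tau\psi}\hat Q(\tau,\cdot)=0$, i.e. $\hat Q(\tau,\cdot)=0$. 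As this holds for every $\tau\in\R$, Fourier inversion in $t$ gives $Q=0$, which is the claim.

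The hard part will be the second step, namely producing a genuinely smooth integrating factor. The candidate $\psi$ built from the enter time $\sigma$ is only smooth on $SM\setminus\pt_0 SM$ and degenerates (square-root type) at the glancing region, so a priori $W_\tau=e^{i\tau\psi}\hat U$ need not be smooth there, and the hypothesis, which is stated for smooth $W$, could fail to apply. This is precisely where strict magnetic convexity \eqref{eq_magnetic_convex} enters: it controls the regularity of $\kappa$ and $\sigma$ away from $\pt_0 SM$ and, together with $\supp Q\subset\textnormal{Int}(\olm)$ and the vanishing of $\hat U$ on $\pt SM$, should let one verify that $W_\tau$ extends to a smooth solution on all of $SM$ (equivalently, that $W_\tau$ is the canonical smooth solution obtained by integrating the smooth interior source along the magnetic flow). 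I would therefore spend the bulk of the effort on this regularity point, after which the algebraic cancellation and the Fourier inversion are routine.
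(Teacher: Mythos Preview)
Your integrating-factor approach is genuinely different from the paper's and the algebra is correct: with $G\psi=1-\Omega$ the substitution $W_\tau=e^{i\tau\psi}\hat U(\tau)$ kills the scalar term and produces $GW_\tau+[\A,W_\tau]=-e^{i\tau\psi}\hat Q(\tau)$ with $W_\tau\rvert_{\pt SM}=0$. The regularity worry you flag is also manageable: because $\supp_x\hat Q\subset\intr M$, the source $e^{i\tau\psi}\hat Q(\tau)$ is smooth on all of $SM$, and then $W_\tau$ must agree with the canonical integral solution \eqref{UW-def}, which is smooth. So the argument closes at that level.

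Where your route loses ground is in the \emph{class of sources} to which you must apply the Riemannian uniqueness property. Your right-hand side $e^{i\tau\psi}\hat Q(\tau,\cdot)$ depends on $v$ through $\psi$; it is a genuine function on $SM$, not a degree-$0$ function pulled back from $M$. Thus you need injectivity of $I_\A$ on $\Cinf(SM,\C^{N\times N})$, whereas the lemma (and Theorem~\ref{thm_sufficient_condition_for_light_ray} it feeds) only assumes injectivity on $\Cinfo(M,\C^{N\times N})$. The paper avoids this by not fixing $\tau$ at all: it differentiates \eqref{transport2} $k$ times in $\tau$ and evaluates at $\tau=0$, where the offending term $i\tau(1-\Omega)\hat U$ and all its contributions collapse, leaving at each step a transport problem whose source $\pt_\tau^k\hat Q(0)$ is again a degree-$0$ function on $M$. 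An easy induction gives $\pt_\tau^k\hat U(0)=\pt_\tau^k\hat Q(0)=0$ for all $k$, and real analyticity of $\hat Q$ in $\tau$ finishes. So the paper's Taylor-at-zero trick buys exactly the weaker hypothesis on $I_\A$; your argument is cleaner per frequency but proves a weaker theorem, and no amount of smoothing of $\psi$ will repair the mismatch in source classes.
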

\begin{proof}
Suppose there exists a smooth solution $U$ to the transport problem \eqref{transport1}. Then for each $\tau \in \R$ there exists a smooth $\hat{U}(\tau) \in \Cinf(SM, \C^{N\times N})$ that solves \eqref{transport2}.
Differentiate \eqref{transport2} now $k$ times with respect to $\tau$ and evaluate at $\tau=0$, and we have
\[
G\pt_\tau^k \hat{U}(0) + i(k-1)(1-\Omega) \pt_\tau^{k-1}\hat{U}(0) + [\A, \pt_\tau^k \hat{U}(0)] = - \pt_\tau^k \hat{Q}(0), \quad \pt_\tau^k\hat{U}(0)\rvert_{\pt SM} = 0.
\]
We establish the basic step by noting that
\[
G \hat{U}(0) + [\A, \hat{U}(0)] = - \hat{Q}(0), \quad \hat{U}(0)\rvert_{\pt SM} = 0
\]
implies that $\hat{U}(0)=0$ and this in turn gives $\hat{Q}(0) = 0$. For the induction step, assume that $\pt_\tau^{k-1}\hat{U}(0) = 0$. Then we get
\[
G \pt_\tau^k\hat{U}(0) + [\A,\pt_\tau^k \hat{U}(0)] = - \pt_\tau^k \hat{Q}(0), \quad \pt_\tau^k\hat{U}(0)\rvert_{\pt SM} = 0
\]
from which it follows that $\pt_\tau^k\hat{U}(0) = \pt_\tau^k\hat{Q}(0)=0$. This completes the induction, and we find that
\[
\hat{Q}(\tau) = \sum_{k=0}^\infty \frac{\pt_\tau^k \hat{Q}(0)}{k!} \tau^k = 0 \quad \forall \tau \in \R.
\]
Hence, we conclude that $Q = 0$.
\end{proof}

Recall that we defined a non-Abelian magnetic X-ray transform $I_\A$ of $V \in \Cinf(SM,\C^{N\times N})$ with $\supp V \subset \textnormal{Int}(SM)$ by
\[
I_\A V(x,v) = \int_0^\taux P_{\A}(s,x,v)\pminus{1}V(\varphi_s(x,v)) P_{\A}(s,x,v) ds, \quad \forall (x,v) \in \pt_+ SM,
\]
and, as before, we define a matrix-valued function
\begin{equation}\label{UW-def}
   W^V(x,v) = \int_0^\taux P_{\A}(s,x,v)\pminus{1}V(\varphi_s(x,v)) P_{\A}(s,x,v) ds, \quad \forall (x,v) \in SM.
\end{equation}
Here $P_{\A}(s,x,v)$ is the smooth map given by the parallel transport along magnetic geodesics on $M$, which coincides with $P_{\A}(t,s,x,v)$ in the time independent case. A similar calculation as above gives that $W^V$ is the unique solution to the transport equation
\begin{equation}\label{transport3}
G W + [\A, W] = - V, \quad W\rvert_{\pt_- SM} = 0.
\end{equation}
Just like $\LR_{\ola}$, injectivity of $I_\A$ can be characterized by the properties of the corresponding transport equation. 
\begin{lemma}\label{lemma-IA-charac}
    $I_\A V = 0$ implies $V=0$ if and only if the unique smooth solution to 
    \[
    GW + [\A, W] = -V, \quad W\rvert_{\pt SM} = 0
    \]
    is $W=0$.
\end{lemma}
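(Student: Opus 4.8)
The plan is to follow verbatim the structure of the proof of Lemma~\ref{lemma-LR-charac}, since the present statement is the exact magnetic analogue with the light ray transform $\LR_{\ola}$ replaced by $I_\A$, the light ray transport problem \eqref{transport1} replaced by the magnetic transport problem \eqref{transport3}, and $U^Q$ replaced by the function $W^V$ of \eqref{UW-def}. Two already-established facts do all the work: first, $W^V$ is \emph{the} unique smooth solution of $GW + [\A,W] = -V$ satisfying the single outflux boundary condition $W\rvert_{\pt_- SM} = 0$; second, comparing the defining integrals shows $W^V\rvert_{\pt_+ SM} = I_\A V$, while $W^V\rvert_{\pt_- SM} = 0$ because the exit time $\taux$ vanishes on the outflux boundary and the defining integral is then empty. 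I would open the proof by recording these two observations.

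For the direction `$\Rightarrow$', I would assume that $I_\A V = 0$ implies $V = 0$, fix an arbitrary $V$, and let $W$ be the unique smooth solution to $GW + [\A,W] = -V$ with $W\rvert_{\pt SM} = 0$. In particular $W\rvert_{\pt_- SM} = 0$, so uniqueness for \eqref{transport3} forces $W = W^V$. The remaining condition $W\rvert_{\pt_+ SM} = 0$ then reads $I_\A V = W^V\rvert_{\pt_+ SM} = 0$, whence $V = 0$ by hypothesis; since $W = W^V$ is given by \eqref{UW-def}, it vanishes once $V = 0$, so $W = 0$. For `$\Leftarrow$', I would assume that the only solution vanishing on all of $\pt SM$ is trivial, suppose $I_\A V = 0$, and note that $W^V$ then has vanishing data on both $\pt_+ SM$ (equal to $I_\A V$) and $\pt_- SM$, hence on all of $\pt SM$. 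By hypothesis $W^V \equiv 0$, and the transport equation $G W^V + [\A, W^V] = -V$ immediately yields $V = 0$.

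Everything here is formal once well-posedness is granted, so the only genuine content—and the step I would be most careful about—is the regularity and uniqueness packaged into the two opening observations. Concretely, one needs that $W^V$ is globally smooth (this uses $\supp V \subset \textnormal{Int}(SM)$ together with smoothness of $P_{\A}$ and smoothness of $\taux$ on $SM \setminus \pt_0 SM$ with continuity across the glancing region, guaranteed by strict magnetic convexity \eqref{eq_magnetic_convex}) and that \eqref{transport3} is uniquely solvable under the outflux condition, so that $W^V$ is genuinely characterized by it. These ingredients are exactly those assembled in the construction preceding \eqref{transport3}, so no new estimate is required; the argument is otherwise pure bookkeeping, mirroring Lemma~\ref{lemma-LR-charac}.
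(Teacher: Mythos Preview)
Your proposal is correct and follows exactly the approach the paper indicates: the paper simply states that the proof is ``completely analogous to that of Lemma~\ref{lemma-LR-charac},'' and your argument carries out precisely that analogy, replacing $U^Q$, $\LR_{\ola}$, and \eqref{transport1} by $W^V$, $I_\A$, and \eqref{transport3}.
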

\noindent The proof of this lemma is completely analogous to that of Lemma \ref{lemma-LR-charac}.

Combining Lemmata~\ref{lemma-LR-charac}, \ref{lemma-reduction}, and \ref{lemma-IA-charac}, we arrive at our main theorem.
\begin{theorem}
\label{thm_sufficient_condition_for_light_ray}
    If $I_\A$ is injective for functions in $\{V \in \Cinfo(M, \C^{N\times N}): \supp V \subset \intr{M}  \}$, then $\LR_{\ola}$ is injective for functions in $\{Q \in \Cinfo(\R\times M, \C^{N\times N}): \supp Q \subset \R\times \intr{M}  \}$.
\end{theorem}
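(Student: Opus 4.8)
The plan is to prove the implication by chaining the three characterizations just established, since all the real-analytic and integral-geometric content has already been isolated in Lemmata~\ref{lemma-LR-charac}, \ref{lemma-reduction} and \ref{lemma-IA-charac}; what remains is to translate the hypothesis to the sphere bundle and keep the boundary and support conditions consistent. First I would rewrite the hypothesis: by Lemma~\ref{lemma-IA-charac}, injectivity of $I_\A$ on the class $\{V \in \Cinfo(M,\C^{N\times N}) : \supp V \subset \intr{M}\}$ is equivalent to the assertion that, for every such $V$, the only smooth solution of the overdetermined boundary value problem
\begin{equation}
    G W + [\A, W] = -V, \qquad W\rvert_{\pt SM} = 0
\end{equation}
is $W = 0$. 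This trivial-solution property is exactly the hypothesis required by Lemma~\ref{lemma-reduction}, so the task reduces to checking that the data produced by Fourier slicing land in this admissible class.

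Next I would run the Fourier reduction. Given $Q \in \Cinfo(\R \times M, \C^{N\times N})$ with $\supp Q \subset \R \times \intr{M}$, compact support in $t$ makes the partial Fourier transform $\hat Q(\tau)$ real analytic in $\tau$, and the containment of the spatial projection of $\supp Q$ in $\intr{M}$ guarantees that $\hat Q(\tau)$ together with every derivative $\pt_\tau^k \hat Q(0)$ belongs to $\{V \in \Cinfo(M,\C^{N\times N}) : \supp V \subset \intr{M}\}$. Hence the trivial-solution property from the first step is available for all of these data, and Lemma~\ref{lemma-reduction} applies: a smooth solution $U$ of the spacetime transport equation $XU + [\A,U] = -Q$ subject to the full boundary condition $U\rvert_{\R \times \pt SM} = 0$ can exist only if $Q = 0$.

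Finally I would close the loop with Lemma~\ref{lemma-LR-charac}. Suppose $\LR_{\ola} Q = 0$. The function $U^Q$ of \eqref{U-def} is the unique solution of the influx problem \eqref{transport1}, so $U^Q\rvert_{\R \times \pt_- SM} = 0$ by construction, while $U^Q\rvert_{\R \times \pt_+ SM} = \LR_{\ola} Q = 0$ by assumption; together these give $U^Q\rvert_{\R \times \pt SM} = 0$. Thus $U^Q$ is a smooth solution of the full-boundary problem of the previous step, which forces $Q = 0$. By Lemma~\ref{lemma-LR-charac} this is precisely the injectivity of $\LR_{\ola}$ on the stated class, which is the claim.

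The only genuinely substantive ingredient is the analyticity-in-$\tau$ argument internal to Lemma~\ref{lemma-reduction}, where the vanishing of all $\tau$-derivatives of $\hat Q$ at the origin is upgraded to $\hat Q \equiv 0$; everything else is bookkeeping. The main point to watch in the assembly is the consistent handling of boundary conditions: the solutions delivered by the integral formula \eqref{U-def} satisfy only the influx condition on $\R \times \pt_- SM$, whereas Lemmata~\ref{lemma-reduction} and \ref{lemma-LR-charac} are phrased on the full boundary $\R \times \pt SM$, and the precise role of the hypothesis $\LR_{\ola} Q = 0$ is exactly to promote the influx-boundary solution $U^Q$ to a full-boundary one so that the reduction lemma can be triggered.
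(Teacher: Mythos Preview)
Your proof is correct and follows exactly the paper's approach, which is simply to combine Lemmata~\ref{lemma-LR-charac}, \ref{lemma-reduction}, and \ref{lemma-IA-charac}. You have spelled out in detail the bookkeeping (support conditions on $\hat Q(\tau)$ and its $\tau$-derivatives, and the promotion of the influx boundary condition to the full boundary condition via the hypothesis $\LR_{\ola}Q=0$) that the paper leaves implicit in its one-line proof.
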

\begin{proof}[Proof of Corollary~\ref{cor_A_nonzero}]
    We can understand the commutator in~\eqref{transport1} as a new matrix attenuation living in $\Cinf(\R\times SM, \C^{N^2 \times N^2})$ that corresponds to a connection on the endomorphism bundle $\text{End}(E) = \olm \times \C^{N\times N}$ (see e.g.\cite{tetlow2022recovery}). Indeed, defining the linear map $\B V = [\A, V]$, we see from
\[
\B V (t,x,v) = [\Phi(t,x), V(t,x,v)] + [B_i(t,x), V(t,x,v) ] v^i
\]
that $\B$ corresponds to a matrix attenuation given by a new Higgs field and a connection defined by the above commutators, i.e.
\begin{align}
    \tilde{\Phi}V = [\Phi, V], \quad \tilde{B}_i V = [B_i, V]
\end{align}
For a unitary connection $\ola$, the matrices $\Phi$ and $B_i$ are skew-Hermitian. The natural inner product on the endomorphism bundle is given by $\innerproduct{X}{Y}_{\text{End}(E)} = \Tr(X^\dagger Y)$, where $\dagger$ denotes the conjugate transpose. Thus, for a skew-Hermitian $\Phi$, we have
\begin{align}
\innerproduct{\tilde{\Phi} X}{Y}_{\text{End}(E)} &= \Tr([\Phi,X]^\dagger Y) = \Tr(X^\dagger \Phi^\dagger Y - \Phi^\dagger X^\dagger Y) = \Tr(X^\dagger(- \Phi Y + Y \Phi))\\
&= - \innerproduct{X}{\tilde{\Phi}Y}_{\text{End}(E)},    
\end{align}
where we used the cyclicity of trace. The same calculation can be applied to $\tilde{B}_i$. Hence, $\tilde{\Phi}$ and $\tilde{B}_i$ are skew-Hermitian linear maps with respect to $\innerproduct{\cdot}{\cdot}_{\text{End}(E)}$, with $N^2 \times N^2$ matrices of the linear maps acting on vectors in $\C^{N^2} \cong \C^{N\times N}$. The linear map $\B$ can therefore be understood as a $\C^{N^2\times N^2}$ matrix attenuation so that the transport problem \eqref{transport1} can be written in the standard form
\[
XU + \B U = -Q, \quad U\rvert_{\R \times \pt SM} = 0.
\]
Similarly, the magnetic transport equation becomes
\[
GU + \B U = - \hat{Q}, U\rvert_{\pt SM} = 0.
\]
The injectivity of $I_\A$ for functions is thus equivalent with the above transport problem having a vanishing solution.
\begin{enumerate}
    \item Suppose that $M$ is a simply connected surface having a strictly convex boundary with respect to $\varphi$, and that $\varphi$ has no conjugate points. Then \cite[Thm. 1.2]{Ains_13} implies that $I_\A$ is injective. Our main theorem then gives that the source-to-solution map $\mathcal R$ determines $Q$.
    \item The Lorentzian manifold being static means that $\omega = 0$. Then the non-Abelian magnetic X-ray transform coincides with the non-Abelian geodesic X-ray transform. Assuming that $\dim (M) \geq 3$, $\pt M$ is strictly convex, and $M$ admits a strictly convex foliation, \cite[Thm. 1.6]{paternain2019} implies that $I_\A$ is injective, from which we get again the conclusion.
\end{enumerate}
\end{proof}

\appendix

\section{Injectivity of the non-Abelian light ray transform is conformally invariant}\label{app-conformal}

Recall that a smooth curve $\alpha$ is called a pregeodesic provided that there exists a reparametrization $h$ such that $\alpha \circ h$ is a geodesic \cite{oneill1983}.

\begin{lemma}\label{conformal-null}
    Let $g, \Tilde{g}$ be conformally equivalent Lorentzian metrics on $\mathcal M$ with $\tilde{g} = c^2 g$ for some $c \in \Cinf(\mathcal M)$. Then we have
    \begin{equation}\label{pregeod}
        \widetilde{\na}_{\dot{\gamma}} \dot{\gamma} = 2 c\pminus{1}(dc \cdot \dot{\gamma})\dot{\gamma},
    \end{equation}
    for any null geodesic $\gamma$ of $g$. Furthermore, the null geodesics of $g$ are null pregeodesics of $\Tilde{g}$, and vice versa.
\end{lemma}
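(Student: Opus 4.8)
The plan is to deduce everything from the classical formula relating the Levi-Civita connections of two conformally related metrics. Writing $c = e^{\varphi}$ so that $\tilde g = e^{2\varphi} g$, the two connections are related by the tensorial identity
\begin{equation}
    \widetilde{\na}_X Y = \na_X Y + (X\varphi)\,Y + (Y\varphi)\,X - g(X,Y)\,\na\varphi,
\end{equation}
where $\na\varphi$ denotes the $g$-gradient of $\varphi$. I would either cite this from \cite{oneill1983} or reproduce the short derivation from the transformation of the Christoffel symbols, $\tilde\Gamma^k_{ij} = \Gamma^k_{ij} + \delta^k_i \pt_j\varphi + \delta^k_j \pt_i\varphi - g_{ij}g^{kl}\pt_l\varphi$.

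To obtain \eqref{pregeod}, I would specialize to $X = Y = \dot\gamma$ with $\gamma$ a null geodesic of $g$. Then $\na_{\dot\gamma}\dot\gamma = 0$ and $g(\dot\gamma,\dot\gamma) = 0$, so the identity collapses to $\widetilde{\na}_{\dot\gamma}\dot\gamma = 2(\dot\gamma\varphi)\dot\gamma$. Since $\dot\gamma\varphi = d\varphi(\dot\gamma) = c\pminus{1}(dc\cdot\dot\gamma)$, this is precisely the claimed identity $\widetilde{\na}_{\dot\gamma}\dot\gamma = 2 c\pminus{1}(dc\cdot\dot\gamma)\dot\gamma$.

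For the pregeodesic statement, I would first note that being null is conformally invariant, since $\tilde g(\dot\gamma,\dot\gamma) = c^2 g(\dot\gamma,\dot\gamma) = 0$; hence $\gamma$ is $\tilde g$-null as well. It then remains to produce a reparametrization making $\gamma$ a $\tilde g$-geodesic. Abbreviating \eqref{pregeod} as $\widetilde{\na}_{\dot\gamma}\dot\gamma = f\dot\gamma$ with $f = 2c\pminus{1}(dc\cdot\dot\gamma)$, I would set $\alpha = \gamma\circ h$ and compute, using the chain rule for the covariant derivative,
\begin{equation}
    \widetilde{\na}_{\dot\alpha}\dot\alpha = \bigl(h'' + (h')^2\, f\circ h\bigr)\,\dot\gamma .
\end{equation}
Thus $\alpha$ is a $\tilde g$-geodesic exactly when $h$ solves $h'' + (h')^2 f(h) = 0$. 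Substituting $w = h'$ reduces this to $dw/dh = -w\, f(h)$, which integrates to $h' = w_0\exp\bigl(-\int^{h} f\bigr)$, a separable autonomous first-order equation that is locally solvable. This yields the required reparametrization, so every null $g$-geodesic is a null $\tilde g$-pregeodesic; the converse follows by symmetry, exchanging $g \leftrightarrow \tilde g$ and $c \leftrightarrow c\pminus{1}$.

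The computations are all classical, so I do not expect a serious obstacle. The one point deserving care is the final reparametrization step: one must check that the acceleration-parallel-to-velocity condition really does give a geodesic reparametrization, i.e. that the scalar ODE for $h$ admits a solution with $h'$ nowhere vanishing (guaranteed here because $w_0 \neq 0$ keeps $h'$ away from zero), so that $h$ is a genuine change of parameter.
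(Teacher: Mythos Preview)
Your argument is correct and follows essentially the same route as the paper: both derive \eqref{pregeod} from the conformal transformation of the Christoffel symbols (your use of $\varphi=\log c$ is only a notational variant), then set up and solve the scalar ODE $h'' + (h')^2 f(h)=0$ for the reparametrization, and invoke the symmetry $c\leftrightarrow c^{-1}$ for the converse. The paper records the explicit solution $h'=\bigl(c(\gamma(a))/c(\gamma)\bigr)^2$, which is exactly what your integral $h'=w_0\exp(-\int^h f)$ evaluates to since $f=2\frac{d}{ds}\log(c\circ\gamma)$.
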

\begin{proof}
    The first claim follows from the conformal relation of Christoffel symbols
    \begin{equation}\label{Gamma-relation}
        \widetilde{\Gamma}^\rho\munud = \Gamma^\rho\munud + c\pminus{1} ((dc)\mud \delta^\rho\nud +  (dc)\nud \delta^\rho\mud - (\na c)^\rho g\munud),
    \end{equation}
    where $\widetilde{\Gamma}\rup\munud$ is a Christoffel symbol for $\tilde{g}$ and $\Gamma\rup\munud$ a Christoffel symbol for $g$. For the second claim, let $h$ be a reparametrization of $\gamma: [a,b]\to M$, denote $\Tilde{\gamma} = \gamma \circ h$, and let $\Tilde{s}$ be a parameter on the reparametrized curve such that $\Tilde{\gamma}(\Tilde{s}) = \gamma(s)$. Then we require that
    \begin{align}
        \widetilde{\na}_{\dot{\tilde{\gamma}}} \dot{\tilde{\gamma}} = \widetilde{\na}_{\tilde{\dot{\gamma}}} (h' \dot{\gamma}) = h'' \dot{\gamma} + (h')^2 \widetilde{\na}_{\dot{\gamma}} \dot{\gamma} = h'' \dot{\gamma} + 2 (h')^2 c\pminus{1}(dc \cdot \dot{\gamma}) \dot{\gamma} = 0.
    \end{align}
    Solving the first order ODE for $h'$, we get
    \begin{equation}\label{eq_hprime}
        h'(\tilde{s}) = \left( \frac{c(\tilde{\gamma}(h\pminus{1}(a)))}{c(\tilde{\gamma}(\tilde{s}))} \right)^2.
    \end{equation}
    Thus, there exists a reparametrization $h$ such that $\widetilde{\na}_{\dot{\tilde{\gamma}}} \dot{\tilde{\gamma}} = 0$. Moreover, the causal character of a curve is conformally invariant so we conclude that $\gamma$ is a null pregeodesic of $\tilde{g}$. The argument is similar for null geodesics of $\tilde{g}$, with $c$ replaced by $c\pminus{1}$.
\end{proof}

\begin{proposition}
    Let $\tilde{g}$ be a Lorentzian metric on $\mathcal M$ and suppose that the non-Abelian light ray transform is injective on $(\mathcal M,\tilde{g})$. Then the non-Abelian light ray transform is injective in the whole conformal class of $(\mathcal M,\tilde{g})$.
\end{proposition}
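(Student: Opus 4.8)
The plan is to prove the statement geodesic-by-geodesic, using Lemma~\ref{conformal-null} as the sole geometric input. Any other representative of the conformal class can be written as $g$ with $\tilde g = c^2 g$ for some positive $c\in\Cinf(\M)$, so it suffices to show that injectivity of $\LR_{\ola}$ on $(\M,\tilde g)$ forces injectivity of $\LR_{\ola}$ on $(\M,g)$. By Lemma~\ref{conformal-null} every null geodesic $\gamma$ of $g$ has the same image as a null geodesic $\tilde\gamma=\gamma\circ h$ of $\tilde g$, where $h$ is the reparametrization of \eqref{eq_hprime}, and this correspondence is a bijection on the level of maximal null geodesics of the two metrics. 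The connection $\ola$ lives on the bundle $E=\olm\times\C^N$ and is untouched by the conformal change, so the only quantities that can change along this correspondence are the parallel transport map and the integration measure.

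First I would record that parallel transport depends only on the image curve and not on its parametrization. Indeed, since $\ola$ is a one-form, $\ola(\dot{\tilde\gamma}) = h'\,\ola(\dot\gamma)$, and a direct check shows that $\tilde u(\tilde s)=u(h(\tilde s))$ solves the transport ODE along $\tilde\gamma$ whenever $u$ solves it along $\gamma$; hence $P_{\ola,\tilde\gamma}(\tilde s)=P_{\ola,\gamma}(h(\tilde s))$ once both are normalized to the identity at the common initial point. Next I would change variables $\tilde s\mapsto s=h(\tilde s)$ in the integral defining $\LR_{\ola}$ on $(\M,\tilde g)$. Reading off \eqref{eq_hprime}, which gives $h'(\tilde s)=c(\gamma(0))^2/c(\gamma(s))^2$ and hence $d\tilde s/ds=c(\gamma(s))^2/c(\gamma(0))^2$, the transport factors cancel and one obtains the clean relation
\[
\LR_{\ola}^{\tilde g} Q(\tilde\gamma) \;=\; \frac{1}{c(\gamma(0))^2}\,\LR_{\ola}^{g}\!\big(c^2 Q\big)(\gamma),
\]
where the superscripts indicate the metric used to define the transform. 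Thus, up to the harmless nonzero constant $c(\gamma(0))^{-2}$, the $\tilde g$-transform of $Q$ is the $g$-transform of the rescaled potential $c^2 Q$.

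The injectivity transfer is then immediate. Suppose $\LR_{\ola}^{g} Q'=0$ for a potential $Q'$ with $\supp Q'\subset\intr{\olm}$, and set $Q=c^{-2}Q'$, which is again smooth and compactly supported in $\intr{\olm}$ since $c>0$. Then $c^2 Q=Q'$, so the displayed identity gives $\LR_{\ola}^{\tilde g} Q=0$ for every null geodesic of $\tilde g$; injectivity of $\LR_{\ola}$ on $(\M,\tilde g)$ forces $Q=0$, whence $Q'=c^2Q=0$. This proves injectivity of $\LR_{\ola}$ on $(\M,g)$, and since $g$ was an arbitrary representative, on the whole conformal class. I do not expect a serious obstacle here: the argument is a bookkeeping exercise once Lemma~\ref{conformal-null} is in hand, and the only points demanding care are the reparametrization-invariance of $P_{\ola,\gamma}$ (so that the transport factors genuinely cancel), the correct reading of the Jacobian from \eqref{eq_hprime}, and the observation that multiplication by $c^{\pm2}$ preserves both smoothness and the interior support condition.
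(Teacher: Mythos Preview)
Your proposal is correct and follows essentially the same route as the paper: both invoke Lemma~\ref{conformal-null}, verify reparametrization-invariance of $P_{\ola,\gamma}$, perform the change of variables via \eqref{eq_hprime}, and conclude that the transforms differ by conjugating the potential with $c^{\pm 2}$. The only cosmetic difference is that you write the identity as $\LR_{\ola}^{\tilde g}Q=c(\gamma(0))^{-2}\,\LR_{\ola}^{g}(c^{2}Q)$ and introduce $Q'=c^{2}Q$, whereas the paper writes the equivalent relation $\LR_{\ola}^{g}Q=c(\gamma(0))^{2}\,\LR_{\ola}^{\tilde g}(Q/c^{2})$ and concludes directly.
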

\begin{proof}
    Let $g$ be conformally equivalent to $\Tilde{g}$ and let $c \in \Cinf(M)$ be a function such that $\Tilde{g} = c^2 g$. 
    Denote the light ray transforms of $g$ and $\tilde{g}$ by $\LR_{\ola}$ and $\tilde{\LR}_{\ola}$, respectively, and suppose that $\tilde{\LR}_m$ is injective. Let $Q \in \Cinf(\mathcal M, \C^{N\times N})$ be such that $\LR _{\ola} Q = 0$, that is,
    \begin{equation}
        \int_0^l P_{\ola,\gamma} (s)\pminus 1 Q(\gamma(s)) P_{\ola, \gamma}(s) ds = 0,
    \end{equation}
    for all maximal null geodesics $\gamma$.
    Let now $\tilde \gamma = \gamma \circ h$ as in the previous lemma, and $P_{\ola, \tilde \gamma} = P_{\ola, \gamma}\circ h$. Observe then that
    \begin{equation}
        P_{\ola, \tilde \gamma}' + (\ola \cdot \dot {\tilde \gamma})P_{\ola, \tilde \gamma} = h' P_{\ola, \gamma}' + h' (\ola \cdot \gamma) P_{\ola, \gamma} = 0, \quad P_{\ola, \tilde \gamma}(h \pminus 1 (0)) = P_{\ola, \gamma}(0) = \text{id}. 
    \end{equation}
    Uniqueness for ODEs then implies that $P_{\ola, \tilde \gamma} = P_{\ola, \gamma}$. Therefore, we get
    \begin{align}
        \int_0^l P_{\ola,\gamma} (s)\pminus 1 Q(\gamma(s)) P_{\ola, \gamma}(s) ds &= \int_{h\pminus 1 (0)}^{h\pminus 1 (l)} P_{\ola, \tilde \gamma}(\tilde s)\pminus 1 Q(\tilde \gamma(\tilde s)) P_{\ola, \tilde \gamma}(\tilde s) h'(\tilde s) d\tilde s \\
        &= c(\gamma (0))^2 \int_{h\pminus 1 (0)}^{h\pminus 1 (l)} P_{\ola, \tilde \gamma}(\tilde s)\pminus 1 Q(\tilde \gamma(\tilde s)) P_{\ola, \tilde \gamma}(\tilde s) c(\tilde \gamma (\tilde s))\pminus 2 d\tilde s,
    \end{align}
    where we used \eqref{eq_hprime} from the previous lemma. From the injectivity of $\tilde \LR_{\ola}$ it then follows that $Q / c^2$ vanishes, which in turn implies that $Q = 0$.
\end{proof}

\section{Further details on stationary geometry}\label{app-stat}

\begin{lemma}
    Suppose $\mathcal{M}$ is a globally hyperbolic manifold that contains a complete timelike Killing vector field $X$. Then $\mathcal{M}$ is isometric to a standard stationary manifold.
\end{lemma}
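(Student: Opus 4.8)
The plan is to build the splitting directly from the flow of $X$ together with the smooth spacelike Cauchy surface supplied by Theorem~\ref{prop_of_globhyper}. First I would fix such a Cauchy surface $\Sigma\subset\M$ and let $\psi_s$ denote the flow of $X$; completeness of $X$ guarantees that $\psi_s$ is defined for all $s\in\R$, and since $X$ is Killing each $\psi_s$ is an isometry. Because $X$ is timelike everywhere while every nonzero tangent vector of the spacelike hypersurface $\Sigma$ is spacelike, $X$ is transversal to $\Sigma$; the same holds for each translate $\psi_s(\Sigma)$, which is again spacelike because $\psi_s$ preserves causal type.

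Next I would define $F:\R\times\Sigma\to\M$ by $F(s,x)=\psi_s(x)$ and show it is a diffeomorphism. The differential $dF_{(s,x)}$ sends $\partial_s$ to $X(\psi_s(x))$ and maps $T_x\Sigma$ isomorphically onto the spacelike hyperplane $d\psi_s(T_x\Sigma)=T_{\psi_s(x)}(\psi_s\Sigma)$; transversality of the timelike vector $X$ to this hyperplane shows $dF_{(s,x)}$ is a linear isomorphism, so $F$ is a local diffeomorphism. Bijectivity is exactly where global hyperbolicity enters: the integral curve of $X$ through any point $p\in\M$ is, by completeness, an inextendible timelike curve, and the defining property of the Cauchy surface says that this curve meets $\Sigma$ in exactly one point. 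This produces a unique pair $(s,x)$ with $p=\psi_s(x)$, giving injectivity and surjectivity at once.

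With $F$ a diffeomorphism I would pull back the metric and read off the normal form. In the coordinates $(t=s,x)$ one has $F_*\partial_t=X$, so the Killing equation $\mathcal{L}_X\g=0$ becomes $\partial_t(F^*\g)_{\mu\nu}=0$, i.e. all components of $F^*\g$ are independent of $t$. Setting $c(x)^2:=-\g(X,X)$, which is positive because $X$ is timelike, the $tt$-component equals $-c^2$, and since the restriction of $\g$ to $T\Sigma$ is positive definite the purely spatial block is a genuine Riemannian metric; factoring out $c^2$ then puts the metric in the form \eqref{standard-stationary} with $\tilde\omega$ and $\tilde g$ time independent and $X=\partial_t$.

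The hard part will be the bijectivity of $F$, and more precisely the verification that an integral curve of $X$ is inextendible so that the Cauchy-surface property can be invoked. This is the one place where completeness of the Killing field is indispensable, since an incomplete timelike curve could fail to meet $\Sigma$ or could meet it in a way not controlled by the Cauchy condition. Everything else reduces to routine bookkeeping of transversality and of the vanishing Lie derivative along $X$.
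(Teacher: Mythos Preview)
Your proposal is correct and follows essentially the same route as the paper: define $F(s,x)=\psi_s(x)$ via the Killing flow restricted to a Cauchy surface, show it is a local diffeomorphism by transversality of the timelike $X$ to the spacelike $\Sigma$, obtain bijectivity from the Cauchy property applied to the complete (hence inextendible) integral curves of $X$, and read off time-independence of the pulled-back metric from $\mathcal{L}_X\g=0$. The paper phrases the local-diffeomorphism step via the flowout theorem and defers bijectivity to \cite[Prop.~14.31]{ONeill}, whereas you argue both points directly, but the content is the same.
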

\begin{proof}
Let $\tilde{\psi}: \mathcal{D} \to \mathcal{M}$ be the flow of $X$ where $\mathcal{D} = \R \times \mathcal{M}$ is the maximal flow domain. 
Then set $\mathcal{D}_\Sigma = \R \times \Sigma \subset \mathcal{D}$ and observe that $\mathcal{D}_\Sigma$ is a smooth hypersurface in $\mathcal{D}$, as $\Sigma$ is a smooth hypersurface in $\mathcal{M}$. Then the restriction $\psi = \tilde{\psi}\rvert_{\mathcal{D}_\Sigma}$ is smooth. Since $X$ is timelike it is nowhere tangent to the Cauchy hypersurface $\Sigma$. From the flowout theorem \cite[Thm. 9.20]{lee2002} it then follows that $\psi$ is an immersion between manifolds of the same dimension, and thus a local diffeomorphism. As in the proof of \cite[Prop. 14.31]{ONeill}, we see that $\psi$ is a bijection, and hence a full diffeomorphism. Then we observe that the Lorentzian manifold $(\R \times \Sigma, \psi^* \overline{g})$ is standard stationary. Indeed, given the coodinates $z=(t,x)$ on $\R \times \Sigma$, the coordinates $z' = z \circ \psi^{-1}$ on $\mathcal{M}$, and the associated coordinate basis vectors $\pt_i, \pt_i'$ with $i=0,...,n$, we calculate
\begin{align}
    \frac{\pt}{\pt t} \psi^* \overline{g}(\pt_i , \pt_j) &= \psi^* \overline{g}(\na_{\pt_t}^* \pt_i, \pt_j) + \psi^* \overline{g}(\pt_i, \na_{\pt_t}^*\pt_j) \nn \\
    &= \psi^* \overline{g}(\na^*_{\pt_i}\pt_t, \pt_j) + \psi^* \overline{g}(\pt_i, \na^*_{\pt_j}\pt_t) \nn \\
    &= \overline{g}(\na_{\psi_* \pt_i} \psi_* \pt_t, \psi_* \pt_j) + \overline{g}(\psi_* \pt_i, \na_{\psi_* \pt_j} \psi_* \pt_t),
\end{align}
where $\na$ is the Levi-Civita connection on $\mathcal{M}$, and $\na^* = \psi^*\na$ is the pullback connection, which coincides with the Levi-Civita connection of $\psi^* \overline{g}$. Then we note that $\psi_* \pt_t$ is in fact identical to the Killing field $X$, so that
\begin{equation}
    \frac{\pt}{\pt t} \psi^* \overline{g}(\pt_i , \pt_j) = \overline{g}(\na_{\pt_i'} X, \pt_j') + \overline{g}(\pt_i', \na_{\pt_j'} X) = 0,
\end{equation}
since $X$ is Killing. Consequently, the components of $\psi^*\overline{g}$ are independent of the time coordinate $t$, and the metric on $\R \times \Sigma$ can be written as in \eqref{standard-stationary}.
\end{proof}
The following lemma gives an \emph{intrinsic} sufficient condition for the complement of $\pi(K)$, defined in Section \ref{sec-stat}, to be nonempty and thus having an open neighborhood with compact closure.
\begin{lemma}\label{exit-lemma}
    Let $K\subset \mathcal{M}$ be compact and let $\tilde{\psi}$ be the flow of $X$. Then any minimizing future-inextendible null geodesic that starts in $\tilde{\psi}(\R \times K)$ eventually exits $\tilde{\psi}(\R \times K)$ never to return.
\end{lemma}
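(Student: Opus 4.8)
The plan is to argue by contradiction, turning the two hypotheses into competing geometric statements: \emph{minimizing} makes $\gamma$ achronal (no two of its points are chronologically related), while being \emph{trapped in a compact spatial region} forces the projected magnetic geodesic to recur, and recurrence in a stationary spacetime produces chronological relations. First I would pass to the standard stationary picture. Since $X$ is complete, $\tilde{\psi}$ is defined on all of $\R\times\M$, and in the coordinates of \eqref{gstat2} with $X=\pt_t$ one has $\tilde{\psi}(\R\times K)=\R\times\pi(K)$ with $\pi(K)\subset\Sigma$ compact; here the causal structure is unchanged by the conformal rescaling leading to \eqref{gstat2}. Write the future-inextendible null geodesic as $\gamma(s)=(t(s),x(s))$, so $x(s)$ is a unit-speed magnetic geodesic and $\dot{t}(s)=\norm{\dot{x}(s)}_g-\omega_{x(s)}(\dot{x}(s))=1-\omega_{x(s)}(\dot{x}(s))$. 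Because every slice $\{t=\mathrm{const}\}$ is spacelike, the strict inequality $\abs{\omega_x(v)}<\norm{v}_g$ holds everywhere, so on a fixed compact neighborhood of $\pi(K)$ there is a constant $C_0<1$ with $\abs{\omega_x(v)}\le C_0\norm{v}_g$; in particular $\dot{t}\ge 1-C_0>0$ whenever $x(s)$ stays in $\pi(K)$. Recall finally that $\gamma$ minimizing means $d_{\g}\equiv 0$ on $\gamma\times\gamma$, i.e.\ $\gamma(s_j)\not\ll\gamma(s_k)$ for all $j,k$.

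Suppose the conclusion fails, so that $x(s)\in\pi(K)$ for a sequence $s_k$ increasing to the future endpoint $S_{\max}$ of the maximal domain of $\gamma$. If $S_{\max}<\infty$, the escape lemma applied to the smooth magnetic geodesic vector field on $S\Sigma$ would force $(x(s),\dot{x}(s))$ to leave every compact subset of $S\Sigma$ as $s\to S_{\max}$; this contradicts $(x(s_k),\dot{x}(s_k))\in S\Sigma\rvert_{\pi(K)}$, a compact set, for $s_k\to S_{\max}$. Hence $S_{\max}=\infty$, and the slope bound $\dot{t}\ge 1-C_0$ (valid along the visits to $\pi(K)$, which accumulate at $+\infty$) yields $t(s_k)\to+\infty$. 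By compactness of $\pi(K)$ I extract a subsequence, not relabelled, with $x(s_k)\to x_*\in\pi(K)$.

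The heart of the argument is converting this spatial recurrence into a chronological relation. Fix $\epsilon>0$ small; for $j<k$ large, both $x(s_j)$ and $x(s_k)$ lie within $\epsilon$ of $x_*$, so there is a path $\alpha:[0,1]\to\Sigma$ from $x(s_j)$ to $x(s_k)$ of $g$-length $L\le 2\epsilon$, parametrized at constant $g$-speed and contained in the compact neighborhood on which $C_0$ works. Lifting it by $\beta(u)=(t(s_j)+u\,\Delta t,\alpha(u))$ with $\Delta t=t(s_k)-t(s_j)$ and computing with $\g=-(dt+\omega)^2+g$ gives $\g(\dot\beta,\dot\beta)=-(\Delta t+\omega(\dot\alpha))^2+\norm{\dot\alpha}_g^2$, which is negative, with $\dot\beta$ future-directed, as soon as $\Delta t>(1+C_0)L$. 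Since $\Delta t\to+\infty$ while $L\le 2\epsilon$ is fixed, this holds for $k$ large, so $\beta$ is a future-directed timelike curve and $\gamma(s_j)\ll\gamma(s_k)$, i.e.\ $d_{\g}(\gamma(s_j),\gamma(s_k))>0$. This contradicts achronality, proving that $x(s)\notin\pi(K)$ for all sufficiently large $s$, which is the claim. The main obstacle is precisely this recurrence-to-chronology step: producing an honest future-directed timelike connecting curve from the stationary metric between nearly spatially coincident points at widely separated times, together with the escape-lemma bookkeeping that rules out a finite affine endpoint. Everything else (the splitting and the lower bound on $\dot t$) is routine given the results already established.
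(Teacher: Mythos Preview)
Your approach---turning \emph{minimizing} into achronality and \emph{trapped} into spatial recurrence, then manufacturing a timelike curve between two returns---is sound in outline and shares its core mechanism with the paper's proof: both build a timelike connector inside the cylinder to contradict achronality. But there is one genuine gap.

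The step ``the slope bound $\dot t\ge 1-C_0$ (valid along the visits to $\pi(K)$, which accumulate at $+\infty$) yields $t(s_k)\to+\infty$'' does not follow. The bound $\dot t\ge 1-C_0$ holds only while $x(s)$ sits in your fixed compact neighborhood; between the instants $s_k$ the projection $x(s)$ may wander far away, and out there the only information you have is $\dot t>0$. A pointwise lower bound at the discrete times $s_k$ (or on short intervals around them) cannot by itself force $t(s_k)\to\infty$. The repair is easy but different from what you wrote: each slice $\{t=c\}$ is the image of the Cauchy hypersurface $\Sigma$ under the Killing isometry $\tilde\psi_c$, hence is itself Cauchy; since you already checked $\dot t>0$ along every future-directed causal curve, $t$ is a Cauchy time function, and a future-inextendible causal curve must cross every level set, so $t(s)\to+\infty$. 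With that in place the rest of your argument goes through.

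The paper's proof sidesteps this detour entirely by working directly rather than by contradiction. One fixes the starting point $p=(t_0,x_0)$, sets $L=\sup_{x,y\in\pi(K)}d_g(x,y)+\epsilon$ and $T>L(\sup_{x\in\overline B}\norm{\omega_x}_g+1)$ (the supremum taken over a closed ball $\overline B$ large enough to contain all near-minimizing connectors between points of $\pi(K)$), and then for \emph{every} $q=(t_1,x_1)\in\R\times\pi(K)$ with $t_1-t_0\ge T$ one lifts a short spatial path $x_0\to x_1$ to a timelike curve $p\to q$, exactly as in your final step. This places the entire half-cylinder $\{t\ge t_0+T\}\times\pi(K)$ inside $I^+(p)$, so the minimizing $\gamma\subset J^+(p)\setminus I^+(p)$ can never enter it. No escape lemma, no subsequence extraction, no limit point $x_*$: the uniform diameter bound on the compact set $\pi(K)$ does the work that your recurrence argument does.
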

\begin{proof}
    We define a smooth map $\rho: \mathcal{M} \to \Sigma$ as $\rho = \pi_\Sigma \circ \psi^{-1}$ where $\pi_\Sigma: \R \times \Sigma \to \Sigma$ is the natural projection. Let $\gamma$ be such a null geodesic with a starting point $p=(t_0,x_0) \in \tilde{\psi}(\R \times K)$. We have $\tilde{\psi}(\R \times K) = \psi(\R \times \rho(K))$. Now fix $\epsilon > 0$ small and let $d_g(\cdot,\cdot)$ be the Riemannian distance on $\Sigma$ associated with the metric $g$. We consider a closed ball $\overline B$ with large enough radius such that $\rho(K) \subset \overline B$ and any piecewise smooth unit speed curve $\beta: [0,l] \to \Sigma$ with endpoints in $\rho(K)$ and of length $L(\beta) = l < d_g(\beta(0),\beta(l)) + \epsilon$ is contained in $\overline B$.
    
    Let then $L = \sup_{x,y \in \rho(K)} d_g(x,y) + \epsilon$ and $T > L (\sup_{x \in \overline B} \norm{\omega_x}_g +1)$. Let $q=(t_1,x_1) \in \R \times \rho(K)$ be such that $t_1 - t_0 \geq T$. Then there exists a piecewise smooth unit speed path $\beta: [0,l] \to \overline B$ from $x_0$ to $x_1$ with length $ l< d_g(x_0,x_1) + \epsilon$. Define now a piecewise smooth path $\alpha: [0,l] \to \mathcal{M}$ by
    \[
    \alpha(s) = (t(s),\beta(s)), \quad t(s) = (s t_1 + (l- s)t_0)/l.
    \]
    Now observe that
    \[
    \dot{t} = (t_1 - t_0)/l \geq T/l > L( \sup_{x \in \overline B}\norm{\omega_x}_g + 1)/l \geq \sup_{x \in \overline B}\norm{\omega_x}_g + 1
    \]
    so that the tangent vector of the path satisfies at any point
    \begin{align}
        c^{-2}\overline{g}(\dot{\alpha}(s),\dot{\alpha}(s)) &= - (\dot{t}(s) + \omega(\dot{\beta}(s)))^2 + \norm{\dot{\beta}(s)}_g^2 \nn \\
        &\leq - (\dot{t}(s) - \norm{\omega_{\beta(s)}}_g)^2 + 1 \nn \\
        &< - (\sup_{x \in \overline B}\norm{\omega_x}_g + 1 - \norm{\omega_\beta}_g)^2 + 1 \nn \\
        &\leq -1 + 1 = 0.
    \end{align}
    Thus, $\alpha$ is a piecewise smooth timelike curve from $p$ to $q$. Consequently, $q \in I^+(p)$ for all $q =(t,x) \in \R \times \rho(K)$ with $t \geq t_0 + T$. But then $\gamma$ has to exit $\psi(\R \times \rho(K))$ before the moment of time $t_0 + T$ and never return since it is minimizing, i.e. contained in $J^+(p)\setminus I^+(p).$
\end{proof}

\bibliographystyle{siam}
\bibliography{master}

\end{document}